\def\struckint{\mathop{%
\def\mathpalette##1##2{\mathchoice{##1\displaystyle##2}%
  {##1\textstyle##2}{##1\scriptstyle##2}{##1\scriptscriptstyle##2}}%
\mathpalette
{\vbox\bgroup\baselineskip0pt\lineskiplimit-1000pt\lineskip-1000pt
\halign\bgroup\hfill$}
{##$\hfill\cr{\intop}\cr\diagup\cr\egroup\egroup}%
}\limits}
\newtheorem{theorem}{Theorem}[section]
\newtheorem{corollary}[theorem]{Corollary}
\newtheorem{conjecture}[theorem]{Conjecture}
\newtheorem{fact}[theorem]{Fact}
\theoremstyle{remark}
\newtheorem{remark}[theorem]{Remark}
\newtheorem{question}[theorem]{Question}
\newcommand{\integers}{\mathbb{Z}}
\newcommand{\reals}{\mathbb{R}}
\DeclareMathOperator{\mcg}{Mod}
\DeclareMathOperator{\Sp}{Sp}
\DeclareMathOperator{\SL}{SL}
\DeclareMathOperator{\coker}{coker}
\DeclareMathOperator{\Volume}{Volume}
\DeclareMathOperator{\area}{area}
\DeclareMathOperator{\height}{height}
\title{Statistics of 3-manifolds occasionally fibering over the circle}
\author{Igor Rivin}
\address{Mathematics Department, Temple University, Philadelphia, PA}
\curraddr{Mathematics Department, Brown University and ICERM, Providence, RI}
\email{rivin@temple.edu}
\subjclass[2000]{57N10;20F34;57M50;20F34;60B15}
\begin{document}

\begin{abstract}
We study random elements of subgroups (and cosets) of the mapping class group of a closed hyperbolic surface, in part through the properties of their mapping tori. In particular, we study the distribution of the homology of the mapping torus (with rational, integer, and finite field coefficients, the hyperbolic volume (whenever the manifold is hyperbolic), the dilatation of the monodromy, the injectivity radius, and the bottom eigenvalue of the Laplacian on these mapping tori. We also study mapping tori of punctured surface bundles, and various invariants of their Dehn fillings. We also study corresponding questions  in the Dunfield-Thurston (\cite{dunfield2006finite}) model of random Heegard splittings of fixed genus, and give a number of new and improved results in that setting.
\end{abstract}
\thanks{The author would like to thank Jeff Brock, Nathan Dunfield, Mark Bell, Saul Schleimer, Anders Karlsson, Alex Eskin, Juan Souto, Ian Biringer, Kasra Rafi, Stefan Friedl, and David Futer for useful conversations, Richard Stanley for the nice proof of Theorem \ref{snfthm} and ICERM and Brown University for their generous support and hospitality during the writing of this paper.The very extensive computational experiments would not have been possible without generous support. I also would like to thank Emmanuel Kowalski for helpful comments on the first version of this preprint. The computations discussed were performed using Mathematica, SnapPy, and Twister \cite{Twister}. A previous version of this paper (containing most of the results for fibered manifolds) was circulated as the arXiv.org preprint \cite{rivin2014statistics}}
\maketitle
\tableofcontents
\section{Introduction}

The modern study of random $3$-manifolds, and their relationship to random elements of the mapping class group was initiated by N.~Dunfield and W.~Thurston in their foundational paper \cite{dunfield2006finite} (they studied ``random Heegaard splittings''). Their work spurred on considerable progress in the field, which lies at the crossroads of three-manifold topology, surface topology, dynamics, ergodic theory, number theory, algebraic groups, and the study of Kleinian groups. In addition, while Dunfield and Thurston were motivated by the Virtual Haken Conjecture, in the interim Agol, Groves and Manning \cite{agol2013virtual} extended the work of Dani Wise and collaborators (see, e.g., \cite{hagwise}) to resolve the (much stronger) Virtual Fibering Conjecture, which makes $3$-manifolds fibering over $S^1$ even more interesting than in the past.  With this in mind, in this paper we undertake the study of random manifolds fibering over $S^1,$ and discover that much can be said, and much remains mysterious.

Let $S$ be a surface of genus $g,$ and let $\phi:S\righttoleftarrow$ be an automorphism of $S.$ For such a map $\phi,$ we can define the mapping torus $M_\phi,$ which will be a three-dimensional manifold fibering over the circle $S^1.$

It is clear that the properties of $\phi$ are reflected in the topology of $M_\phi,$ and, consequently, an understanding of the properties of ``random'' surface automorphisms leads to an understanding of ``random'' manifolds fibering over the circle. The word ``random'' is in quotes because there are a number of possible ways we can define what it means -- see the author's survey \cite{rivinMSRI}. In this note we will define a random element of the mapping class group (or a subgroup thereof) as a "long" random product of generators (the generating set is assumed to be symmetric), though all of our results hold with a somewhat more general definition using symmetric recognizing automata for permitted words.

Here is a list of some of our observations, we will denote the set of random products of length $n$ by $W_n.$ In fact, it is not necessary we take random products of generators of the whole mapping class group, so the list below is only to give the flavor of the results we find. In all statements, $\phi$ is a uniformly chosen random element of $W_n.$
\begin{itemize}
\item The map $\phi$ is pseudo-Anosov, with probability approaching $1$ exponentially fast in $n.$
\item The mapping torus $M_\phi$ is a hyperbolic manifold with probability approaching $1$ exponentially fast in $n.$
\item The hyperbolic volume of the mapping torus $M_\phi$ grows linearly in $n.$
\item The first Betti number of the mapping torus $M_\phi$ of $\phi$ is $1,$ with probability approaching $1$ exponentially fast in $n.$
\item The expected value of the logarithm of the order of the torsion subgroup of $H_1(M_\phi, \integers)$ grows linearly in $n.$ Furthermore, this logarithm is asymptotically normally distributed.
\item The probability that the rank of $H_1(M_\phi, \integers/p \integers)$ for a prime $p$ is greater than $k$ asymptotically approaches $c_k/p^k,$ where $c_k$ are universal constants. 
\item The rank of the fundamental group of $M_\phi$ is generically $2g+1.$
\item The injectivity radius of $M_\phi$ goes to zero with $n.$ 
\item The smallest positive eigenvalue $\lambda_1(M_\phi)$ goes to zero between linearly and quadratically as a function of $n.$
\end{itemize}

We also study manifolds which fiber over the circle with fiber a cusped surface. We study the generic behavior of the cusp of the resulting (generically hyperbolic) manifold, and use it to show that for a generic such manifold \emph{all} the Dehn fillings are hyperbolic. In addition, by varying the model somewhat, we get large families of \emph{integer homology spheres} with control on the Casson invariants.
We also make the following conjectures (coming from computational experiments):
\begin{itemize}
\item The expected hyperbolic volume of a surface bundle grows linearly in the length of the monodromy as a word in the mapping class group.
\item The expected growth rates of both volume and logarithm of the torsion subgroup of homology of a mapping torus approach \emph{finite} limits as the genus of the fiber becomes large.
\end{itemize}

We also give a number of results on the Dunfield-Thurston model of random Heegaard splittings.
\begin{itemize}
\item The cardinality of the first homology group of a random Heegard splitting grows exponentially with the length of the gluing map, and there are precise limit theorems for its size (this is true if the gluing map is chosen from a nonelementary subgroup of the mapping class group, whose Torelli image is Zariski dense in $\Sp(2g, \integers).$
\item Under the same assumptions as above, the \emph{Kneser-Matve'ev complexity} of the manifold grows linearly.
\item Under the same assumptions, the probability that the manifold has a Galois cover with Galois group $Q$ (which is assumed to be solvable) and positive first Betti number goes to zero with the length of the gluing map.
\item The volume of a random Heegard splitting goes to infinity linearly  with the length of the defining word (resolving a conjecture of Dunfield-Thurston), while
\item The Cheeger constant, the injectivity radius, and the bottom eigenvalue of the Laplacian go to zero with the length of the word (this is true for any nonelementary subgroup).
\end{itemize}
We also conjecture that the probability that the random manifold is Haken goes to zero with the length of the gluing map.

The plan of the rest of the paper is as follows:

In Section \ref{background} we will recall some background.

In Section \ref{matprods} we study the homology of random mapping tori.

In Section \ref{volsec} we study the hyperbolic volumes and other geometric invariants of random mapping tori.

In Section \ref{prehom} we find many hyperbolic manifolds fibering over the circle with bounded volume and prescribed structure of the first integral homology group.

In Section \ref{randsubgp} we discuss the structure of a random subgroup of the mapping class group.

In Section \ref{cusped} we study fibered manifolds with punctured fiber, and their Dehn fillings.

In Section \ref{experiments} we will describe some experimental results and conjectures.

In Section \ref{randomh} we address random Heegaard splittings \textit{a la} Dunfield-Thurston.

\section{Some background}
\label{background}

In this section we will recall some (fairly diverse) background facts.
\subsection{Homology}
\label{homosec}
The following result is shown (as Example 2.48) in A. Hatcher's Algebraic Topology \cite{hatcher2002algebraic}:
\begin{fact}[\cite{hatcher2002algebraic}[Example 2.48]
Let $X$ be a space, $\phi: M \righttoleftarrow$ be a self-map of $X,$ and let $M_\phi$ be the mapping torus of $\phi.$ Then, we have the following exact sequence:
\[
\begin{CD}
\dots \rightarrow H_n(X) @>\phi_* - \iota_*>> H_n(X) @>\iota_*>> H_n(M_\phi) @>>> H_{n-1}(X) @>\phi_* - \iota_*>> H_{n-1}(X) \rightarrow \dots,
\end{CD}
\]
where $\iota$ is the identity map, and the lower $*$ indicates the induced map on homology.
\end{fact}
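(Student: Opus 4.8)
The plan is to derive this exact sequence --- the \emph{Wang sequence} of the mapping torus --- from the Mayer--Vietoris sequence of a natural two-set open cover. Recall that $M_\phi$ is the quotient of $X\times[0,1]$ by $(x,1)\sim(\phi(x),0)$; write $q$ for the quotient map and $X_s=q(X\times\{s\})$ for the slice at height $s$, so each $X_s$ with $0<s<1$ is a copy of $X$ and $X_0=X_1$ is the ``seam''. First I would take
\[
A=q\bigl(X\times([0,\tfrac13)\cup(\tfrac23,1])\bigr),\qquad B=q\bigl(X\times(\tfrac14,\tfrac34)\bigr),
\]
an open neighbourhood of the seam and an open neighbourhood of the middle slice. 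One checks directly that $A$ and $B$ are open, $A\cup B=M_\phi$, $A$ deformation retracts onto $X_0$, $B$ deformation retracts onto $X_{1/2}$ (both homeomorphic to $X$), and $A\cap B=q\bigl(X\times((\tfrac14,\tfrac13)\cup(\tfrac23,\tfrac34))\bigr)$ consists of two pieces --- a lower one at heights in $(\tfrac14,\tfrac13)$ and an upper one at heights in $(\tfrac23,\tfrac34)$ --- each deformation retracting onto a copy of $X$. The only point needing care is that these deformation retractions descend to the quotient, which they do because the gluing $(x,1)\sim(\phi(x),0)$ only identifies points that stay fixed throughout.

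The second step is to feed this into Mayer--Vietoris and express each map as an endomorphism of $H_*(X)$. Identifying $H_n(A\cap B)\cong H_n(X)\oplus H_n(X)$ via the two pieces, the inclusion $A\cap B\hookrightarrow B$ induces the identity on both pieces (inside the untwisted slab $B$ both retract to $X_{1/2}$ in the obvious way), while the inclusion $A\cap B\hookrightarrow A$ induces the identity on the lower piece (which retracts in $A$ toward height $0$) but induces $\phi_*$ on the upper piece (which retracts in $A$ toward height $1$, where $[x,1]=[\phi(x),0]$). Hence the Mayer--Vietoris map $H_n(A\cap B)\to H_n(A)\oplus H_n(B)$ has the form $(a,b)\mapsto(a+\phi_*b,\ a+b)$ up to the usual sign.

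The third step is purely algebraic. Changing the target coordinates $(p,q)\mapsto(p-q,q)$ rewrites this map as $(a,b)\mapsto\bigl((\phi_*-\iota_*)b,\ a+b\bigr)$, which has image $\operatorname{im}(\phi_*-\iota_*)\oplus H_n(X)$ and kernel isomorphic, via $b$, to $\ker(\phi_*-\iota_*)$. Exactness of the Mayer--Vietoris sequence then gives, for each $n$, a short exact sequence
\[
0\longrightarrow\coker\bigl(\phi_*-\iota_*\colon H_n(X)\to H_n(X)\bigr)\longrightarrow H_n(M_\phi)\longrightarrow\ker\bigl(\phi_*-\iota_*\colon H_{n-1}(X)\to H_{n-1}(X)\bigr)\longrightarrow 0,
\]
whose first map is induced by including $X$ as a slice (a fibre of $M_\phi\to S^1$). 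Splicing these short exact sequences over all $n$ is exactly the assertion that $\cdots\to H_n(X)\xrightarrow{\phi_*-\iota_*}H_n(X)\xrightarrow{\iota_*}H_n(M_\phi)\to H_{n-1}(X)\to\cdots$ is exact, which is the statement of the Fact.

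I expect the only genuine obstacle to be the bookkeeping in the second step: checking the deformation retractions are compatible with the quotient and, above all, tracking where the monodromy enters --- that the two pieces of $A\cap B$ behave identically inside $B$ but differently inside $A$, so that $\phi_*$ appears exactly once and with the correct variance, rather than, say, $\phi_*^{-1}$ or $\phi_*$ on both sides. The topological checks of the first step and the homological algebra of the third (including verifying that the spliced connecting maps agree with the Mayer--Vietoris boundary maps) are routine. As an alternative when $\phi$ is a homeomorphism, one could instead run the Serre spectral sequence of the bundle $X\to M_\phi\to S^1$: it is concentrated in the columns $p=0,1$, so it degenerates at $E^2$ and yields the same family of short exact sequences, with the $\pi_1(S^1)$-action on $H_*(X)$ identified as $\phi_*$. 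The Mayer--Vietoris route is preferable here because it applies verbatim to an arbitrary self-map, as the statement demands.
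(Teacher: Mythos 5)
Your proposal is correct: the paper states this as a Fact quoted from Hatcher (Example 2.48) and gives no proof of its own, and your Mayer--Vietoris derivation --- covering $M_\phi$ by a neighbourhood of the seam and an untwisted middle slab, identifying the map on $H_n(A\cap B)\cong H_n(X)\oplus H_n(X)$ as $(a,b)\mapsto(a+\phi_*b,\,a+b)$, and splicing the resulting short exact sequences --- is essentially the standard argument given in that cited source. The one place deserving the care you already flag is where $\phi_*$ enters (the upper piece of $A\cap B$ retracting across the seam), and your treatment of it, valid for an arbitrary continuous self-map, is right.
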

In the case of interest to us, $X$ is closed connected surface, so letting $n=1,$ we have $H_0(X)\simeq \integers,$ and the map $\phi_*$ is identity on zeroth homology, so we have:
\[
\begin{CD}
\dots \rightarrow H_1(X,\integers) @>\phi_* - \iota_*>> H_1(X,\integers) @>\iota_*>> H_1(M_\phi,\integers) @>>> \integers @>>> 0
\end{CD}
\]
Since $\integers$ is a free $\integers$ module, we have the following corollary:
\begin{corollary}
\label{mappinghomo}
If $X$ is a connected topological space, $\phi: X\righttoleftarrow$ a self-map of $X,$ and $M_\phi$ is the mapping torus $M_\phi,$ then
\[
H_1(M_\phi, \integers) \simeq \coker (\phi_* - \iota_* )\bigoplus \integers.
\]
\end{corollary}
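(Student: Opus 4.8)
The plan is to derive Corollary \ref{mappinghomo} directly from the truncated exact sequence displayed just above it, which itself is the $n=1$ instance of the Fact from \cite{hatcher2002algebraic}. So the first step is to take for granted the five-term exact sequence
\[
H_1(X,\integers) \xrightarrow{\phi_* - \iota_*} H_1(X,\integers) \xrightarrow{\iota_*} H_1(M_\phi,\integers) \longrightarrow \integers \longrightarrow 0,
\]
where the penultimate map is the connecting homomorphism to $H_0(X,\integers)\simeq\integers$ and the final $0$ is $\widetilde H_0$ or equivalently the image of $\phi_*-\iota_*$ on $H_0$, which vanishes since $\phi_*=\iota_*=\mathrm{id}$ there (using connectedness of $X$).

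Next I would break this into a short exact sequence. Let $C=\coker(\phi_*-\iota_*)=H_1(X,\integers)/\operatorname{im}(\phi_*-\iota_*)$. Exactness at the second $H_1(X,\integers)$ says $\ker(\iota_*)=\operatorname{im}(\phi_*-\iota_*)$, so $\iota_*$ factors through an injection $\bar\iota_*: C \hookrightarrow H_1(M_\phi,\integers)$. Exactness at $H_1(M_\phi,\integers)$ says the image of $\bar\iota_*$ equals the kernel of the map $H_1(M_\phi,\integers)\to\integers$, and exactness at $\integers$ (surjectivity, from the trailing $0$) says that map is onto. Hence we obtain a short exact sequence
\[
0 \longrightarrow C \longrightarrow H_1(M_\phi,\integers) \longrightarrow \integers \longrightarrow 0.
\]

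The final step is to observe that $\integers$ is a free, hence projective, $\integers$-module, so this sequence splits: choosing any element of $H_1(M_\phi,\integers)$ mapping to a generator $1\in\integers$ determines a section, and therefore $H_1(M_\phi,\integers)\simeq C\oplus\integers = \coker(\phi_*-\iota_*)\oplus\integers$, as claimed. I do not anticipate a genuine obstacle here; the only point requiring any care is making sure the tail of the long exact sequence really does terminate with a surjection onto $\integers$ — that is, checking that $\phi_*-\iota_*$ is zero on $H_0$, which is immediate from $X$ being connected and $\phi$ being a self-map (so that $\phi_*$ and $\iota_*$ both act as the identity on $H_0(X,\integers)\simeq\integers$). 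Everything else is the standard splitting-of-extensions-by-a-free-module argument.
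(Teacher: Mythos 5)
Your proposal is correct and follows exactly the paper's own route: the paper also truncates the Wang exact sequence at $H_0(X)\simeq\integers$ (using connectedness so that $\phi_*-\iota_*$ vanishes there) and splits the resulting extension because $\integers$ is a free $\integers$-module. You have simply spelled out the splitting argument in more detail than the paper's one-line justification.
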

\begin{corollary}
\label{highrank}
The rank of homology $H_1(M_\phi, F)$ with coefficients in a field $F$ equals $1$ plus the dimension of the eigenspace with eigenvalue $1$ of $\phi_*.$
\end{corollary}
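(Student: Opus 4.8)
The plan is to derive Corollary \ref{highrank} directly from Corollary \ref{mappinghomo} by tensoring with the field $F$ and analyzing the rank of the cokernel of $\phi_* - \iota_*$ acting on $H_1(X, F)$. First I would note that since homology commutes with field coefficients in the appropriate sense (or simply by repeating the argument of Corollary \ref{mappinghomo} with $F$-coefficients, using that $F$ is a free $F$-module so the sequence splits), we have $H_1(M_\phi, F) \simeq \coker(\phi_* - \iota_*) \oplus F$, where now $\phi_* - \iota_*$ is an $F$-linear endomorphism of the finite-dimensional vector space $V := H_1(X, F)$.

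The key step is then purely linear algebra: for a linear map $T := \phi_* - \iota_*$ on a finite-dimensional $F$-vector space $V$, the rank-nullity theorem gives $\dim \coker T = \dim V - \rank T = \dim \ker T$. Here $\ker T = \ker(\phi_* - \iota_*)$ is exactly the eigenspace of $\phi_*$ with eigenvalue $1$. Therefore $\dim_F H_1(M_\phi, F) = \dim \ker T + 1 = \dim (\text{eigenspace of } \phi_* \text{ for eigenvalue } 1) + 1$, which is the claim.

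I would also remark on why tensoring works cleanly: the exact sequence of the Fact holds for homology with coefficients in any ring, in particular in the field $F$, so one can run the $n = 1$ specialization verbatim with $H_*(X, F)$ in place of $H_*(X, \integers)$; the term $H_0(X, F) \simeq F$ is free over $F$, so the surjection $H_1(M_\phi, F) \to F$ splits, giving the direct sum decomposition without any Tor terms.

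There is essentially no obstacle here — the statement is a formal consequence of the preceding corollary plus rank-nullity. The only point requiring a word of care is making sure the decomposition $\coker \oplus F$ is legitimate over a field (it is, since every module over a field is free), and that ``the eigenspace with eigenvalue $1$ of $\phi_*$'' is correctly identified with $\ker(\phi_* - \iota_*)$, which is immediate from the definition of eigenspace. So the ``hard part'' is merely bookkeeping, and the proof will be two or three lines.
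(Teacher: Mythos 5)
Your argument is correct and is exactly the route the paper intends: the paper states Corollary \ref{highrank} without a separate proof, leaving it as the immediate consequence of the Wang sequence (run with $F$-coefficients) plus rank--nullity, identifying $\ker(\phi_*-\iota_*)$ with the $1$-eigenspace of $\phi_*$. Nothing is missing in your write-up.
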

\begin{corollary}
If the map $\phi_* - \iota_*$ is nonsingular, then the order of the torsion subgroup of $H_1(M_\phi, \integers)$ equals $|\det (\phi_* - \iota_*)|.$
\end{corollary}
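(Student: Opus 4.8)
The plan is to read off the claim directly from Corollary~\ref{mappinghomo}, which asserts that $H_1(M_\phi,\integers)\simeq\coker(\phi_*-\iota_*)\bigoplus\integers$. Since the $\integers$-summand is free, the torsion subgroup of $H_1(M_\phi,\integers)$ coincides with the torsion subgroup of $\coker(\phi_*-\iota_*)$. So everything reduces to a standard fact about cokernels of integer matrices: if $A\colon\integers^m\to\integers^m$ is a homomorphism with $\det A\neq 0$, then $\coker A$ is a finite group of order $|\det A|$.

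First I would invoke the Smith normal form of the matrix representing $\phi_*-\iota_*$ with respect to a basis of $H_1(X,\integers)\cong\integers^{2g}$: there exist $U,V\in\GL(2g,\integers)$ with $U(\phi_*-\iota_*)V=\diag(d_1,\dots,d_{2g})$, $d_i\in\integers$. Changing basis by $U$ and $V$ does not change the isomorphism type of the cokernel, so $\coker(\phi_*-\iota_*)\cong\bigoplus_{i=1}^{2g}\integers/d_i\integers$. The hypothesis that $\phi_*-\iota_*$ is nonsingular means $\det(\phi_*-\iota_*)=\pm d_1\cdots d_{2g}\neq 0$, hence every $d_i\neq 0$, so each summand $\integers/d_i\integers$ is finite and the whole cokernel is a finite group — in particular it equals its own torsion subgroup. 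Its order is $\prod_{i=1}^{2g}|d_i|=|\det\diag(d_1,\dots,d_{2g})|=|\det(\phi_*-\iota_*)|$, where the last equality uses that $\det U,\det V\in\{\pm1\}$.

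Combining the two observations: the torsion subgroup of $H_1(M_\phi,\integers)$ equals (the whole of) $\coker(\phi_*-\iota_*)$, whose order is $|\det(\phi_*-\iota_*)|$.

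There is essentially no obstacle here; the statement is a routine consequence of Corollary~\ref{mappinghomo} plus Smith normal form. The only point requiring a word of care is the logical step that nonsingularity of $\phi_*-\iota_*$ forces the cokernel to be \emph{all} torsion (no free part), which is exactly where the determinant hypothesis is used, and which is what lets us identify $|\det(\phi_*-\iota_*)|$ with the order of the torsion subgroup rather than merely with an index bounding it.
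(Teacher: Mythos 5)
Your proposal is correct and follows the same route the paper intends: the corollary is stated as an immediate consequence of Corollary~\ref{mappinghomo}, with the standard fact that a nonsingular integer matrix has finite cokernel of order equal to the absolute value of its determinant (which you justify via Smith normal form) supplying the rest. Your write-up simply makes explicit the routine linear-algebra step the paper leaves unstated.
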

In the sequel we will need the following result, which can be proved in a number of ways.
\begin{theorem} 
\label{snfthm}
Let $\alpha_1 | \alpha_2 | \dots |\alpha_{2n}$ be integers. Then, there exists a symplectic matrix $M,$ such that the elementary divisors of $M-I$  are $\alpha_1, \dotsc, \alpha_{2n}.$
\end{theorem}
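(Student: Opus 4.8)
The plan is to build $M$ block-diagonally and thereby reduce everything to the $2\times 2$ case. Equip $\integers^{2n}$ with the symplectic form $J_n$ that is the orthogonal direct sum of $n$ copies of the standard form $J=\bigl(\begin{smallmatrix}0&1\\-1&0\end{smallmatrix}\bigr)$ on $\integers^2$. Then a block-diagonal matrix $M=\diag(N_1,\dots,N_n)$ with $2\times 2$ blocks $N_i$ preserves $J_n$ if and only if each $N_i$ preserves $J$; and since for a $2\times2$ matrix one has $N_i^{\mathsf T}JN_i=(\det N_i)J$, this is the same as asking $N_i\in\SL(2,\integers)$. Moreover $M-I=\diag(N_1-I,\dots,N_n-I)$, so if each $N_i-I$ is $\GL(2,\integers)$-equivalent to $\diag(\alpha_{2i-1},\alpha_{2i})$, then $M-I$ is $\GL(2n,\integers)$-equivalent to $\diag(\alpha_1,\alpha_2,\dots,\alpha_{2n})$ — and that matrix is \emph{already} in Smith normal form, precisely because by hypothesis $\alpha_1\mid\alpha_2\mid\cdots\mid\alpha_{2n}$. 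So it suffices to handle each consecutive pair separately.

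Fix integers $\alpha\mid\beta$. I would simply take
\[
N=\begin{pmatrix}1 & \alpha\\ -\beta & 1-\alpha\beta\end{pmatrix},
\]
so that $\det N=(1-\alpha\beta)+\alpha\beta=1$, hence $N\in\SL(2,\integers)$. Then
\[
N-I=\begin{pmatrix}0 & \alpha\\ -\beta & -\alpha\beta\end{pmatrix},
\]
whose entries have gcd $\gcd(\alpha,\beta)=\alpha$ (using $\alpha\mid\beta$) and whose determinant is $\alpha\beta$. For a $2\times2$ integer matrix the gcd of the $1\times1$ minors and the gcd of the $2\times2$ minors pin down the Smith normal form: it is $\diag(\alpha,\alpha\beta/\alpha)=\diag(\alpha,\beta)$ when $\beta\ne0$, and $\diag(\alpha,0)$ when $\beta=0$ (in which case $N-I$ has rank at most $1$). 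In both cases the elementary divisors of $N-I$ are $\alpha,\beta$. (If one insists on nonnegative elementary divisors, replace the $\alpha_j$ by their absolute values throughout; the same $N$ still works, since passing to absolute values changes the gcd and the determinant only by a unit.)

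Assembling, let $M=\diag(N_1,\dots,N_n)$, where $N_i$ is the matrix above for the pair $\alpha_{2i-1}\mid\alpha_{2i}$. Then $M\in\Sp(2n,\integers)$ and $M-I$ has elementary divisors $\alpha_1,\dots,\alpha_{2n}$, as required. I do not expect a genuine obstacle here: the construction is completely explicit, and the only points needing care are the identification $\Sp(2,\integers)=\SL(2,\integers)$, the fact that a direct sum of symplectic blocks is symplectic for the block-diagonal form, and the small piece of bookkeeping that amalgamating the pairs reproduces the full chain $\alpha_1\mid\cdots\mid\alpha_{2n}$ — which comes out cleanly exactly because that divisibility chain was assumed from the outset.
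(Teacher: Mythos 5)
Your proposal is correct and follows essentially the same route as the paper: reduce to the $2\times 2$ case using $\Sp(2,\integers)\simeq\SL(2,\integers)$, exhibit an explicit matrix $N$ with $N-I$ having Smith normal form $\diag(\alpha,\beta)$, and assemble the general case block-diagonally. The only difference is the particular $2\times 2$ matrix chosen (yours is arguably simpler than the one in the paper, and you spell out the gcd/determinant verification of the Smith form, which the paper leaves implicit).
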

\begin{proof} 
For $n=1,$ $\Sp(2, \integers) \simeq \SL(2, \integers).$ Suppose the elementary divisors are $\alpha_1 = r, \alpha_2 = rs.$ Then the matrix
\[
\begin{pmatrix}
1-r(1+rs) & r\\
-r(1+s+rs) & 1+r
\end{pmatrix}
\]
is the required matrix.\footnote{The author is indebted to R.P. Stanley for this construction} Now, for $n>1$ we construct a block-diagonal matrix with required elementary divisors.
\end{proof}
\subsection{Geometric structures}
\label{gstructs}
The foundational result for mapping tori is Thurston's theorem:
\begin{theorem}[Thurston's geometrization theorem for 3-manifolds fibering over the circle]
\label{thurstonfiber}
Let $\phi$ be a pseudo-anosov automorphism of a finite area hyperbolic surface $S.$ Then, $M_\phi$ admits a unique (by the Mostow-Prasad rigidity theorem) complete hyperbolic structure of finite volume.
\end{theorem}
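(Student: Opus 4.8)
This famous theorem is due to Thurston and has no short proof, so I can only outline the strategy. Uniqueness, as the statement records, is the Mostow--Prasad rigidity theorem, so all the work lies in the \emph{existence} of the hyperbolic structure. As preliminaries I would note that, $\phi$ being pseudo-Anosov, $M_\phi$ is aspherical (its universal cover is $\hyps^2\times\reals\cong\reals^3$), hence irreducible, and atoroidal --- an essential torus, isotoped into standard position with respect to the fibration, would force some power of $\phi$ to fix the isotopy class of an essential simple closed curve on $S$, which no pseudo-Anosov map does --- so a hyperbolic structure is not \emph{a priori} obstructed. One could now quote geometrization for Haken manifolds (or Perelman's theorem), but the fibered case is exactly the one not reached by the Haken hierarchy induction, since cutting along the fiber merely returns $S\times I$; the route I would take is therefore Thurston's direct construction.

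The goal is a discrete, faithful representation $\rho\colon\pi_1(S)\to\PSL(2,\cx)$ which is \emph{$\phi$-invariant}, i.e.\ such that $\rho\circ\phi_\ast$ is conjugate to $\rho$ by some $\gamma\in\PSL(2,\cx)$; then $\rho$ together with $\gamma$ assembles into a representation of $\pi_1(M_\phi)=\pi_1(S)\rtimes_{\phi_\ast}\integers$. To produce $\rho$, fix any $X\in\teich(S)$ and consider the quasi-Fuchsian manifolds $Q(\phi^{-n}X,\phi^{n}X)$ given by Bers simultaneous uniformization, whose two conformal boundary components are $\phi^{-n}X$ and $\overline{\phi^{n}X}$. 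As $n\to\infty$ these data converge to the two fixed points of $\phi$ on Thurston's compactification of $\teich(S)$ --- the projective classes of the (distinct, filling) unstable and stable measured foliations of $\phi$. The crux is Thurston's \emph{double limit theorem}: because those laminations jointly bind $S$ --- automatic here, each pseudo-Anosov foliation being filling by itself --- the sequence $Q(\phi^{-n}X,\phi^{n}X)$ remains in a compact part of the deformation space $AH(S)$ and subconverges to a limit $\rho_\infty$, representing a doubly degenerate hyperbolic structure on $S\times\reals$ whose two ending laminations are exactly the stable and unstable laminations of $\phi$. Since that ending data is $\phi$-equivariant, and by Thurston's rigidity for doubly degenerate surface groups it determines the structure up to isometry, $\rho_\infty$ is $\phi$-invariant, which supplies the representation of $\pi_1(M_\phi)$.

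It then remains to identify $N=\hyps^3/\rho_\infty(\pi_1 M_\phi)$ with $M_\phi$: that $\rho_\infty(\pi_1 M_\phi)$ is discrete and torsion-free, that $N\cong M_\phi$ --- using tameness of the degenerate ends, Thurston's covering theorem, and the structure of the ends of $S\times\reals$, so that the $\integers$-quotient closes up into a surface bundle rather than something else --- and that $\vol(N)<\infty$. In the cusped case ($S$ of finite area with punctures) one runs the same argument inside the relative deformation space, requiring peripheral curves of $S$ to remain parabolic throughout, so that $M_\phi$ acquires exactly one cusp per $\phi$-orbit of punctures and still has finite volume. The main obstacle, by a wide margin, is the double limit theorem --- keeping the quasi-Fuchsian approximants from degenerating, and pinning down the geometry of the limit --- which requires either Thurston's original geometric-limit and train-track analysis (see also the expositions of Otal and of Canary) or McMullen's later renormalization argument, in which one instead realizes $\phi$ as a contraction, in a suitable Teichmüller-theoretic sense, on a space of conformal structures and takes its fixed point directly. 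Everything else --- the topological preliminaries, extending the representation, and the tameness and volume bookkeeping --- is comparatively routine once that input is in hand.
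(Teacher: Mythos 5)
The paper does not prove this statement at all: it is quoted verbatim as Thurston's geometrization theorem for fibered $3$-manifolds (with uniqueness attributed to Mostow--Prasad rigidity) and used as background, so there is no in-paper argument to compare yours against. Your outline follows the standard route --- topological preliminaries (asphericity, irreducibility, atoroidality from the pseudo-Anosov hypothesis), reduction to finding a $\phi$-invariant discrete faithful representation of $\pi_1(S)$, the quasi-Fuchsian approximants $Q(\phi^{-n}X,\phi^{n}X)$, Thurston's double limit theorem as the crux, then tameness, the covering theorem and the cusped/relative version --- and is accurate as a sketch, with the proviso that it is only a sketch: the step where you deduce $\phi$-invariance of the limit from ``rigidity for doubly degenerate groups with prescribed ending laminations'' is, as stated, an appeal to (a special case of) the ending lamination classification, whereas Thurston's original argument establishes the needed uniqueness for these particular limits directly (and McMullen's fixed-point approach sidesteps it); in a full write-up that point, like the double limit theorem itself, would have to be supplied rather than cited as folklore. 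Since the paper itself simply cites Thurston, your treatment is, if anything, more detailed than what the source offers, and it contains no step that would fail at the level of an outline.
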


The obvious question raised by (already highly non-trivial) Theorem \ref{thurstonfiber} is: what can we say about the hyperbolic geometry of $M_\phi$ if we have some information about $\phi.$ The simplest (in some ways) invariant of a hyperbolic manifold is its volume, and the basic result is Jeff Brock's theorem:
\begin{theorem}[Jeff Brock \cite{brocktori}]
For a pseudo-anosov Map $\phi,$ let $\|\phi\|$ be the translation distance of $\phi$ on Teichm\"uller space equipped with the Weil-Peterson metric. Then, for some universal constant $K > 0,$ we have:
\[
\frac{1}{K} \|\phi\| \leq \Volume(M_\phi) \leq K \|\phi\|.
\]
\end{theorem}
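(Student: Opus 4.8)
The two estimates are proved by quite different arguments; both, however, route through the comparison between Weil--Petersson distance in $\teich(S)$ and the volume of the convex core of a quasi-Fuchsian manifold, which is the principal tool. Throughout, ``universal'' is understood to mean ``depending only on the topological type of $S$,'' and $c,K,K'$ denote such constants. Note first that, by Theorem~\ref{thurstonfiber}, the left-hand side makes sense exactly when $\phi$ is pseudo-Anosov, which is also precisely when $\|\phi\|>0$ and the infimum defining $\|\phi\|$ is attained along a unique $\phi$-invariant Weil--Petersson geodesic axis $\sigma$ (by work of Daskalopoulos--Wentworth and Wolpert on the action of $\mcg(S)$ on the Weil--Petersson completion); we fix $X_0=\sigma(0)$, so that $d_{WP}(X_0,\phi(X_0))=\|\phi\|$ and, more generally, $d_{WP}(\phi^{-n}(X_0),\phi^{n}(X_0))=2n\|\phi\|$ for all $n\ge 1$.

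For the upper bound the plan is to realize the infinite cyclic cover of $M_\phi$ as a geometric limit of quasi-Fuchsian manifolds and to import the inequality $\Volume(\mathrm{core}\,Q(Y,Z))\le c\,d_{WP}(Y,Z)+c$ (Brock's comparison of Weil--Petersson distance with convex-core volume). For $n\ge 1$ put $Q_n=Q(\overline{\phi^{-n}(X_0)},\,\phi^{n}(X_0))$; the displayed identity gives $\Volume(\mathrm{core}\,Q_n)\le 2cn\|\phi\|+c$. Since the stable and unstable laminations of $\phi$ fill $S$, Thurston's double limit theorem applies, and (using Thurston's construction of the hyperbolic structure on a fibered manifold, or the ending lamination theorem in the bounded-geometry case) a subsequence of the $Q_n$ converges geometrically to the doubly degenerate hyperbolic structure on $S\times\reals$ that covers $M_\phi$; the convergence is $(1+\epsilon_n)$-bilipschitz over balls of radius $R_n\to\infty$, with $\epsilon_n\to 0$. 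Because $\sigma$ is the axis of $\phi$, the core of $Q_n$ acquires $\asymp 2n$ ``periods'': translating a fundamental domain $F$ for the deck action of $\langle\phi\rangle$ on the cyclic cover by $\phi^{j}$, $|j|<n$, produces $\sim 2n$ pairwise essentially disjoint subsets of $\mathrm{core}\,Q_n$, each $(1+\epsilon_n)$-bilipschitz to $F$ (this can be made precise through the bilipschitz models of Minsky). Hence $2n\,\Volume(M_\phi)\le(1+\epsilon_n)^{3}\Volume(\mathrm{core}\,Q_n)\le(1+\epsilon_n)^{3}(2cn\|\phi\|+c)$; dividing by $2n$ and letting $n\to\infty$ gives $\Volume(M_\phi)\le c\|\phi\|$.

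For the lower bound the plan is to exploit the fibration to exhibit a short loop representing $\phi$. Write $M_\phi=S\times[0,1]/(x,1)\sim(\phi(x),0)$ and choose a sweepout of $M_\phi$ by pleated (or minimal) surfaces $\Sigma_t$, $t\in S^1$, each homotopic to the fiber; recording for each $t$ a pants decomposition $P_t$ made of curves that are short on $\Sigma_t$ produces a loop in the pants graph $\mathcal{P}(S)$ that represents $\phi$, so its length is at least the translation length $\|\phi\|_{\mathcal{P}}$ of $\phi$ on $\mathcal{P}(S)$. On the other hand, interpolating consecutive $\Sigma_t$ through pleated surfaces and invoking the Margulis lemma together with a packing argument (bounded-geometry regions of $M_\phi$ support only boundedly many pants moves, and $M_\phi$ is covered by $\asymp\Volume(M_\phi)$ such regions) shows $\|\phi\|_{\mathcal{P}}\le K\,\Volume(M_\phi)$. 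Combining this with Brock's quasi-isometry between $(\teich(S),d_{WP})$ and $\mathcal{P}(S)$ yields $\|\phi\|\le K'\,\Volume(M_\phi)$.

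The hard part is the upper bound, and within it two ingredients: first, the quasi-Fuchsian estimate $\Volume(\mathrm{core}\,Q(Y,Z))\asymp d_{WP}(Y,Z)$, whose proof itself combines a Gromov-norm lower bound for convex cores with a pleated-surface sweepout upper bound phrased in terms of the pants graph; and second, upgrading the geometric convergence $Q_n\to(\text{cyclic cover of }M_\phi)$ to the quantitative statement that $\mathrm{core}\,Q_n$ contains $\asymp n$ uniformly bilipschitz copies of a fundamental domain, which requires uniform lower bounds on injectivity radius over the thick part, control of the Margulis tubes, and the fact that the cores genuinely lengthen by one period as $n$ increases. By comparison the lower bound is soft; its only real content is the packing estimate bounding the number of pants moves in a sweepout by the volume.
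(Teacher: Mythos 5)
This statement is quoted in the paper as Brock's theorem, cited from \cite{brocktori}; the paper supplies no proof of it, so there is nothing internal to compare your argument against. Judged against Brock's published argument, your outline is essentially a faithful reconstruction of it: the key input is the quasi-Fuchsian comparison $\Volume(\mathrm{core}\,Q(X,Y))\asymp d_{WP}(X,Y)$ (itself proved, as you say, by a pants-graph/pleated-surface sweepout bound in one direction and a bounded-volume-per-elementary-move construction in the other, together with the quasi-isometry of $(\teich(S),d_{WP})$ with the pants graph), combined with a geometric-limit argument in which $Q(\overline{\phi^{-n}(X_0)},\phi^{n}(X_0))$ converges strongly to the doubly degenerate cyclic cover of $M_\phi$ and the convex cores acquire $\asymp 2n$ uniformly bilipschitz periods. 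Brock in fact runs the geometric-limit comparison in both directions, whereas you handle the bound $\|\phi\|\le K\,\Volume(M_\phi)$ directly in $M_\phi$ by a sweepout/packing argument; that variant is legitimate and is in the same circle of ideas as the quasi-Fuchsian lower bound. The genuinely delicate points are the ones you flag yourself: making the ``$\asymp 2n$ essentially disjoint fundamental domains'' claim quantitative (this needs strong convergence, i.e.\ agreement of algebraic and geometric limits, plus the bounded geometry of the cyclic cover of the closed manifold $M_\phi$), and, in the packing step, dealing with the thin parts of $M_\phi$, where pleated fibers can carry very short curves and the naive ``boundedly many elementary moves per unit volume'' count must be routed through Margulis tubes. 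As a sketch, your proposal is sound and matches the known proof.
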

\begin{remark}
\label{curvecomplexes}
As Brock had also shown (see \cite{brockjams}), the Weil-Peterson metric is coarsely isometric to the pants complex distance, which, in turn, majorizes the curve complex distance.
\end{remark}

\section{Random matrix products and homology}
\label{matprods}
Consider a real semisimple algebraic group $G,$ and consider products of elements of the set $S=\{M_1, \dotsc, M_k\}.$ Consider a random such product $P$ of length $n.$ Using the $KAK$ decomposition (known in the numerical linear algebra circles as the Singular Value Decomposition), we can ask about the distribution of the ordered set of singular values of $P,$ which is the same as the distribution of the main diagonal of $A.$ The fundamental result here is the following theorem of Y. Guivarc'h \cite{guivarc2008spectrum} (the paper is the culmination of other work of Y. Guivarc'h with his collaborators I. Goldsheid and A. Raugi, see \cite{goldsheid1996zariski,guivarc2007actions}; the first paper, together with the classic paper of Goldsheid and Margulis \cite{gol1989lyapunov} are required reading in the subject):
\begin{theorem}[\cite{guivarc2008spectrum}[Theorem 3]]
\label{guivarchthm}
If the subgroup generated by the set $S$ is \emph{Zariski dense} in $G,$ then the random variable 
\[
\frac{1}{\sqrt{n}}\left(\log A - n L(S)\right),
\] where $L(s)$ is the vector of Lyapunov exponents of the set $S,$ converges in distribution to a Gaussian random vector.
\end{theorem}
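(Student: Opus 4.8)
\emph{Proof proposal.} The plan is to identify $\log A$ with the \emph{Cartan projection} $\kappa(P_n)\in\mathfrak a^{+}$ of the random product $P_n=g_n\cdots g_1$, where $g_1,\dots,g_n$ are i.i.d.\ uniform on $S$, $\mathfrak a$ is a maximal $\reals$-split Cartan subalgebra of $G$ and $\mathfrak a^{+}$ its closed positive Weyl chamber, and then to prove a multidimensional central limit theorem for $\kappa(P_n)$ by linearizing it along the flag variety $X=G/P$ of $G$. Since $S$ is finite, all moments are automatically finite and no integrability hypotheses need be checked. First I would recall the law of large numbers and boundary theory of Furstenberg and Guivarc'h--Raugi: Zariski density of $\langle S\rangle$ gives $\tfrac1n\kappa(P_n)\to L(S)$ almost surely, shows that the $S$-random walk on $X$ has a unique stationary probability measure $\nu$, and --- crucially for the linearization --- that $\nu$ is \emph{regular}, charging no proper algebraic subvariety and obeying a Hölder-type estimate for the measure of shrinking neighborhoods of Schubert cells.

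The second step is the linearization. Let $\sigma\colon G\times X\to\mathfrak a$ be the Iwasawa cocycle, so $\sigma(gh,\xi)=\sigma(g,h\xi)+\sigma(h,\xi)$. The key comparison estimate (due to Guivarc'h--Raugi, later systematized by Benoist--Quint) is that there is a constant $C$ with $\|\kappa(P_n)-\sigma(P_n,\xi)\|\le C$ for $\nu$-almost every $\xi$ and all $n$; its proof combines exponential contraction of $P_n$ toward a random point of $X$ with the regularity of $\nu$ and uniform transversality estimates in the Cartan and Iwasawa decompositions. Unwinding the cocycle identity then recasts $\kappa(P_n)-nL(S)$, up to this uniformly bounded error, as a centered Birkhoff-type sum $\sum_{k=1}^{n}\bigl(\Phi_k-L(S)\bigr)$, where $\Phi_k=\sigma(g_k,\zeta_{k-1})$ is a $\gamma$-Hölder function of the state of the $S$-random walk $(\zeta_k)$ on $X$ started from $\nu$ (so $(\zeta_k)$ is stationary and ergodic) and $L(S)=\int\sigma\,d\mu\,d\nu$ is exactly Furstenberg's formula for the Lyapunov vector. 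The problem thus reduces to a vector-valued CLT for an additive functional of this chain, the bounded error being negligible after division by $\sqrt n$.

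For that CLT I would invoke the spectral-gap machinery for the transfer (Markov) operator $\mathcal P$ of the walk acting on the Banach space $C^{0,\gamma}(X)$ of $\gamma$-Hölder functions, for a small $\gamma>0$: Zariski density forces genuine contraction of the walk on $X$ --- via the existence of elements proximal simultaneously in all fundamental representations (going back to Goldsheid--Margulis, Prasad, and Benoist) and the averaging estimates these yield --- hence $\mathcal P$ is quasi-compact on $C^{0,\gamma}(X)$ with $1$ a simple isolated eigenvalue, i.e.\ a spectral gap; the $\SL_d$ case on Grassmannians is Le Page's theorem. Given the gap, two standard routes finish: (i) \emph{Gordin's martingale method} --- solve the Poisson equation $(\mathrm{Id}-\mathcal P)\psi=\Phi-L(S)$ componentwise using the bounded inverse of $\mathrm{Id}-\mathcal P$ on the mean-zero Hölder functions, split the Birkhoff sum into a martingale with stationary increments plus a telescoping coboundary, and apply the multivariate martingale CLT; or (ii) \emph{the Nagaev--Guivarc'h perturbation method} --- show the twisted operators $\mathcal P_{t}\colon\varphi\mapsto\mathcal P\bigl(e^{i\langle t,\Phi\rangle}\varphi\bigr)$ depend analytically on $t$ near $0$, deduce that they stay quasi-compact with a dominant eigenvalue $\lambda(t)=\exp\bigl(i\langle t,L(S)\rangle-\tfrac12\langle\Sigma t,t\rangle+o(|t|^{2})\bigr)$, and read off from the characteristic function that $\tfrac{1}{\sqrt n}\bigl(\kappa(P_n)-nL(S)\bigr)$ converges to a centered Gaussian with covariance $\Sigma$.

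The main obstacle is the spectral gap for $\mathcal P$ on Hölder functions over the \emph{full} flag variety of a general semisimple $G$ (rather than a single Grassmannian of $\SL_d$): one must show that Zariski density alone forces the needed contraction, which amounts to producing elements very proximal in every fundamental representation simultaneously and quantifying how the $S$-average of such elements shrinks Hölder seminorms. A closely related technical point is the uniform comparison $\|\kappa(P_n)-\sigma(P_n,\xi)\|\le C$, which hinges on the regularity (large-deviation and Hölder) estimates for $\nu$ and on the uniform transversality of the Cartan and Iwasawa decompositions. Finally, the bare statement only asserts a Gaussian limit, which a priori could be degenerate; Guivarc'h in fact also proves $\Sigma$ non-degenerate, and this part would follow by showing that a direction $t$ with $\langle\Sigma t,t\rangle=0$ makes $\langle t,\sigma(\cdot,\cdot)\rangle$ a measurable coboundary over the walk, which by a standard argument yields a proper algebraic subgroup containing $\langle S\rangle$, contradicting Zariski density.
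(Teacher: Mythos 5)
This statement is not proved in the paper at all: it is quoted verbatim as Theorem 3 of Guivarc'h's paper \cite{guivarc2008spectrum}, so there is no internal argument to compare against. Your sketch (Cartan projection, Iwasawa-cocycle linearization over the flag variety, spectral gap for the transfer operator on H\"older functions, then the Nagaev--Guivarc'h or Gordin route, with non-degeneracy via the coboundary/Zariski-density argument) follows essentially the same route as the cited proof of Guivarc'h (and its later systematization by Benoist--Quint), and is a correct outline, up to the minor point that the comparison between $\kappa(P_n)$ and the cocycle is only almost surely bounded by a random constant rather than a uniform $C$ --- which is all the CLT needs.
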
 
\begin{remark}
Whether or not a subgroup of a semisimple group is Zarsiki-dense can be efficiently determined using the results of \cite{rivin2013large}.
\end{remark}
We will also use some of our own results (we are stating a special case, for the more general case, see \cite{prasad2003existence,jouve2010splitting,rivin2013large})
\begin{theorem}[\cite{rivin1999growth,rivin2010growth,rivin2008walks,rivin2009walks,MR2725508}]
\label{equidist}
Suppose that $S$ (as above) is a symmetric generating set of a Zariski dense subgroup of $\SL(n, \integers)$ or $\Sp(2n, \integers).$ Then, the products $S_n$ are equidistributed modulo $p$ for all but a finite number of primes $p.$ The set of exceptional primes is empty if $S$ generates $G(\integers).$ Furthermore, the speed of convergence is exponential in the length $n$ of the product.
\end{theorem}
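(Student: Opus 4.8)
The plan is to split the statement into its two genuine ingredients: a \emph{strong approximation} step, which identifies the image of $\Gamma=\langle S\rangle$ in the finite quotients, and a \emph{spectral gap} step, which controls the rate of equidistribution inside each such quotient. Throughout let $G$ denote $\SL_n$ or $\Sp_{2n}$, viewed as a simply connected, (almost) simple algebraic group over $\ratls$.

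First I would establish the structural claim. Since $S$ is symmetric and $\Gamma$ is Zariski dense, the Zariski closure of $\Gamma$ is all of $G$; the strong approximation theorem for Zariski-dense subgroups of simply connected semisimple groups (Nori and Matthews--Vaserstein--Weisfeiler; see also Weisfeiler, and \cite{rivin2013large} for the effective/algorithmic side) then says that the closure of $\Gamma$ in $\prod_p G(\integers_p)$, with $\integers_p$ the $p$-adic integers, is open. An open subgroup of this product surjects onto $G(\ffp p)$ for all but finitely many $p$, which gives the first assertion. If in addition $S$ generates $G(\integers)$ there are no exceptional primes, because reduction $G(\integers)\to G(\ffp p)$ is surjective for every $p$: for $\SL_n$ both groups are generated by elementary matrices, which lift, and for $\Sp_{2n}$ the same holds with symplectic transvections (this is surjectivity of reduction for simply connected Chevalley group schemes over $\integers$).

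Next I would prove equidistribution with an exponential rate in a fixed good quotient. Fix a non-exceptional prime $p$, put $H=G(\ffp p)$ and $\bar S=S\bmod p$, a symmetric generating set of $H$. Picking $\phi$ uniformly in $W_n$ and reducing mod $p$ yields exactly the $n$-fold convolution $\mu^{*n}$ of the uniform measure $\mu$ on $\bar S$. As $\mu$ is symmetric, the convolution operator $T_\mu$ on $\ell^2(H)$ is self-adjoint with $\|T_\mu\|\le 1$; as $\bar S$ generates $H$, the eigenvalue $1$ is simple. For all but finitely many $p$ the group $H$ is perfect, hence has no index-two subgroup, so $\mathrm{Cay}(H,\bar S)$ is non-bipartite and $-1$ is not an eigenvalue of $T_\mu$ (the finitely many remaining primes can be absorbed into the exceptional set, or handled by passing to even-length words). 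Thus the spectral gap $\gamma_p=1-\max\{|\lambda|:\lambda\neq 1\ \text{an eigenvalue of }T_\mu\}$ is positive, and the standard bound for reversible chains gives
\[
\bigl\|\mu^{*n}-\mathrm{unif}_H\bigr\|_{TV}\ \le\ \tfrac12\,|H|^{1/2}\,(1-\gamma_p)^n,
\]
which is the claimed exponential convergence in $n$.

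The main obstacle is the strong approximation input, and specifically the \emph{finiteness} of the exceptional set: for a thin (infinite-index) Zariski-dense $\Gamma$ such primes genuinely occur and locating them is delicate, whereas the case $\Gamma=G(\integers)$ is clean precisely because reduction is everywhere onto. I would also flag that the rate $(1-\gamma_p)^n$ degrades with $p$; obtaining a rate \emph{uniform} in $p$ (not required for the statement above) needs a uniform spectral gap for the family $\{\mathrm{Cay}(G(\ffp p),\bar S)\}_p$, i.e.\ property $(\tau)$ --- which comes from Kazhdan's property (T) for $\SL_n(\integers)$, $n\ge 3$, and $\Sp_{2n}(\integers)$, $n\ge 2$, from Selberg's $3/16$ theorem when $G=\SL_2$, and, for thin $\Gamma$, from the super-approximation theorems of Bourgain--Gamburd, Varj\'u and Salehi Golsefidy; compare the discussion in \cite{rivin2008walks,MR2725508}.
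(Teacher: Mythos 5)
The paper does not actually prove Theorem \ref{equidist}: it imports the statement from the cited references, and your two-step argument (strong approximation in the sense of Nori and Matthews--Vaserstein--Weisfeiler to identify the image in $G(\ffp{p})$ for almost all $p$, then a Perron--Frobenius/spectral-gap bound for the symmetric convolution walk on each fixed finite quotient) is precisely the standard proof underlying those sources, so the proposal is correct and in essence the same approach. One caveat worth flagging: the parity fix you mention (discarding the finitely many primes where $G(\ffp{p})$ is not perfect, or passing to even-length words) is genuinely needed, and it mildly clashes with the clause that the exceptional set is empty when $S$ generates $G(\integers)$ --- for example, for $\SL(2,\integers)$ with the standard symmetric generators, the reduction mod $2$ of a word of length $n$ lands in a fixed coset of the index-two subgroup determined by the parity of $n$, so at such small primes equidistribution holds only along even lengths or in the Ces\`aro sense; this is a defect of the statement as quoted rather than of your argument, and your handling of it is the right one.
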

and also
\begin{theorem}[\cite{rivin2008walks,rivin2009walks,MR2725508,prasad2003existence}]
\label{irredthm}
Under the assumptions of Theorem \ref{equidist}, the characteristic polynomial of a random product has probability exponentially small in $n$ of having reducible characteristic polynomial (in fact, more is true, the Galois group of the characteristic polynomial has exponentially small probability of being nongeneric -- generic being the Weyl group of the ambient Lie group (as Theorem \ref{equidist} is currently stated, the Weyl group is either $S_n$ in the case of $\SL(n),$ or $C_2 \wr S_n$ in the case of $\Sp(2n).$)
\end{theorem}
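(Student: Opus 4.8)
The plan is to reduce the statement to the distribution of the \emph{Frobenius factorization types} of the characteristic polynomial $f_P \in \integers[x]$ of a random product $P \in S_n$, and to feed that into Theorem~\ref{equidist}. Recall the classical dictionary: if a prime $p$ does not divide $\operatorname{disc}(f_P)$, then $f_P \bmod p$ factors over $\ffp p$ into \emph{distinct} monic irreducibles, and the multiset of their degrees is the cycle type of a well-defined conjugacy class --- the Frobenius class at $p$ --- inside $\Gal(f_P/\ratls) \subseteq S_d$, $d=\deg f_P$. In the symplectic case $f_P$ is self-reciprocal, so $\Gal(f_P/\ratls)$ is contained in the hyperoctahedral group $C_2 \wr S_n$ acting on the $2n$ roots, and the relevant invariant is the corresponding \emph{signed} cycle type. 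Since a polynomial with two or more irreducible factors has Galois group containing no $d$-cycle, the reducibility assertion is a special case of the assertion that $\Gal(f_P/\ratls)$ is the full Weyl group $W$ (up to the exponentially small event that $f_P$ is not separable), so I would concentrate on the latter.

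Next comes the group-theoretic input: by Jordan's theorem (a primitive permutation group containing a transposition is $S_n$) together with transitivity and $2$-transitivity criteria, there is a \emph{finite} list $\mathcal{T}$ of cycle types such that any subgroup of $S_n$ meeting the conjugacy class of every member of $\mathcal{T}$ equals $S_n$; concretely one may take an $n$-cycle (forcing transitivity), an $(n-1)$-cycle (which upgrades transitivity to $2$-transitivity, hence primitivity), and a transposition. The analogous finite list for $C_2 \wr S_n$ in the symplectic case is worked out in \cite{jouve2010splitting}. Thus $\Gal(f_P/\ratls) \neq W$ forces, for at least one $T \in \mathcal{T}$, that \emph{no} admissible prime $p$ produces a Frobenius class of type $T$; as $\mathcal{T}$ is finite and independent of $n$, it suffices to prove, for each fixed $T$,
\[
\proj\bigl[\,f_P \bmod p \text{ is not a product of distinct irreducibles of degrees } T \text{ for every admissible } p \le e^{\epsilon n}\,\bigr] = O(\rho^n)
\]
for suitable $\epsilon>0$, $\rho<1$, where $P$ is uniform in $S_n$ and ``admissible'' means outside the finite exceptional set of Theorem~\ref{equidist}.

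For a single prime, one uses that a uniformly random element of $G(\ffp p)$ (with $G = \SL(n)$ or $\Sp(2n)$, the dimension \emph{fixed}) has characteristic polynomial of factorization type $T$ with probability at least a constant $c_T>0$ \emph{independent of $p$}: the locus of such elements is a nonempty subvariety defined over $\ffp p$ (the characteristic polynomial splits into the prescribed pattern, and the element is regular semisimple), so its point count is $\bigl(c_T+O(p^{-1/2})\bigr)\,|G(\ffp p)|$ by Lang--Weil and the structure of regular semisimple classes in finite groups of Lie type. Combining this with the exponentially fast equidistribution of $S_n$ modulo $p$ supplied by Theorem~\ref{equidist} gives $\proj[f_P \bmod p \text{ of type } T] \ge c_T/2$ once $n \ge C\log p$, i.e. for all admissible $p \le e^{\epsilon n}$. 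Finally one needs the events for distinct primes to be quasi-independent: by the Chinese Remainder Theorem and strong approximation, $S_n$ equidistributes (again exponentially fast) in $G(\integers/m) = \prod_{p \mid m} G(\ffp p)$ for squarefree $m \le e^{\epsilon n}$, so the joint factorization type modulo the admissible primes below $e^{\epsilon n}$ is within $O(\rho^n)$ of the product measure; alternatively one invokes Kowalski's large sieve for Frobenius in arithmetic groups. Either way the displayed probability is $\le \prod_{p \le e^{\epsilon n}}(1 - c_T/2) + O(\rho^n)$, which is $O(\rho_1^n)$; a union bound over the finitely many $T \in \mathcal{T}$ completes the argument.

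The main obstacle is precisely this quasi-independence / uniformity step. What is needed is not just equidistribution modulo each fixed $p$, but a spectral gap for the Cayley graphs of $G(\ffp p)$, and of $G(\integers/m)$ for squarefree $m$, that is \emph{uniform} in the modulus --- ``super-strong approximation''. For $\SL(2)$ this is the Bourgain--Gamburd theorem; in higher rank it is due to Salehi-Golsefidy--Varj\'u, and for the arithmetic groups in question it is also contained in the property $(\tau)$ circle of results underlying Theorem~\ref{equidist}. Packaging this uniformity into the large sieve, as in \cite{jouve2010splitting}, is exactly what turns the ``positive probability of the full Galois group'' that a single prime would yield into the asserted probability $O(\rho^n)$ of failure; the remaining ingredients --- Jordan's theorem with its signed analogue, and the Lang--Weil counts --- are standard.
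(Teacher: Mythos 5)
First, a point of reference: the paper does not prove Theorem \ref{irredthm} at all --- it is imported from \cite{rivin2008walks,rivin2009walks,MR2725508,prasad2003existence}, so the only meaningful comparison is with the proofs in those sources. Your outline (reduce genericity of the Galois group to realizing finitely many Frobenius factorization types, obtain a positive density $c_T>0$ for each type among elements of $G(\mathbb{F}_p)$ by point counting, and combine this with exponentially fast equidistribution of the walk modulo primes, with a Jordan-type witness list for $S_n$ and its signed analogue for $C_2\wr S_n$) is exactly the strategy of those references, so in outline you are on the intended track.

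The genuine gap is the quantitative step you wave through with ``either way''. Equidistribution of the walk in $G(\integers/m)$ with exponentially small error is only available for moduli of size at most $e^{\epsilon n}$ (the error carries a power of $|G(\integers/m)|$), whereas the product of all admissible primes $p\le e^{\epsilon n}$ has size roughly $e^{e^{\epsilon n}}$; so CRT does not give you anything close to the product measure over all those primes, and the displayed bound $\prod_{p\le e^{\epsilon n}}(1-c_T/2)+O(\rho^n)$ is unjustified. Running the CRT argument honestly, you may only sieve with a set of primes whose product is $\le e^{\epsilon n}$, i.e.\ about $n/\log n$ primes, and the resulting bound $(1-c_T/2)^{cn/\log n}$ is subexponential --- weaker than what the theorem asserts. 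The mechanism that actually produces exponential decay is the one you list merely as an ``alternative'': the large sieve for random walks on arithmetic groups (Kowalski; \cite{jouve2010splitting}). There the gain is additive in the number of sieving primes --- the probability is bounded by $\Delta/H$ with $H$ at least a constant times the number of primes up to $e^{\epsilon n}$ --- and the equidistribution input is needed only for moduli $pq\le e^{2\epsilon n}$ coming from pairs of primes, which the uniform spectral gap (property $(\tau)$ / super-strong approximation) does supply. So the large sieve is not optional packaging but the essential step; granting it, the remaining ingredients of your sketch (the witness classes, separability handled within the same sieve, and the Lang--Weil densities) are correct and agree with the cited proofs.
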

\section{Applications to homology of fibered manifolds}
Results of sections \ref{homosec} and \ref{matprods} lead, essentially immediately, to the following results.Below, a \emph{random $(\Gamma, n)$ manifold} is one whose (symplectic monodromy) is obtained by a random product of a symmetric generating set of a Zariski dense subgroup $\Gamma$ of $\Sp(2g,\integers).$
\begin{theorem}
\label{homrank}
The probability that the first Betti number of a random $(\Gamma, n)$ manifold is greater than $1$ decreases exponentially as a function of $n.$
\end{theorem}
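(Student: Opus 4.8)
The plan is to combine Corollary~\ref{highrank} with the equidistribution and irreducibility results from Section~\ref{matprods}. By Corollary~\ref{highrank}, the first Betti number of $M_\phi$ equals $1$ plus the dimension of the eigenspace of $\phi_*$ (acting on $H_1(S,\ratls)$) with eigenvalue $1$. So the first Betti number exceeds $1$ precisely when the symplectic monodromy matrix $\phi_* \in \Sp(2g,\integers)$ has $1$ as an eigenvalue, i.e.\ when its characteristic polynomial $\chi_\phi(t)$ is divisible by $(t-1)$. Thus the theorem reduces to the statement: a random product of length $n$ in $\Gamma$ has probability exponentially small in $n$ of having characteristic polynomial with $1$ as a root.

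The key steps, in order, are as follows. First, reduce mod a suitable prime $p$: by Theorem~\ref{equidist}, for all but finitely many primes the image of $W_n$ in $\Sp(2g,\ffp{p})$ is equidistributed with exponentially fast convergence, so for a fixed good prime the proportion of length-$n$ products whose reduction mod $p$ has $1$ as an eigenvalue is, up to an exponentially small error, equal to the proportion of all of $\Sp(2g,\ffp{p})$ with that property. Second, estimate that proportion within $\Sp(2g,\ffp{p})$: the condition $\det(M - I) \equiv 0 \pmod p$ cuts out a proper subvariety (a hypersurface) of the group, so by Lang--Weil / elementary counting its density is $O(1/p)$, uniformly in $n$. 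Third, note that if $M$ has $1$ as an eigenvalue over $\ratls$ then $M \bmod p$ has $1$ as an eigenvalue for \emph{every} prime $p$; hence the rational event is contained in the mod-$p$ event for each good $p$. Combining, the probability of the rational event is at most (density in $\Sp(2g,\ffp{p})$) $+$ (convergence error) $= O(1/p) + C e^{-\delta n}$. Taking $p$ as large as we like among the good primes drives the first term to $0$, leaving only an exponentially small bound — but since we want a bound that is exponentially small \emph{in $n$}, the cleanest route is instead to invoke Theorem~\ref{irredthm}: a random product has exponentially small probability of having reducible (let alone having a linear factor, in particular the factor $t-1$) characteristic polynomial. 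Indeed $t-1$ divides $\chi_\phi$ forces $\chi_\phi$ to be reducible (its degree is $2g \geq 2$), so the event is contained in the ``reducible characteristic polynomial'' event of Theorem~\ref{irredthm}, whose probability is exponentially small in $n$.

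I would actually present both arguments: the mod-$p$ sieve gives the clean conceptual reason (density $O(1/p)$ of the bad set in the finite group), and it already yields that the limiting probability of $b_1 > 1$ is $0$; to upgrade ``goes to zero'' to ``exponentially fast'' one appeals to Theorem~\ref{irredthm} directly, since having $t-1$ as a factor is a special case of reducibility. One subtlety to handle: Theorem~\ref{irredthm} is stated for Zariski dense subgroups of $\Sp(2g,\integers)$, which is exactly the hypothesis on $\Gamma$ in the definition of a random $(\Gamma,n)$ manifold, so the hypotheses match directly; I should just remark that the Zariski density of $\Gamma$ in $\Sp(2g)$ is what licenses the application.

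The main obstacle is a matter of bookkeeping rather than depth: one must make sure the ``bad prime'' set from Theorem~\ref{equidist} does not interfere — but since we only need \emph{one} good prime (or, via Theorem~\ref{irredthm}, none at all, as that theorem already packages the sieve over all good primes into a single exponential bound), this is not a real difficulty. The genuinely substantive input — that a random product equidistributes in the finite symplectic groups, equivalently that its characteristic polynomial is generically irreducible with generic Galois group — is exactly Theorems~\ref{equidist} and~\ref{irredthm}, which we are permitted to assume. So the proof is essentially a translation: \emph{topological} statement (Betti number) $\to$ \emph{linear-algebraic} statement (eigenvalue $1$) $\to$ \emph{arithmetic} statement (factor $t-1$ in $\chi$) $\to$ cited equidistribution/irreducibility result.
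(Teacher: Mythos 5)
Your proposal is correct and follows essentially the same route as the paper: reduce $b_1(M_\phi)>1$ to the monodromy having $1$ as an eigenvalue (Corollary \ref{highrank}), observe this forces the characteristic polynomial to be reducible over $\ratls$, and conclude by the exponential bound of Theorem \ref{irredthm}. The auxiliary mod-$p$ sieve you sketch is harmless but unnecessary, since, as you note yourself, the exponential rate comes directly from Theorem \ref{irredthm}.
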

\begin{proof} Indeed, the first Betti number exceeds $1$ only if $1$ is an eigenvalue of the monodromy, which means that, in particular, the characteristic polynomial of the monodromy is reducible over $\mathbb{Q},$ thus the result follows from Theorem \ref{irredthm}
\end{proof}
\begin{corollary}
\label{unifiber}
A generic $(\Gamma, n)$ manifold fibers in a \emph{unique} way.
\end{corollary}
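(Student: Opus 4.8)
The plan is to deduce this from Thurston's description of the fibered faces of the Thurston norm ball, using Theorem~\ref{homrank} to reduce to the case $b_1 = 1$. Recall first what the statement should mean: a fibration $M_\phi \to S^1$ determines, and up to isotopy of the fiber is determined by, a primitive class in $H^1(M_\phi;\integers)$ (the pullback of the fundamental class of $H^1(S^1;\integers)$), with two fibrations counted as the same when the associated classes agree up to sign — the sign flip merely reverses the orientation of the circle direction and replaces the monodromy $\phi$ by $\phi^{-1}$ on the identical manifold. By Thurston's theorem on the norm for the homology of $3$-manifolds, the classes in $H^1(M_\phi;\integers)$ that arise from fibrations are exactly the primitive integral points lying in the open cones over the \emph{fibered faces} of the unit ball of the Thurston norm on $H^1(M_\phi;\reals)$, and the fiber of a fibration is the (isotopically unique, when $M_\phi$ is irreducible) norm-minimizing surface in its homology class.

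Next I would bring in genericity. By the pseudo-Anosov genericity recorded in the introduction together with Theorem~\ref{thurstonfiber}, a generic $(\Gamma,n)$ manifold $M_\phi$ is hyperbolic — in particular irreducible and atoroidal — so that Thurston's machinery applies with no caveats and the norm is nondegenerate on the fibered class. By Theorem~\ref{homrank}, the probability that $b_1(M_\phi) > 1$ decays exponentially in $n$; hence, with probability $1 - O(e^{-cn})$, we have $b_1(M_\phi) = 1$, i.e.\ $H^1(M_\phi;\reals) \cong \reals$ and $H^1(M_\phi;\integers) \cong \integers$.

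The conclusion is then immediate. In $H^1(M_\phi;\reals)\cong\reals$ the unit ball of the Thurston norm is a symmetric interval, whose faces are its two endpoints, which are negatives of one another; correspondingly the only primitive integral classes are a generator $\alpha_0$ of $H^1(M_\phi;\integers)$ and its negative $-\alpha_0$. Since $M_\phi$ is fibered by construction, the face met by the ray through $\alpha_0$ is a fibered face (and so, by symmetry of the ball, is the opposite one), and there is no further cohomology class to examine; hence $M_\phi$ fibers in exactly one way up to isotopy and the unavoidable sign ambiguity. Intersecting the two exponentially-generic events above ($M_\phi$ hyperbolic, and $b_1(M_\phi)=1$) completes the argument.

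I do not expect a genuine obstacle here — the corollary is ``soft'' given Theorem~\ref{homrank} — so the only real care needed is in the statement rather than the proof: one must phrase uniqueness up to isotopy of the fiber and up to reversing the $S^1$ direction, since $\alpha_0$ and $-\alpha_0$ always both fiber when either does. The two nontrivial external inputs being quoted are Thurston's identification of fibrations with primitive classes in cones over fibered faces (so that a single cohomology class cannot support two non-isotopic fibrations) and the exponential decay of the probability that $b_1 > 1$ from Theorem~\ref{homrank}; everything else is bookkeeping.
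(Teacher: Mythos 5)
Your argument is correct and matches the paper's intent: the corollary is stated without proof as an immediate consequence of Theorem~\ref{homrank}, the point being exactly that $b_1(M_\phi)=1$ generically, so up to sign there is only one primitive class in $H^1(M_\phi;\integers)$ and hence (by Thurston's fibered-face theory) only one fibration up to isotopy and orientation of the circle direction. Your filling-in of the hyperbolicity/irreducibility caveats and the sign ambiguity is just the bookkeeping the paper leaves implicit.
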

See E. Hironaka's nice paper \cite{hironaka2012quotient} for a discussion of the other situation (where there are many fiberings) -- of course, there are many of those produced randomly as well, as long as we take a non-elementary subgroup of the mapping class group whose Torelli image is \emph{not} Zariski-dense in the symplectic group.
\begin{theorem}
\label{homrank2}
For a prime number $p,$ the probability of the rank of $\mathbb{F}_p$ homology of a random $(\Gamma, n)$ manifold being greater than $k$ is of the order of $1/p^k$ for $n$ large.
\end{theorem}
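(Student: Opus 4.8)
The plan is to separate the arithmetic-dynamical input from a finite-group computation. By Corollary~\ref{highrank}, taking coefficients in $\mathbb{F}_p$, the rank of $H_1(M_\phi,\mathbb{F}_p)$ exceeds $k$ exactly when $\dim_{\mathbb{F}_p}\ker(\phi_*-I)\ge k$, where $\phi_*$ is the symplectic monodromy acting on $H_1(S,\mathbb{F}_p)\cong\mathbb{F}_p^{2g}$; this depends only on the reduction modulo $p$ of the integral monodromy. I would then invoke Theorem~\ref{equidist}: since $\Gamma$ is Zariski dense in $\Sp(2g,\integers)$, for all but finitely many primes $p$ the image of $\Gamma$ in $\Sp(2g,\mathbb{F}_p)$ is everything (strong approximation) and the reductions modulo $p$ of the length-$n$ random products equidistribute in $\Sp(2g,\mathbb{F}_p)$ with error exponentially small in $n$. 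Hence, as $n\to\infty$,
\[
\Pr\big[\operatorname{rank} H_1(M_\phi,\mathbb{F}_p)>k\big]\;\longrightarrow\;\mu_{p,k}\;:=\;\Pr_{A}\big[\dim_{\mathbb{F}_p}\ker(A-I)\ge k\big],
\]
with $A$ uniform on $\Sp(2g,\mathbb{F}_p)$ and exponentially small error; it remains to estimate $\mu_{p,k}$. (The finitely many exceptional primes, and the case where $\Gamma$ is a proper subgroup of $\Sp(2g,\integers)$, are handled identically with $\Sp(2g,\mathbb{F}_p)$ replaced by the relevant finite image group.)

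For the upper bound I would run a first moment on fixed lines. Write $D(A)=\dim_{\mathbb{F}_p}\ker(A-I)$ and let $F(A)=(p^{D(A)}-1)/(p-1)$ be the number of projective lines fixed pointwise by $A$ (equivalently, the lines contained in $\ker(A-I)$); note that $\{D(A)\ge k\}=\{F(A)\ge (p^k-1)/(p-1)\}$. Double-counting the set $\{(A,v):v\neq 0,\ Av=v\}$, first over $A\in\Sp(2g,\mathbb{F}_p)$ and then over the $p^{2g}-1$ nonzero vectors $v$ — all of which have stabilizers of equal size, since $\Sp(2g,\mathbb{F}_p)$ is transitive on $\mathbb{F}_p^{2g}\setminus\{0\}$ (every nonzero vector extends to a symplectic basis) — yields $\mathbb{E}_A\big[p^{D(A)}-1\big]=1$, i.e. $\mathbb{E}_A F(A)=1/(p-1)$. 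Markov's inequality then gives
\[
\mu_{p,k}\;\le\;\frac{\mathbb{E}_A F(A)}{(p^k-1)/(p-1)}\;=\;\frac{1}{p^k-1},
\]
so $\Pr[\operatorname{rank} H_1(M_\phi,\mathbb{F}_p)>k]\le \tfrac{1}{p^k-1}+o(1)$, the $o(1)$ exponentially small in $n$; this is the asserted bound of order $p^{-k}$.

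The substantive half is the lower bound — that $\mu_{p,k}$ is genuinely of order $p^{-k}$, hence asymptotic to some $c_k/p^{k}$ as $p\to\infty$ — and I expect that is where the real work lies. For $k=1$ it follows from a second moment: $\mathbb{E}_A\big[F(A)(F(A)-1)\big]$ counts ordered pairs of distinct fixed lines and is governed by the pointwise stabilizers of symplectic $2$-planes, from which $\mathbb{E}_A\big[F(A)^2\big]=O(1/p)$, so the Paley--Zygmund inequality forces $\mu_{p,1}=\Theta(1/p)$. For $k\ge 2$ the natural object is instead the first moment of the number of $k$-dimensional subspaces fixed pointwise by $A$, namely $\sum_{\dim W=k}|\stab(W)|/|\Sp(2g,\mathbb{F}_p)|$; evaluating it means splitting the subspaces $W$ according to the rank $2r$ of the restricted symplectic form (the pointwise stabilizer of such a $W$ having order $p^{N_r}\,|\Sp(2g-2r,\mathbb{F}_p)|$), and the dominant type determines $c_k$ and its dependence on the genus. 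The cleanest route to the exact constants is to quote the known limiting distribution of $\dim_{\mathbb{F}_p}\ker(A-I)$ for random elements of the finite symplectic groups, computed via cycle-index generating functions; identifying which term of that expansion controls the tail — and checking it is of the claimed order — is the main obstacle.
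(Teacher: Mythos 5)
Your reduction is the same as the paper's: Corollary \ref{highrank} turns the statement into one about $D(A)=\dim_{\mathbb{F}_p}\ker(A-I)$ for the mod-$p$ symplectic monodromy, and Theorem \ref{equidist} replaces the random product by a nearly uniform element $A$ of $\Sp(2g,\mathbb{F}_p)$ with exponentially small error. From there the paper argues differently: it invokes the equidistribution of \emph{characteristic polynomials} in $\Sp(2g,\mathbb{F}_p)$ attributed to Borel in \cite{chavdarov1995generic} and counts polynomials of the form $(1-x)^k q(x)$, whereas you work directly with the fixed-space dimension. Your first-moment/Markov step, $\mathbb{E}\bigl[p^{D(A)}-1\bigr]=1$ by transitivity of $\Sp(2g,\mathbb{F}_p)$ on nonzero vectors and hence $\Pr[D(A)\ge k]\le 1/(p^k-1)$, is correct, and it is in fact better matched to Corollary \ref{highrank} than the paper's count, since divisibility of the characteristic polynomial by $(1-x)^k$ controls the algebraic, not the geometric, multiplicity of the eigenvalue $1$.

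The genuine gap is the matching lower bound for $k\ge 2$, which you explicitly defer, and the computation you propose in order to close it does not produce the claimed order. The expected number of $k$-dimensional subspaces fixed pointwise by a uniform $A\in\Sp(2g,\mathbb{F}_p)$ is dominated by the subspaces on which the symplectic form has maximal rank and is of order $p^{-k(k+1)/2}$, not $p^{-k}$: for $k=2$ the dominant term is (number of symplectic $2$-planes) times $|\Sp(2g-2,\mathbb{F}_p)|/|\Sp(2g,\mathbb{F}_p)|\approx p^{4g-4}\cdot p^{1-4g}=p^{-3}$, the isotropic planes contributing only about $p^{-4}$; and in $\Sp(2,\mathbb{F}_p)$ the event $D\ge 2$ forces $A=I$, probability about $p^{-3}$. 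This agrees with the Rudvalis--Shinoda/Fulman limiting distributions you allude to, whose tail is $\Pr[D\ge k]\asymp p^{-k(k+1)/2}$. So your Paley--Zygmund argument does settle $k=1$, but for $k\ge 2$ your route yields only the one-sided bound $O(p^{-k})$, and carrying it further would actually show the two-sided assertion ``of order $1/p^k$'' fails at the level of the uniform measure on $\Sp(2g,\mathbb{F}_p)$; note that the paper's own count is also not two-sided here, since for reciprocal characteristic polynomials the factor $(1-x)$ occurs to even order, making even the algebraic-multiplicity frequency $\approx p^{-\lceil k/2\rceil}$, while the geometric multiplicity relevant to Corollary \ref{highrank} is rarer still. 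A smaller point: for the finitely many exceptional primes of Theorem \ref{equidist} the image of $\Gamma$ mod $p$ can be a proper subgroup, and there your transitivity-based moment computation needs a separate justification.
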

\begin{proof} This follows from the result attributed to A. Borel in \cite{chavdarov1995generic}, together with Theorem \ref{equidist}. The result of Borel states that the characteristic polynomials are essentially equidistributed in $\Sp(2g, \mathbb{F}_p).$ Now, it is clear that around a $1/p^k$ fraction of all characteristic polynomials have the form $(1-x)^k q(x).$
\end{proof}
\begin{theorem}
\label{torsionord}
The log of the order of torsion $T$ in $H_1(M, \integers)$ for $M$ a $(\Gamma, n)$ manifold grows linearly with $n.$ furthermore, for some quantity $a,$ (the sum of the positive Lyapunov exponents of the random matrix products corresponding to the generating set $S,$ we have $(\log T - n a)/\sqrt{n}$ is asymptotically normally distributed.
\end{theorem}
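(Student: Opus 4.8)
The plan is to reduce the statement to the asymptotics of $\log|\det(P_n - I)|$, where $P_n$ denotes the random symplectic monodromy (a product of $n$ generators of $\Gamma$), and then to feed this into the limit theorems of Section~\ref{matprods}. By the corollary of Section~\ref{homosec} computing the torsion of $H_1(M,\integers)$, whenever $P_n - I$ is nonsingular we have exactly $\log T = \log|\det(P_n - I)|$; and by Theorem~\ref{homrank} the event that $1$ is an eigenvalue of $P_n$ (the only way $P_n-I$ can fail to be invertible) has probability decaying exponentially in $n$. Since the entries of $P_n$ are bounded by $C^n$ for a constant $C$ depending only on the generating set, we always have the deterministic bound $0 \le \log T \le 2g(n\log C + \log 2g)$, so the exceptional event contributes $o(1)$ to any expectation and does not affect convergence in distribution. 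For the linear growth, write $\det(P_n - I) = (-1)^g\prod_{i=1}^{g}\frac{(\lambda_i - 1)^2}{\lambda_i}$, where $\lambda_1,\lambda_1^{-1},\dots,\lambda_g,\lambda_g^{-1}$ are the eigenvalues of $P_n$, normalized so that $|\lambda_i|\ge 1$. Taking $\frac1n\log|\cdot|$ and using the multiplicative ergodic theorem (Zariski density of $\Gamma$ makes the Lyapunov spectrum simple, $\gamma_1>\cdots>\gamma_g>0$), the eigenvalues of modulus $>1$ contribute $\sum_{i\le g}\log|\lambda_i|\sim n(\gamma_1+\cdots+\gamma_g)$ while the remaining factors are subexponential, so $\frac1n\log|\det(P_n-I)| \to a := \gamma_1+\cdots+\gamma_g$ almost surely; together with the deterministic upper bound and dominated convergence, $\frac1n\mathbb{E}[\log T]\to a$ as well.

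For the central limit theorem, the idea is that $|\det(P_n - I)|$ is comparable, up to a multiplicative factor that is $e^{o(\sqrt n)}$ with high probability, to $\|\wedge^g P_n\| = \sigma_1(P_n)\cdots\sigma_g(P_n)$, the norm of $P_n$ acting on the top exterior power $\wedge^g\reals^{2g}$. Applying Theorem~\ref{guivarchthm} to $\Sp(2g)$ (equivalently, to the Zariski-dense image of $\Gamma$ in $\GL(\wedge^g\reals^{2g})$) shows that $\tfrac{1}{\sqrt n}\big(\log\|\wedge^g P_n\| - na\big) = \tfrac{1}{\sqrt n}\big(\sum_{i=1}^g\log\sigma_i(P_n) - na\big)$ converges in distribution to a centered Gaussian (nondegenerate here, again by Zariski density). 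It then remains to show that $E_n := \log|\det(P_n-I)| - \log\|\wedge^g P_n\|$ is $o_P(\sqrt n)$, after which Slutsky's theorem finishes the argument.

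To analyze $E_n$ I would use the eigenvalue/singular-value dictionary: since $\frac{|\lambda_i-1|^2}{|\lambda_i|} = |\lambda_i|\cdot|1-\lambda_i^{-1}|^2$, one gets $|\det(P_n-I)| = \big(\prod_{i=1}^g|\lambda_i|\big)\prod_{i=1}^g|1-\lambda_i^{-1}|^2$, hence $E_n = \big(\sum_{i=1}^g\log|\lambda_i| - \log\|\wedge^g P_n\|\big) + \sum_{i=1}^g\log|1-\lambda_i^{-1}|^2$. The first bracket is (minus) the gap between the Cartan and Jordan projections of the random product; by Weyl's majorant inequality it is $\le 0$, and in this setting it is $o_P(\sqrt n)$ (the Jordan and Cartan projections are known to obey the same Gaussian limit law). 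The second sum is bounded above by $2g\log 2$ for free, since $|1-\lambda_i^{-1}|\le 2$; so the only thing that can make $E_n$ large and negative is an eigenvalue $\lambda_i$ of $P_n$ lying anomalously close to $1$.

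Controlling that last possibility is, I expect, the main obstacle. A purely algebraic (Liouville/height) bound only gives $|\lambda_i - 1|\ge e^{-O(n)}$, which is far too weak; instead one must use that the characteristic polynomial of $P_n$ is, with probability $1 - e^{-cn}$, irreducible over $\mathbb{Q}$ with Galois group the full hyperoctahedral group $C_2\wr S_g$ (Theorem~\ref{irredthm}), and upgrade this to a quantitative statement: with probability tending to $1$, no eigenvalue of $P_n$ lies within $e^{-o(\sqrt n)}$ of $1$ (equivalently, $|\det(P_n-I)| \ge e^{-o(\sqrt n)}\,\|\wedge^g P_n\|$ with probability $\to 1$). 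This should follow by combining the exponentially fast equidistribution modulo primes of Theorem~\ref{equidist} with an anticoncentration estimate for the random integer $\det(P_n-I)$, and it is here that the bulk of the technical work lies.
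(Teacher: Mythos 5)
Your proposal follows essentially the same route as the paper: the paper's entire proof is the observation that $T=|\det(M-I)|$ and that for large $n$ the quantity $\log T$ is ``basically'' the sum of the top $g$ log singular values, to which Guivarc'h's Theorem \ref{guivarchthm} is then applied. What you add is a careful accounting of the error term $E_n=\log|\det(P_n-I)|-\log\lVert\wedge^g P_n\rVert$, and the obstacle you isolate --- ruling out an eigenvalue of $P_n$ anomalously close to $1$, i.e.\ a lower bound $|\det(P_n-I)|\ge e^{-o(\sqrt n)}\lVert\wedge^g P_n\rVert$ with high probability --- is precisely what the paper's ``basically'' suppresses; the paper offers no argument for it either, so on this point your (admittedly unfinished) sketch via irreducibility, equidistribution mod $p$, and anticoncentration of the integer $\det(P_n-I)$ is already more explicit than the published proof. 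In short: same approach, with your version making visible the one genuinely nontrivial estimate that the paper's one-sentence argument leaves implicit.
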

\begin{proof}
This is an immediate consequence of Guivarc'h's Theorem \ref{guivarchthm}, since the order of torsion is $T=\det(M - I),$ where $M$ is the monodromy, so for large $n$ we see that $\log T$ is basically the sum of the top $g$ singular values, which is normal by Theorem \ref{guivarchthm}.
\end{proof}
Note that Theorems \ref{homrank2} and \ref{torsionord} are in some sense opposite. The first states that it is unlikely to have more than the ``standard'' amount of $p$ torsion, for any given $p,$ the second states that the amount of torsion grows exponentially, which, together indicates that the number of prime divisors of the torsion $T$ satisfies a prime number theorem (that is, grows like $n/\log n.$)
\section{Statistics of volume and related quantities}
\label{volsec}
\subsection{Volume the hard way}
\label{volumehard}
By Brock's theorem, the volume of a surface bundle is quasi-the-same as the Weil-Petersson translation distance of the monodromy. Note that, as observed by S. Kojima in \cite{kojimanorm}, it is not necessary for the bundle to actually be hyperbolic for this statement to make sense - we define volume as simply the volume of the hyperbolic pieces in the JSJ decomposition of the mapping torus. Now, we have the following version of the multiplicative ergodic theorem (due to A. Karlsson and G. Margulis \cite{karlssonmargulis}): 
\begin{theorem}
\label{karlssonmar}
Let $G$ be a (finitely generated) semigroup of isometries of a uniformly convex, complete, CAT(0) metric space $X,$ then for almost every random product $P$ of the generators, 
\[\lim \frac1n d(y, P(y)) = A,\] and for $A> 0,$ the orbit follows a geodesic ray starting at $y.$
\end{theorem}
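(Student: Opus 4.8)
The plan is to prove the statement in two independent parts: the existence of the linear drift $A$, which is soft and uses no geometry, and the geodesic‐tracking assertion when $A>0$, which is the real content and the only place where uniform convexity enters (it plays the role that local compactness plays in earlier noncommutative ergodic theorems). For the drift, encode the random product by the Bernoulli shift: let $\omega=(\omega_1,\omega_2,\dots)$ range over $\{1,\dots,k\}^{\mathbb{N}}$ with the product measure, which is ergodic under the shift $\sigma$, let $g_1,\dots,g_k$ be the generators, and put $P_n(\omega)=g_{\omega_1}\cdots g_{\omega_n}$ and $c_n(\omega)=d(y,P_n(\omega)y)$. Since $P_{n+m}(\omega)=P_n(\omega)P_m(\sigma^n\omega)$ and each $P_n(\omega)$ is an isometry,
\[
c_{n+m}(\omega)=d\big(y,\,P_n(\omega)P_m(\sigma^n\omega)y\big)\le d\big(y,P_n(\omega)y\big)+d\big(y,P_m(\sigma^n\omega)y\big)=c_n(\omega)+c_m(\sigma^n\omega),
\]
so $(c_n)$ is a subadditive cocycle, and $c_1\in L^1$ because the generating set is finite. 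Kingman's subadditive ergodic theorem then yields a constant $A=\inf_n\tfrac1n\int c_n$ with $\tfrac1n c_n(\omega)\to A$ for a.e. $\omega$, which is the first assertion; when $A=0$ nothing further is claimed.

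Now assume $A>0$. Fix $\epsilon\in(0,A)$. Since $c_n(\omega)-(A-\epsilon)n\to+\infty$, the function $n\mapsto c_n(\omega)-(A-\epsilon)n$ has infinitely many record values; call such an $n$ \emph{$\epsilon$-regular}, so that $c_n(\omega)-c_m(\omega)\ge(A-\epsilon)(n-m)$ for all $0\le m\le n$. Along a regular time $n$, combine this lower bound with the triangle inequality $c_n\le c_m+d(P_m(\omega)y,P_n(\omega)y)$ and with the upper bound $d(P_m(\omega)y,P_n(\omega)y)=d\big(y,P_m(\omega)^{-1}P_n(\omega)y\big)\le(A+\epsilon)(n-m)$ — valid for $m$ in a range where the shifted word $\sigma^m\omega$ has already equilibrated (again by $\tfrac1k c_k\to A$, in $L^1$) — to deduce that for every $m$ with $m/n$ bounded away from $0$ and $1$ the triangle‐inequality defect
\[
\delta_m:=c_m(\omega)+d\big(P_m(\omega)y,P_n(\omega)y\big)-c_n(\omega)
\]
is $o(n)$, i.e.\ $P_m(\omega)y$ is nearly on the geodesic segment from $y$ to $P_n(\omega)y$. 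The geometric input is then a quantitative form of uniform convexity: there is a gauge $h$, depending only on the modulus of convexity of $X$, with $h(t)\to0$ as $t\to0$, such that a point with triangle‐inequality defect $\delta$ relative to a geodesic of length $\ell$ lies within distance $\ell\, h(\delta/\ell)$ of that geodesic. Applying this with $\ell$ of order $n$ and $\delta=\delta_m=o(n)$ places $P_m(\omega)y$ within $o(n)$ of the point of $[y,P_n(\omega)y]$ at distance $\approx Am$ from $y$. Running this over a fixed $\epsilon$-regular subsequence $n_1<n_2<\cdots$ forces the segments $[y,P_{n_i}(\omega)y]$ to fellow‐travel with sublinear error, so (again using convexity to handle the non‐proper case) they converge to a geodesic ray $\gamma$ from $y$; letting $\epsilon\downarrow0$ along a diagonal pins down $\gamma$ and gives $\tfrac1n d\big(P_n(\omega)y,\gamma(An)\big)\to0$ along the subsequence, and a final sandwiching between nearby regular times, together with one more use of $c_n/n\to A$, upgrades this to convergence along every $n$ — which is exactly the statement that the orbit follows $\gamma$.

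The soft part above is routine; the work is entirely in the $A>0$ case, and within it I expect two pressure points. First, making the uniform‐convexity estimate quantitative enough that an $o(n)$ defect yields an $o(n)$ displacement from the geodesic: this is where one genuinely needs \emph{uniform} (not merely strict) convexity, and where the modulus of convexity must be tracked explicitly. Second, promoting subsequential tracking to tracking along all $n$ without assuming $X$ proper, so that the limiting ray cannot be extracted by Arzelà–Ascoli but must be produced by the convexity‐driven fellow‐traveling estimate itself — this is the genuinely delicate point. An alternative packaging of the same argument, which I would keep in reserve, is to push the orbit $P_n(\omega)y$ into the horofunction (metric) compactification of $X$ and use the subadditive estimates above to show its limit is a Busemann point, with $\gamma$ the associated ray. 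Either way, the ray depends on the trajectory $\omega$; only $A$ is deterministic, by ergodicity.
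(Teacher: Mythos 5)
You should first know that the paper itself does not prove this statement: it is quoted as the Karlsson--Margulis multiplicative ergodic theorem \cite{karlssonmargulis}, with only the accompanying remark that the first assertion is a consequence of the subadditive ergodic theorem while the geodesic-tracking assertion is the deep part. Your first paragraph (Bernoulli shift, subadditive cocycle, Kingman) is correct and is exactly the ``trivial part'' the paper alludes to.

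The tracking argument, however, has a genuine gap at its key ergodic step. Your ``$\epsilon$-regular'' times are record times of $c_n(\omega)-(A-\epsilon)n$, which control \emph{prefix} differences $c_n(\omega)-c_m(\omega)$; but the triangle defect at time $m$ involves $d\bigl(P_m(\omega)y,P_n(\omega)y\bigr)=a(n-m,\sigma^m\omega)$, the cocycle evaluated along the \emph{shifted} sequence. To make $\delta_m=o(n)$ you invoke $a(n-m,\sigma^m\omega)\le(A+\epsilon)(n-m)$ ``once $\sigma^m\omega$ has equilibrated,'' but this requires the a.e.\ Kingman convergence to hold simultaneously for the roughly $n$ shifts $m\in[\delta n,(1-\delta)n]$, for all large $n$. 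Pointwise a.e.\ convergence comes with no rate, so a union bound or Borel--Cantelli does not close this, and this uniformity over shifts is precisely what fails in the naive approach. The actual Karlsson--Margulis proof substitutes a different and nontrivial ergodic lemma: for a.e.\ $\omega$ and every $\epsilon>0$ there are infinitely many $n$ such that $a(n,\omega)-a(n-k,\sigma^k\omega)\ge(A-\epsilon)k$ for \emph{all} $1\le k\le n$. With that choice of good times the defect at time $k$ is bounded by $a(k,\omega)-(A-\epsilon)k\le 2\epsilon k$, using the ray of convergence only for the single fixed $\omega$, so no uniformity over shifts is ever needed; the lemma itself is proved by a separate measure-theoretic/pigeonhole argument and is not a corollary of Kingman. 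Without this lemma (or an equivalent device) your step ``$\delta_m=o(n)$ for all $m$ with $m/n$ bounded away from $0$ and $1$'' is unsupported, and that is the heart of the theorem; the remaining ingredients you list (quantitative uniform convexity to convert small defect into small distance to the segment, and extraction of the limit ray in a complete non-proper space) are indeed the right second-order concerns, but they come after the missing lemma.
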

\begin{remark} the first part of the theorem, which is what we will be using in this section is at trivial consequence of the subadditive ergodic theorem -- the interesting part of the Karlsson-Margulis theorem \ref{karlssonmar} is the fact that random products follow geodesic rays. What is usually not trivial is showing that the magic constant $A$ is positive. This generally follows from the non-amenability of our (semi)group, and therefore we will here and in the future impose the condition that our group contains two non-commuting pseudo-Anosov elements -- this is commonly known as a ``nonelementary'' subgroup of the mapping class group.
\end{remark}
We can now state the next result:
\begin{theorem}
\label{volgrowth}
Suppose that $S=\{\gamma_1, \dotsc, \gamma_k\}$ generate a non-elementary subgroup of the mapping class group of genus $g>1.$ Then, the Gromov norm of the mapping torus of a random product $\Gamma_n$ of length $n$ of the $\gamma_i$ is almost surely contained in the interval $(nC_1, nC_2),$ for some some positive constants $C_1, C_2,$ which depend only on $S.$
\end{theorem}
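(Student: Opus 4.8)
The plan is to combine Brock's theorem (relating the Gromov norm of $M_{\Gamma_n}$ to the Weil--Petersson translation distance of $\Gamma_n$ acting on Teichm\"uller space) with the first, "easy" half of the Karlsson--Margulis multiplicative ergodic theorem (Theorem \ref{karlssonmar}) applied to the action of the mapping class group on the Weil--Petersson completion of Teichm\"uller space. First I would recall that by Brock's theorem there is a universal $K>0$ with $\frac1K\|\phi\|_{WP}\le \Volume(M_\phi)\le K\|\phi\|_{WP}$, and that (up to the universal multiplicative constant relating Gromov norm to hyperbolic volume, via Gromov--Thurston) it suffices to prove that $\|\Gamma_n\|_{WP}$ is almost surely in an interval $(nC_1', nC_2')$. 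Here $\|\cdot\|_{WP}$ denotes the stable translation length $\lim_m \frac1m d_{WP}(y,\phi^m(y))$; since $\Gamma_n$ is pseudo-Anosov with probability $\to 1$ this equals its honest translation distance, but in fact for the statistical statement we only need the orbit displacement $\frac1n d_{WP}(y,\Gamma_n(y))$, which is what Karlsson--Margulis controls directly.

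The key steps, in order, are: (1) Fix a basepoint $y$ in Teichm\"uller space $\teich_g$; the Weil--Petersson metric completion $\overline{\teich_g}$ is a complete CAT(0) space (Wolpert; Yamada), and the mapping class group acts by isometries, so the hypotheses of Theorem \ref{karlssonmar} are met. (2) Apply Theorem \ref{karlssonmar} to the symmetric generating set $S$: for almost every random product, $\lim_{n} \frac1n d_{WP}(y,\Gamma_n(y)) = A$ exists. (3) Show $A>0$: this is exactly the non-triviality assertion, and it follows from the non-elementary hypothesis --- the subgroup $\langle S\rangle$ contains two independent pseudo-Anosov elements, hence a free subsemigroup acting with positive drift, so the random walk escapes to infinity at positive speed (alternatively invoke that a non-elementary subgroup is non-amenable and the drift of a non-degenerate random walk on a non-amenable group acting properly is positive; one may cite Maher--Tiozzo or the positivity results that accompany Karlsson--Margulis). (4) Upgrade "almost every" and "$\lim = A$" to "for all but exponentially few words of length $n$, the displacement lies in $(nC_1', nC_2')$": the upper bound $C_2' = \max_i \|\gamma_i\|_{WP} + \varepsilon$ is immediate from the triangle inequality and holds for \emph{every} word; the lower bound follows from concentration for the subadditive/ergodic averages, using that $d_{WP}(y,\Gamma_n(y))$ is a $\max_i d_{WP}(y,\gamma_i y)$-Lipschitz function of the increments so that Azuma--Hoeffding gives exponential concentration around $nA$. (5) Translate back through Brock: $\Volume(M_{\Gamma_n})\in (nC_1,nC_2)$ with $C_1 = (A-\varepsilon)/K$, $C_2 = K(C_2')$, and finally divide by the universal constant $v_3$ (or its Gromov--Thurston analogue) to pass from volume to Gromov norm.

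The main obstacle is step (3), the positivity of the drift $A$. The Weil--Petersson metric is incomplete and negatively curved only in a weak (CAT(0), not CAT($-1$)) sense, and its completion has a complicated stratified boundary, so one cannot directly quote a Gromov-hyperbolic random-walk argument; the cleanest route is to exhibit an explicit Morse/contracting geodesic axis for a single pseudo-Anosov $g\in\langle S\rangle$ and argue that a definite proportion of the random walk's displacement is recorded along a quasi-geodesic through an orbit of $\langle g, hgh^{-1}\rangle$ for a suitable $h$, forcing linear escape; alternatively, since $\langle S\rangle$ non-elementary implies its action on the curve complex is non-elementary and the curve complex is Gromov hyperbolic, one gets positive drift on $\mathcal{C}(S)$ by Maher's theorem, and then the fact (Remark \ref{curvecomplexes}, Brock) that WP distance coarsely dominates pants-complex distance which dominates curve-complex distance gives $d_{WP}(y,\Gamma_n y)\gtrsim d_{\mathcal C}(\cdot,\cdot) \gtrsim n$, yielding $C_1>0$ directly and bypassing the need to prove $A>0$ from scratch. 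I would present this curve-complex comparison as the primary argument for the lower bound, with Karlsson--Margulis supplying the existence of the limit and the geodesic-tracking refinement. Everything else --- the CAT(0) property, the Lipschitz/concentration estimate, and the passage through Brock's inequality --- is routine.
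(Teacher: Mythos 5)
Your proposal follows essentially the same route as the paper: Brock's theorem (with Kojima's observation to speak of Gromov norm), the Karlsson--Margulis/subadditive ergodic theorem applied to the Weil--Petersson (completed CAT(0)) action for the existence of the drift, positivity of the drift from non-elementarity via Maher's linear progress in the curve complex (exactly the paper's Remark \ref{moremaher}, using that Weil--Petersson/pants distance dominates curve-complex distance), and the trivial triangle-inequality upper bound. Your extra Azuma--Hoeffding concentration step is not needed for the almost-sure statement but is harmless, so the argument matches the paper's.
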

In fact, since all the results apply without change to \emph{cosets} of our nonelementary group $\Gamma,$ we have the following corollary (which will be useful in the sequel):
\begin{corollary}
\label{volcor}
Let $x$ be a mapping class, and $\Gamma$ a nonelementary subgroup of the mapping class group. Then, if $\Gamma_n$ is a random product of length $n$ in $\Gamma,$ the Gromov norm of the mapping torus with monodromy $x\Gamma_n$ is almost surely contained in the interval $(nC_1, nC_2),$ where $C_1, C_2$ are the same constants as in the statement of Theorem \ref{volgrowth}.
\end{corollary}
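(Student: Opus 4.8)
The statement to prove is Corollary \ref{volcor}: if $x$ is a mapping class and $\Gamma$ is a nonelementary subgroup of the mapping class group, then for $\Gamma_n$ a random product of length $n$ in $\Gamma$, the Gromov norm of the mapping torus with monodromy $x\Gamma_n$ lies almost surely in $(nC_1, nC_2)$ with the same constants as in Theorem \ref{volgrowth}.

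The plan is to reduce this to Theorem \ref{volgrowth} by exploiting the coset structure. By Brock's theorem (together with Kojima's observation that the Gromov norm of $M_\psi$ is uniformly comparable to the Weil--Petersson translation distance $\|\psi\|_{WP}$ even in the non-hyperbolic case), it suffices to control $\|x\Gamma_n\|_{WP}$, the WP translation distance of the composite mapping class acting on Teichm\"uller space. First I would recall that translation distance of an isometry $\psi$ of a metric space based at a point $y$ satisfies $\|\psi\| = \lim_m \frac{1}{m} d(y, \psi^m(y))$, but more usefully for us, the one-sided displacement $d(y, \psi(y))$ differs from $\|\psi\|$ by a bounded amount only when $\psi$ is close to elliptic; so instead I would work directly with the Karlsson--Margulis drift. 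The key point is that the random walk $x\Gamma_n$ is a shifted version of the walk $\Gamma_n$: writing $\Gamma_n = \gamma_{i_1}\cdots\gamma_{i_n}$, the displacement $d(y, x\gamma_{i_1}\cdots\gamma_{i_n}(y))$ differs from $d(x^{-1}(y), \gamma_{i_1}\cdots\gamma_{i_n}(y))$ by $0$ (isometry invariance), and the latter differs from $d(y, \gamma_{i_1}\cdots\gamma_{i_n}(y))$ by at most $d(y, x^{-1}(y))$, a constant independent of $n$.

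Concretely, the steps are as follows. (1) Invoke Theorem \ref{karlssonmar} applied to $\Gamma$ acting on (an appropriate completion of) Teichm\"uller space with the Weil--Petersson metric, which is complete, uniformly convex and CAT(0) on the completion; this gives that $\frac{1}{n} d(y, \Gamma_n(y)) \to A$ almost surely, and the nonelementary hypothesis forces $A > 0$ by the non-amenability remark following Theorem \ref{karlssonmar}. (2) Observe the triangle-inequality sandwich $d(y, \Gamma_n(y)) - D \le d(y, x\Gamma_n(y)) \le d(y, \Gamma_n(y)) + D$ where $D = d(x^{-1}(y), y) = d(y, x(y))$ is a fixed constant, so $\frac1n d(y, x\Gamma_n(y)) \to A$ as well. (3) Pass from the displacement $d(y, x\Gamma_n(y))$ to the translation length $\|x\Gamma_n\|_{WP}$: since $\|\psi\|_{WP} \le d(y, \psi(y))$ always, the upper bound is immediate; for the lower bound one uses that a random product is almost surely pseudo-Anosov (hence its WP action is hyperbolic/loxodromic) and that the orbit of $\Gamma_n$ follows a geodesic ray — the second, non-trivial half of Karlsson--Margulis — so the displacement and the translation length agree to linear order. (4) Apply Brock's theorem and Kojima's extension to convert the linear bounds $(nA_1, nA_2)$ on $\|x\Gamma_n\|_{WP}$ into linear bounds on the Gromov norm, absorbing the universal constant $K$ into the constants; since Theorem \ref{volgrowth} is proved by exactly this chain of estimates with the additive constant $D$ equal to zero, the resulting $C_1, C_2$ are the same as there (the additive $\pm D$ is swallowed into the almost-sure ``eventually'' and does not affect the linear rate).

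The main obstacle I expect is step (3) — extracting a genuine lower bound on the \emph{translation length} (and hence, via Brock, on the volume of the hyperbolic pieces) rather than merely on the one-step displacement $d(y,x\Gamma_n(y))$. A single large displacement could in principle come from $y$ being far from the axis of $x\Gamma_n$ rather than from a large translation length, which is why the geodesic-tracking conclusion of Theorem \ref{karlssonmar} is essential: it says the orbit $\{(x\Gamma_n)(y)\}$ sublinearly tracks a WP-geodesic ray, which pins the translation length to be $\ge (A - \epsilon)n$ eventually. One must also be slightly careful that the WP metric is incomplete, so Teichm\"uller space must be replaced by its metric completion (the augmented Teichm\"uller space), on which the CAT(0) and uniform-convexity hypotheses of Theorem \ref{karlssonmar} are known to hold and on which the mapping class group still acts by isometries; this is standard and already implicit in the proof of Theorem \ref{volgrowth}, so for the corollary it requires no new work beyond citing it. Everything else is a soft consequence of isometry-invariance of the metric and the triangle inequality.
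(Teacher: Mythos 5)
Your argument is essentially the paper's: the paper gives no separate proof of the corollary, merely observing that the Karlsson--Margulis/Brock/Kojima chain behind Theorem \ref{volgrowth} applies without change to cosets, and your triangle-inequality bound $\lvert d(y,x\Gamma_n(y))-d(y,\Gamma_n(y))\rvert\le d(y,x(y))$ is precisely the justification for that observation. The care you take in step (3) about displacement versus translation length is the same issue the paper addresses only implicitly (via the geodesic-tracking part of Theorem \ref{karlssonmar} and Maher's linear progress in Remark \ref{moremaher}), so your route does not differ in substance.
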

\begin{remark}
\label{genhyp}
In fact, thanks to the results of Joseph Maher and the author (\cite{maher2011generic,rivin2008walks,rivin2009walks,MR2725508}), we know that a random element as in Theorem \ref{volgrowth} and Corollary \ref{volcor} is pseudo-Anosov, with probabiity approaching $1$ exponentially fast in $n,$ so the mapping torus is hyperbolic except in exponentially few exceptional cases. We can thus replace ``Gromov norm'' by ``hyperbolic volume''.
\end{remark} 
\begin{remark}
\label{moremaher}
The fact that the speed of divergence to infinity is non-zero also follows from Joseph Maher's theorem on speed of divergence to infinity for the curve complex distance (which is a lower bound on the pants distance and Weil-Petersson distance), see \cite{maher2010linear}
\end{remark}
\subsection{Volume the easy way}
\label{volumeeasy}
We note that the results of Namazi-Souto described in Section \ref{heegeom} apply (and are much easier) in the case of surface bundles over the circle, so the proof in that section works \emph{verbatim} here. This still uses the machine of bi-Lipschitz models, but none of the dynamics machinery used above. Of course, using the results of \cite{brocktori} in the opposite direction shows that the Weil-Petersson translation distance of random surface automorphisms has positive drift (without using the deep results of Maher).
\subsection{Rank of the fundamental group}
The fundamental group of the mapping torus of an automorphism of a surface of genus $g$ has a generating set of cardinality $2g+1,$
The following result has been claimed by Ian Biringer and Juan Souto:
\begin{theorem}[Biringer-Souto \cite{BiringerSouto}]
\label{bsthm}
Let $S$ be a surface of genus $g.$
There exists an $n_0,$ such that if $M_\phi$ be the mapping torus of a pseudo-Anosov mapping $\phi$ whose translation distance in the curve complex is greater than $n_0,$ then the rank of the fundamental group of $M_\phi$ is equal to $2g+1.$
\end{theorem}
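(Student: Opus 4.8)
The upper bound $\mathrm{rank}\,\pi_1(M_\phi)\le 2g+1$ holds for every $\phi$ and is the easy half: since $M_\phi$ fibers over $S^1$ with fiber $S$, we have $\pi_1(M_\phi)\cong\pi_1(S)\rtimes_{\phi_*}\mathbb{Z}$, and the $2g$ generators of $\pi_1(S)$ together with the stable letter generate $\pi_1(M_\phi)$. The content is the matching lower bound, which I would prove by contradiction: if the theorem failed, there would be a sequence of pseudo-Anosov maps $\phi_n$ on $S$ with curve-complex translation distance $d_{\mathcal C}(\phi_n)\to\infty$ but $\mathrm{rank}\,\pi_1(M_{\phi_n})\le 2g$. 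By Theorem~\ref{thurstonfiber} each $M_{\phi_n}$ is a closed hyperbolic $3$-manifold.

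The first step is to replace the abstract generating set by geometry. Using White's theorem on ranks of hyperbolic $3$-manifolds, together with the refinement for length-minimizing carrier graphs, each $M_{\phi_n}$ admits a carrier graph $\Gamma_n$ — a finite graph with $b_1(\Gamma_n)=\mathrm{rank}\,\pi_1(M_{\phi_n})\le 2g$ mapping $\pi_1$-onto $\pi_1(M_{\phi_n})$ — which has minimal total length in its homotopy class and whose intersection with the $\varepsilon$-thick part of $M_{\phi_n}$ has length at most a constant $L_0=L_0(g,\varepsilon)$. Choosing basepoints $x_n\in\Gamma_n$ in the thick part, the injectivity radius at $x_n$ is bounded below, so after passing to a subsequence the pointed manifolds $(M_{\phi_n},x_n)$ converge geometrically to a complete hyperbolic $3$-manifold $(M_\infty,x_\infty)$, and (this is the delicate ``minimal carrier graphs converge to carrier graphs'' step) the $\Gamma_n$ converge, after book-keeping the cycles whose length blows up, to a carrier graph of $M_\infty$.

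The geometric heart of the argument, and the step I expect to be the main obstacle, is the identification of $M_\infty$. Because $d_{\mathcal C}(\phi_n)\to\infty$, the manifold $M_{\phi_n}$ contains around $x_n$ an arbitrarily large region bi-Lipschitz to a long product block of the model manifold of a doubly degenerate hyperbolic structure on $S\times\mathbb{R}$; this uses the model-manifold technology of Minsky and of Brock--Canary--Minsky, together with the fact (via Remark~\ref{curvecomplexes} and Brock's theorem) that large curve-complex distance forces large Weil--Petersson distance and volume, which rules out the bounded-geometry-collapse and handlebody-type geometric limits. Hence $M_\infty\cong S\times\mathbb{R}$, so $\pi_1(M_\infty)\cong\pi_1(S)$ and $\mathrm{rank}\,\pi_1(M_\infty)=2g$.

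Finally I would extract the contradiction from the extra $\mathbb{Z}$-factor. The composite $\pi_1(\Gamma_n)\to\pi_1(M_{\phi_n})\to\mathbb{Z}$ is onto, so $\Gamma_n$ contains a cycle $\ell_n$ mapping to a generator of $\mathbb{Z}$; such a loop is transverse to the fibration and must cross all of the $\asymp d_{\mathcal C}(\phi_n)$ blocks of the model manifold, so its geodesic length goes to infinity. Thus $\ell_n$ is exactly one of the cycles that escapes to infinity in the geometric limit — it is forced out an end or down a lengthening thin tube — and the limiting carrier graph of $M_\infty$ therefore has first Betti number at most $b_1(\Gamma_n)-1\le 2g-1$; here minimality of the $\Gamma_n$ is what guarantees that no rank is spuriously lost elsewhere and that the limit graph still $\pi_1$-surjects onto $\pi_1(M_\infty)$. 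This yields $\mathrm{rank}\,\pi_1(M_\infty)\le 2g-1$, contradicting $\mathrm{rank}\,\pi_1(M_\infty)=2g$. Besides the identification of $M_\infty$, the points needing genuine care are this carrier-graph convergence book-keeping and the lower length bound on $\ell_n$, both of which should fall out of the block decomposition supplied by the model manifold.
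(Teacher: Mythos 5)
You should note first that the paper does not prove this statement at all: it is imported as a result claimed by Biringer and Souto \cite{BiringerSouto} and then used as a black box (together with Maher's genericity of large curve-complex translation length), so the only thing to compare your sketch with is the cited work itself. Your outline does follow the known Biringer--Souto strategy: the trivial upper bound $\mathrm{rank}\,\pi_1(M_\phi)\le 2g+1$ from the fibration, then a contradiction argument running through minimal-length carrier graphs in the sense of White, geometric limits based at points of the thick part, and the Minsky/Brock--Canary--Minsky model manifold to exploit the hypothesis $d_{\mathcal C}(\phi_n)\to\infty$. As a roadmap this is the right shape.

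As a proof, however, the two steps you defer are the entire content, and one of them is asserted incorrectly. The claim that ``minimality of the $\Gamma_n$ guarantees that the limit graph still $\pi_1$-surjects onto $\pi_1(M_\infty)$'' is not true as stated: a geometric limit of carrier graphs only carries \emph{some} finitely generated subgroup of $\pi_1(M_\infty)$, and nothing in length-minimality alone prevents rank from being lost into thin tubes or ends in a way that is unrelated to the $\mathbb{Z}$-cycle you want to discard; identifying the carried subgroup (and, when it has infinite index, invoking tameness and the covering theorem to control it) is precisely the technical heart of the Biringer--Souto argument, which they handle through a careful structural decomposition of minimal carrier graphs with controlled geometry, not by minimality as a one-line remark. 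Similarly, the identification $M_\infty\cong S\times\mathbb{R}$ needs an argument that the large product region produced by large curve-complex translation distance actually engulfs the chosen basepoints $x_n$ on $\Gamma_n$ — the carrier graph could a priori sit far from that region — and the lower length bound on the loop $\ell_n$ surjecting onto $\mathbb{Z}$ likewise requires the block-by-block estimate from the model, not just transversality to the fibration. So your proposal is a fair reconstruction of the strategy behind the cited theorem, but it is not a proof; in the context of this paper the statement is, and should remain, a citation.
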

Theorem \ref{bsthm} together with J. Maher's result \cite{maher2011generic} shows that a generic mapping torus of automorphisms in \emph{any} nonelementary subgroup $\Gamma$ (or a coset thereof) of the mapping class group has maximal rank of fundamental group. Unfortunately, while by the work of Kapovich and Weidmann it is possible to decide what the rank is, there is no remotely efficient algorithm to do so (and since, as discussed above), we can construct a coset of a nonelementary subgroup of the mapping class group in such a way as to get prescribed homology, we know that the abelianization tells us very little about the rank of the fundamental group.
\subsection{Injectivity radius}
\label{injrad}
It is known that a generic (in almost any sense) measured lamination does not have bounded complexity, and this implies that the injectivity radius of a generic mapping torus will go to zero with the length of the monodromy. What is less clear is \emph{how fast} it will go to zero. 
\begin{question}
\label{injq}
What is the expected injectivity radius (or systole length, in the punctured case) of the mapping torus of a random automorphism of length $n$ in a coset of a nonelementary subgroup $\Gamma$ of the mapping class group of a surface of genus $g.$
\end{question}
It turns out that one can get intuition for this question from the case of the punctured torus (which is not a closed surface, but this is not relevant for our purpose). This has been studied in the classical work by Yair Minsky \cite{minskypunctured}. It follows from Equation 4.4 in Minsky's paper that:
\begin{theorem}
\label{minskythm}
If the monodromy $\phi\in \SL(2, \integers)$ has the form $\begin{pmatrix}a & b\\ c& d\end{pmatrix},$ and let $[d_1, \dotsc, d_k]$ be the continued fraction expansion of $\frac{a}{b}.$ Then, the systole length of $M_\phi$ is bounded above by \[\frac{c}{(\max_{1\leq i \leq k} d_i)^2}.\]
\end{theorem}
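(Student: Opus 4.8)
The plan is to reduce the statement to Minsky's geometric analysis of once-punctured torus bundles by translating between the arithmetic of $\phi$ and the combinatorics of the Farey tessellation. I will assume throughout that $\phi$ is Anosov, i.e. $|a+d|>2$, so that $M_\phi$ is a one-cusped hyperbolic $3$-manifold by Theorem \ref{thurstonfiber}; in the parabolic, elliptic and $\pm I$ cases $M_\phi$ is Euclidean or Seifert-fibered and the statement is vacuous or immediate. Since $ad-bc=1$ forces $\gcd(a,b)=1$, the slope $a/b$ has a well-defined finite continued fraction $a/b=[d_1,\dots,d_k]$, and this expansion records, block by block, how $\phi$ factors into powers of the two positive Dehn twists $R$ and $L$ about the standard pair of once-intersecting simple closed curves on the fiber --- equivalently, it records the successive pivots of the edge path that $\phi$ determines in the Farey graph. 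Thus each $d_i$ is a maximal block of $d_i$ consecutive twists about one of the two standard curves, and in particular there is an essential simple closed curve $\gamma$ on the fiber about which $\phi$ pivots exactly $d_{i_0}:=\max_i d_i$ times.

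Next I would invoke Minsky's description of the geometry of $M_\phi$ in terms of this pivot sequence. Minsky builds a bi-Lipschitz model of the bundle from blocks indexed by the successive pivots, and Equation 4.4 of \cite{minskypunctured} bounds the complex translation length of the geodesic associated to a pivot of multiplicity $n$; its real part --- the actual length of that closed geodesic in $M_\phi$ --- is at most a constant multiple of $1/n^{2}$. Applying this to the pivot of multiplicity $d_{i_0}$ produces a closed geodesic in $M_\phi$ of length at most $c/d_{i_0}^{2}$, where $c$ is obtained by comparing Minsky's explicit constant with the data of $\phi$; since the systole of $M_\phi$ is by definition the length of its shortest closed geodesic, the asserted bound follows at once.

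The main obstacle is the dictionary rather than any hard analysis. One must fix orientation and basepoint conventions carefully so that it is genuinely the continued fraction of $a/b$ --- rather than that of $a/c$, of $d/b$, or a reversed expansion --- that records the itinerary through the Farey graph, and one must check that a block of $d_i$ Dehn twists in the algebraic factorization is precisely what Minsky calls a pivot of multiplicity $d_i$, so that Equation 4.4 applies with the right exponent; forcing the constant into the stated shape $c/(\max_i d_i)^{2}$ is then a matter of quoting (4.4) verbatim. A useful simplification is that we want only an \emph{upper} bound on the systole, i.e. the existence of a single short geodesic, so none of the lower bounds, two-sided estimates, or rigidity theorems that make up the bulk of \cite{minskypunctured} are needed --- we use (4.4) in one direction only, for the single largest pivot. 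As a consistency check, the same $n^{-2}$ scaling is predicted by the Neumann--Zagier / Dehn-surgery heuristic: presenting a block of $n$ twists about $\gamma$ as a drilled-and-$1/n$-refilled cusp makes the core geodesic of the resulting Margulis tube have length $\asymp n^{-2}$.
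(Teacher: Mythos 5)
Your proposal is correct and follows essentially the same route as the paper, which simply derives the bound from Equation 4.4 of Minsky's paper \cite{minskypunctured} via the dictionary between the continued fraction expansion of the monodromy and the pivot sequence of short geodesics in the bundle. Your write-up in fact supplies more detail (the Anosov reduction, the twist-block/pivot correspondence, and the convention check on which ratio's continued fraction is used) than the paper's one-line citation.
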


Now, in if we look at the generating set $\tau, \sigma$ for $\SL(2, \integers,$ where
\[
\tau = \begin{pmatrix}1 & 1\\ 0 & 1\end{pmatrix}, \quad \sigma = \begin{pmatrix}0 & -1\\ 1 & 0\end{pmatrix},
\]
we see that the convergents (or ``digits'') of the continued fraction correspond to \emph{runs} of $\tau$ in the monodromy words. By the celebrated Erd\H{o}s-Renyi Theorem \cite{erdosrenyiruns} (see also \cite{reveszwalks}[Chapter 7], the length of the maximal such run in a string of length $n$ is almost surely equal to  $c \log n,$ so for that generating set we see that the injectivity radius for a random mapping torus with monodromy of length $n$ is bounded above by $c/\log^2 n,$ and we conjecture that this is true in general:
\begin{conjecture}
The injectivity radius of a random mapping torus of a surface $S$ decays as $1/\log^2 n.$
\end{conjecture}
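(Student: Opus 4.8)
We outline a plausible route; it is the higher-genus analogue of the punctured-torus discussion above, with Minsky's continued-fraction formula (Theorem~\ref{minskythm}) upgraded to the ending-lamination model of Minsky and Brock--Canary--Minsky, and the Erd\H{o}s--R\'enyi estimate on the longest run replaced by an estimate on the largest subsurface-projection coefficient carried by a random word. Throughout, $\Gamma_n$ is a random length-$n$ word in a (coset of a) nonelementary subgroup $\Gamma\le\mcg(S)$; we first treat a generating set containing a Dehn twist, and address generating-set dependence at the end (only the multiplicative constant, not the exponent, should depend on $S$ and on the generators).

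\emph{Step 1: reduction to one statistic.} By Remark~\ref{genhyp}, $\Gamma_n$ is pseudo-Anosov with probability $1-O(e^{-cn})$, so $M_{\Gamma_n}$ is hyperbolic (Theorem~\ref{thurstonfiber}); its infinite cyclic cover is a doubly degenerate hyperbolic structure on $S\times\reals$ whose two ending laminations are the stable and unstable foliations $\lambda^{\pm}$ of $\Gamma_n$. By the model-manifold theorem together with Rafi's length estimates, a simple closed curve $\gamma$ on the fiber is short in $M_{\Gamma_n}$ precisely when a quantity $t_\gamma$, assembled from the subsurface coefficients $d_W(\lambda^+,\lambda^-)$ over subsurfaces $W$ with $\gamma\subset\partial W$ and dominated by the annular (``twisting'') coefficient around $\gamma$, is large, and then $\ell_\gamma(M_{\Gamma_n})\asymp 1/t_\gamma^{\,2}$, with constants depending only on $S$. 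Writing $Q_n:=\max_\gamma t_\gamma$, it follows that the systole of $M_{\Gamma_n}$ is comparable to $1/Q_n^{\,2}$, so the conjecture reduces to the statement $Q_n\asymp\log n$ almost surely.

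\emph{Step 2: $Q_n=\Theta(\log n)$.} The upper bound $Q_n=O(\log n)$ is a Borel--Cantelli estimate modeled on Erd\H{o}s--R\'enyi: if some coefficient $d_W(\lambda^+,\lambda^-)\ge k$, then $\Gamma_n$ contains a consecutive sub-block of length $\gtrsim k$ whose action moves a fixed marking of $S$ only a bounded distance in the curve complex (it ``stalls'' near $\partial W$); by the positive drift of the random walk on the curve complex (Maher) together with the exponential mixing behind Theorems~\ref{equidist} and~\ref{irredthm}, such a stalled block of length $k$ occurs somewhere in a word of length $n$ with probability at most $Cne^{-ck}$, which is summable once $k\ge A\log n$. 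For the lower bound $Q_n\gtrsim\log n$ it suffices to produce one large coefficient: if $T_c$ is one of the generators, the classical Erd\H{o}s--R\'enyi theorem \cite{erdosrenyiruns} guarantees a maximal run $T_c^{\,m}$ with $m\ge c\log n$ almost surely, and a routine subsurface-projection estimate shows that the twisting of $\lambda^+$ about $c$ relative to $\lambda^-$ equals $m+O(1)$, unless the complementary parts of the word also twist heavily about $c$, an event of vanishing probability; hence $t_c\ge c\log n$. (When $\Gamma$ contains no suitable reducible generator one appeals instead to the recurrence and subsurface-coefficient estimates for random walks in the mapping class group due to Gadre, Maher, Sisto and Tiozzo, to the same effect.) Combining with Step~1 gives injectivity radius $=\Theta(1/\log^2 n)$ almost surely.

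\emph{Main obstacle.} The bound $\lesssim 1/\log^2 n$ is close to what the text already sketches: even for the punctured torus it needs only the classical run theorem and Minsky's inequality. The real work is the matching lower bound on the injectivity radius, which has two delicate components. First, the model-theorem length estimates must be used in a form uniform over all monodromies of word-length $n$; since the fiber genus is fixed this is a matter of tracking the dependence of Minsky's and Rafi's constants rather than of new geometry, but it must include ruling out short geodesics that are not ``seen'' by any single large projection coefficient. Second, and harder, the Erd\H{o}s--R\'enyi-type tail bound $\Pr[\,Q_n\ge k\,]\le Cne^{-ck}$ for the $\mcg$-action is not classical: it is an effective ``no long stalls'' statement for the random walk on the curve complex with an \emph{explicit} exponential rate, and while the spectral-gap inputs behind Theorems~\ref{equidist} and~\ref{irredthm}, or Maher's acylindricity machinery, are the natural raw material, distilling them into a clean Borel--Cantelli estimate, and pinning down the leading constant of $\log n$ needed for the sharp two-sided conclusion, is the crux.
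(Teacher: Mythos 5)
First, note that the statement you are proving is stated in the paper as a \emph{conjecture}: the paper offers no proof, only the punctured-torus heuristic immediately preceding it (Minsky's inequality, Theorem~\ref{minskythm}, bounding the systole by $c/(\max_i d_i)^2$, combined with the Erd\H{o}s--R\'enyi theorem on maximal runs of the twist generator $\tau$), together with the remark that Minsky's methods generalize to higher genus and that ``the only question is whether there are any accidental short loops.'' Your Step~2 lower bound on $Q_n$ (long runs of a twist generator forcing a twisting coefficient of size $c\log n$, hence a curve of length $\lesssim 1/\log^2 n$) is essentially the paper's own heuristic, upgraded from the punctured torus to subsurface projections; that part of your outline is the direction the paper already sketches.

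The genuine gap is that the remaining direction --- which is exactly the open part the paper flags as ``accidental short loops'' --- is not proved in your proposal but only described as a program, and you say so yourself in your ``Main obstacle'' paragraph. Two things are missing and neither is routine. (i) The tail estimate $\Pr[\,Q_n\ge k\,]\le Cne^{-ck}$ for the largest subsurface-projection coefficient along the walk does not follow from Theorems~\ref{equidist} and~\ref{irredthm} (those are statements about the symplectic image, hence blind to Torelli behavior and to subsurface projections) nor directly from Maher's positive drift; it requires the decay-of-projection machinery for random walks on the mapping class group (Sisto--Taylor, Gadre--Maher type results), which you invoke by name but do not carry out, and without which the Borel--Cantelli step has no input. (ii) The geometric translation $\ell_\gamma\asymp 1/t_\gamma^2$ is stated too coarsely: in the Minsky/Rafi length estimates a large \emph{annular} coefficient of size $t$ produces a curve of length comparable to $1/t^2$, while a large \emph{non-annular} coefficient of size $t$ produces a curve only of length comparable to $1/t$; so declaring $Q_n$ to be ``dominated by the annular coefficient'' is precisely what must be proved (one must show the largest annular and non-annular coefficients are both $\Theta(\log n)$ and then run the two cases through the length formula separately), and the constants must be uniform in the monodromy. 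As it stands, your text is a reasonable research plan that matches and extends the paper's heuristic, but it does not close the conjecture, and it would be misleading to present it as a proof.
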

\begin{remark}
It is clearly enough to consider loops in the ``surface'' direction. By the general Lipschitz model philosophy of Minsky, we know that a substring of the form $\phi^k$ in the monodromy looks like the cyclic cover of order $k$ of the mapping torus of $\phi,$ and thus contributes $O(k)$ to the length of the meridian loop.
\end{remark}
In fact, Minsky's methods generalize to higher genus (where our $\tau$ is now replaced by a Dehn twist around some curve), so the only question is whether there are any accidental short lo
\subsection{Bottom of the spectrum of the Laplacian}
\label{lapspec}
Consider the mapping torus $M_\phi$ of a generic element of length $n$ (with respect to your favorite coset of your favorite nonelementary subgroup of the mapping class group), and consider $\lambda_1(M_\phi).$ What can we say? On the one hand, by Schoen's result \cite{schoeneig}, for any hyperbolic manifold 
$N^3,$ we know that there exists a universal (and explicit) constant $C$ such that 
\[\lambda_1(N^3) > \frac{C}{V(N^3)}. \] Schoen's argument uses Cheeger's inequality.

On the other hand, it is a celebrated result of P. Buser \cite{busereig}, that for any such $N^3,$ we have an upper bound:
\[
\lambda_1(N^3) < 4 h(N^3) + 10 h^2(N^3),
\]
where $h(N^3)$ is the Cheeger constant of $N^3.$
Another ingredient is the following estimate due to M. Lackenby (see \cite{lackenby2006heegaard,lackenby2010finite},) based on the work of Pitts-Rubinstein \cite{pitts1987applications} which is still not entirely written up. Luckily, the requisite bound can be produced using harmonic sweepouts, as in the beautiful paper of J. Hass, A. Thompson, and W. P. Thurston \cite{hassthompthur}):
\begin{theorem}[Lackenby \cite{lackenby2006heegaard}]
\label{lackthm}
The Cheeger constant of a hyperbolic $3$-manifold $N^3$ fibered over the circle, with fiber a surface of genus $g$ is at most
\[
\frac{16 \pi g}{V(N^3}
\]
\begin{remark}
The following explicit two-sided bound were shown by D. Futer, E. Kalfagianni, and J. Purcell:
\begin{theorem}[\cite{fkp}]
Let $M^3$ be a hyperbolic manifold of volume $V,$ and Heegaard genus $g.$ Then
\[
\dfrac{\pi^2}{2^{50}V^2} \leq \lambda_1(M) \leq 32\pi \dfrac{g-1}V + 640 \pi^2 \dfrac{(g-1)^2}{V^2}.
\]
\end{theorem}
\end{remark}
\end{theorem}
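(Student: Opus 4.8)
\textbf{Proof proposal for Theorem \ref{lackthm} (Lackenby's bound on the Cheeger constant).}

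The plan is to produce an explicit isoperimetric surface in the fibered manifold $N^3$ directly from the fibration. Write $N^3 = S_g \times [0,1]/\!\sim$ as the mapping torus of a pseudo-Anosov $\phi:S_g\righttoleftarrow$, so each level set $\Sigma_t = S_g\times\{t\}$ is an embedded surface of genus $g$ that separates $N^3$ into two pieces $A_t$ and $B_t$ (cutting the circle direction at the two points $t$ and $0$; alternatively take the union of two fibers $\Sigma_0\cup\Sigma_t$, which genuinely separates). Picking $t$ near $1/2$ so that both $A_t$ and $B_t$ have volume at least $V(N^3)/2$ up to a constant, the Cheeger constant is bounded by
\[
h(N^3)\;\le\;\frac{\area(\partial A_t)}{\min(\vol A_t,\vol B_t)}\;\le\;\frac{2\area(\Sigma_0\cup\Sigma_t)}{V(N^3)}.
\]
So the whole game reduces to bounding the area of a fiber in the hyperbolic metric.

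The key step is therefore: \emph{a genus $g$ fiber in a hyperbolic mapping torus can be isotoped to have hyperbolic area at most $4\pi(g-1)$ (or at worst a small multiple of $\pi g$)}. The clean way to get this is a minimal-surface / harmonic-map argument: the fiber $\Sigma$ is incompressible (it injects on $\pi_1$ since $N^3$ fibers over $S^1$ with fiber $\Sigma$), so by Freedman–Hass–Scott one may isotope $\Sigma$ to a least-area surface in its isotopy class, and a least-area surface in a hyperbolic $3$-manifold has curvature $\le -1$, whence by Gauss–Bonnet $\area(\Sigma)\le 2\pi|\chi(\Sigma)| = 4\pi(g-1)$. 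Since we used two fibers in the separating version, we get $\area(\partial A_t)\le 8\pi(g-1)$, and feeding this into the displayed isoperimetric inequality yields $h(N^3)\le 16\pi(g-1)/V(N^3) \le 16\pi g/V(N^3)$, exactly the stated bound. (If one prefers to avoid the geometric-measure-theory black box, the cited harmonic sweepout technology of Hass–Thompson–Thurston gives the same area bound: sweep out by the $\Sigma_t$, replace each by a harmonic-map representative of bounded area, and extract the isoperimetric slice — this is the ``Luckily\dots'' remark in the statement.)

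The main obstacle is the volume-splitting bookkeeping: one needs that \emph{some} slice $\Sigma_t$ actually cuts $N^3$ into two pieces each of volume comparable to $V(N^3)$, rather than one tiny sliver and one huge piece. This is handled by a pigeonhole / intermediate-value argument on the function $t\mapsto \vol(A_t)$, which is continuous and runs from $0$ to $V(N^3)$ as $t$ traverses $[0,1]$, so at some $t$ it equals $V(N^3)/2$; one then fixes that slice and applies the area bound to it. A secondary subtlety is that after isotoping the fiber to least-area position it is no longer literally a level set, so the two complementary regions must be defined via the isotoped surface rather than the product structure; but incompressibility guarantees the isotopy stays embedded and still separates, so the volume estimate is unchanged up to the isotopy, and the argument goes through. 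The only genuinely nontrivial input is the incompressibility of the fiber together with the existence of least-area (or bounded-area harmonic) representatives, both of which are standard for surface bundles.
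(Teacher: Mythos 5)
The paper offers no proof of this statement at all --- it is quoted from Lackenby \cite{lackenby2006heegaard}, with a pointer to the Pitts--Rubinstein \cite{pitts1987applications} and Hass--Thompson--Thurston \cite{hassthompthur} sweepout technology --- so your reconstruction is being compared with the standard argument rather than with anything in the text. Your skeleton is the right one: a single fiber does not separate, so one takes two fibers $\Sigma_0\cup\Sigma_t$; an intermediate-value argument bisects the volume; and the Gauss equation plus Gauss--Bonnet bounds the area of a minimal (or harmonic) representative of the fiber by $2\pi\abs{\chi}=4\pi(g-1)$, which feeds through to $16\pi(g-1)/V\le 16\pi g/V$.

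The gap is in the step you label a ``secondary subtlety'' and then dismiss. If you isotope $\Sigma_0$ and $\Sigma_t$ \emph{independently} to least-area position, you lose all control over the volumes of the complementary regions: both least-area representatives lie in the same isotopy class (that of the fiber), and they can coincide with, or be arbitrarily close to, the same minimal surface, in which case one complementary piece has volume near $0$ and the Cheeger quotient is not bounded by your computation. The assertion that ``the volume estimate is unchanged up to the isotopy'' is false --- an isotopy of a separating surface can move an arbitrary amount of volume across it, and incompressibility does nothing to prevent this. The correct repair is precisely the argument you relegate to a parenthetical: one must straighten the \emph{entire one-parameter family} of fibers at once (a sweepout by harmonic or min-max minimal representatives, each of area at most $2\pi\abs{\chi}$, as in \cite{hassthompthur}), so that the area bound and the volume-bisection argument apply to the \emph{same} family; only then does the intermediate-value step on $t\mapsto\vol(A_t)$ produce a slice that simultaneously has small area and bisects the volume. (One must also handle the fact that the straightened surfaces need not be embedded, which is where the degree-theoretic bookkeeping in those references comes in.) As written, the main line of your proof does not close; the parenthetical is the proof.
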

Putting all of these results together with Corollary \ref{volcor}, we get:
\begin{corollary}
\label{lambdacor}
The bottom eigenvalue $\lambda_1(N)$ of the Laplacian for a mapping torus with random monodromy of length $N$ satisfies \[
\frac{C}{N^2} < \lambda_1(N) < \frac{C}{N},
\]
for some $C > 0,$ depending on the generators.
\end{corollary}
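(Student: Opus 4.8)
The plan is to sandwich $\lambda_1$ between two volume-driven estimates, using Corollary \ref{volcor} to convert the (almost sure) linear volume bounds into bounds in $N$, with Remark \ref{genhyp} supplying hyperbolicity so that the analytic inequalities quoted above actually apply.

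First, for the lower bound, I would note that by Remark \ref{genhyp} a random monodromy $\Gamma_N$ of length $N$ is pseudo-Anosov almost surely for $N$ large, so by Thurston's Theorem \ref{thurstonfiber} the mapping torus $M_{\Gamma_N}$ carries a finite-volume hyperbolic structure. Schoen's inequality then gives $\lambda_1(M_{\Gamma_N}) > C_0 / V(M_{\Gamma_N})$ for a universal $C_0 > 0$; substituting the almost sure upper bound $V(M_{\Gamma_N}) < C_2 N$ from Corollary \ref{volcor} yields $\lambda_1(M_{\Gamma_N}) > C_0/(C_2 N)$, which in particular exceeds $C/N^2$ for $N$ large. (One could equally feed $V(M_{\Gamma_N}) < C_2 N$ into the explicit Futer--Kalfagianni--Purcell lower bound $\lambda_1 \geq \pi^2/(2^{50}V^2)$ to obtain the stated $1/N^2$ shape directly, at the cost of a worse constant.)

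Second, for the upper bound, I would run the chain Buser $\to$ Lackenby $\to$ volume. Buser's theorem gives $\lambda_1(M_{\Gamma_N}) < 4 h(M_{\Gamma_N}) + 10 h(M_{\Gamma_N})^2$ with $h$ the Cheeger constant; since $M_{\Gamma_N}$ is fibered over $S^1$ with fiber of genus $g$ by construction, Lackenby's Theorem \ref{lackthm} gives $h(M_{\Gamma_N}) \leq 16\pi g / V(M_{\Gamma_N})$. Applying now the almost sure lower bound $V(M_{\Gamma_N}) > C_1 N$ from Corollary \ref{volcor} gives $h(M_{\Gamma_N}) \leq 16\pi g/(C_1 N)$, and substituting back into Buser's inequality gives $\lambda_1(M_{\Gamma_N}) < 64\pi g/(C_1 N) + O(1/N^2) < C/N$ for $N$ large, with $C$ depending only on $g$ and the generating set $S$ (through $C_1$). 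Combining the two halves is then the corollary, with the caveat that the two occurrences of ``$C$'' denote different constants, both depending only on the generators, and that the statement holds almost surely (equivalently, with probability tending to $1$) rather than for every word.

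The only genuine subtlety — and the step I would be most careful about — is the requirement that Schoen's, Buser's and Lackenby's inequalities be applied to a manifold that is truly hyperbolic and truly fibered over the circle with fiber of the fixed genus $g$; this is exactly what Remark \ref{genhyp} (Maher's theorem) provides, and it is also the reason the conclusion is phrased probabilistically. It is worth emphasizing that the gap between the $1/N$ upper bound and the $1/N^2$ lower bound here is precisely the gap between Buser's and Schoen's inequalities, and closing it would require genuinely new information about the Cheeger constants of these particular mapping tori, which the present argument does not yield.
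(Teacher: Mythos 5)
Your proposal is correct and is essentially the paper's own argument: the paper's ``proof'' consists precisely of ``putting all of these results together with Corollary \ref{volcor},'' i.e.\ Schoen (or the Futer--Kalfagianni--Purcell bound) plus the volume upper bound for the lower estimate, and Buser combined with Lackenby's Theorem \ref{lackthm} plus the volume lower bound for the upper estimate, exactly as you assemble them, with Remark \ref{genhyp} supplying hyperbolicity. The only footnote is that Schoen's inequality is really of the form $C/V^2$ (consistent with the FKP and White statements quoted in the paper), which is why the corollary's lower bound is $C/N^2$ rather than $C/N$ --- a point your FKP alternative already covers.
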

Corollary \ref{lambdacor} is obviously not entirely satisfying, and the author conjectures (following some conversations with Juan Souto):
\begin{conjecture}
With notation as above, 
\[
\lambda_1(N) \sim \frac{C}{N}
\]
\end{conjecture}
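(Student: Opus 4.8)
Most of the conjecture's content is the lower bound $\lambda_1(N)\ge cN^{-1}$: the matching upper bound $\lambda_1(N)\le CN^{-1}$ is already Corollary~\ref{lambdacor}, obtained by inserting the linear volume growth of Corollary~\ref{volcor} into Lackenby's estimate $h(N)\le 16\pi g/V(N)$ (Theorem~\ref{lackthm}) and then into Buser's inequality. So one wants to improve the Cheeger-type bound $\lambda_1(N)\ge cN^{-2}$ of Corollary~\ref{lambdacor} by one power of $N$, and since Cheeger's inequality is genuinely lossy in this regime (it squares), the plan is to argue variationally on a bi-Lipschitz model of the mapping torus rather than through $h(N)$.

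The steps I would attempt are: (i) use Minsky's model-manifold technology, or the Namazi--Souto models invoked in Section~\ref{heegeom} (which apply verbatim to surface bundles), to present $M_\phi$, up to bounded bi-Lipschitz distortion, as a cyclic chain of $\asymp n$ blocks of volume and diameter $O(1)$, interspersed with Margulis tubes about the few short pinched curves; that the number of blocks is $\asymp n$ follows from linear drift of the random walk in the pants/curve complex (Maher~\cite{maher2010linear}, Theorem~\ref{karlssonmar}). (ii) Feed the first eigenfunction $f$ (normalized by $\int_N f=0$, $\|f\|_2=1$, $\int_N|\nabla f|^2=\lambda_1$) into a local Poincar\'e inequality on each block together with a trace/Poincar\'e inequality on each Margulis tube, comparing $\int_N|\nabla f|^2$ with the Dirichlet energy on the cyclic graph $C_n$ of the block-averaged function; by itself this only recovers $\lambda_1\ge cn^{-2}$, since $\lambda_1(C_n)\asymp n^{-2}$. (iii) Extract the extra power of $n$ from genericity: for a random product the successive blocks and gluing maps are ``aperiodic'', so the effective one-dimensional operator along $C_n$ is of random Schr\"odinger type, whose ground-state energy should be of order $n^{-1}$ rather than $n^{-2}$, i.e.\ there is no near-translation-invariant low-frequency mode. (For a \emph{non}-generic monodromy such as the $n$-th power $\phi_0^n$ of a fixed pseudo-Anosov, $M_{\phi_0^n}$ is literally an $n$-fold cyclic cover and does carry such a mode, which is why $\lambda_1\asymp n^{-2}$ there, so genericity must be used essentially.) (iv) With $\lambda_1(N)\asymp N^{-1}$ in hand, upgrade to the asymptotic $N\lambda_1(N)\to C$, presumably only in distribution and with $C$ governed by the Lyapunov data of the generating set as in Theorem~\ref{guivarchthm}, by combining the model with ergodicity of the block process.

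Step (iii) is the main obstacle, and a serious one. The composition $M_\phi\to S^1$ with $\cos(2\pi\,\cdot\,)$ is an honest test function, so $\lambda_1(N)\lesssim \|d\theta\|_{L^2}^2/\operatorname{Vol}(N)$ for the harmonic $1$-form $d\theta$ dual to the fibration; in the coarse product picture $M_\phi\approx S\times_\phi[0,L]$ with $L\asymp n$ and $\operatorname{Area}(S)\asymp 1$ this gives $\|d\theta\|_{L^2}^2\asymp 1/n$, hence $\lambda_1(N)\lesssim 1/n^2$ --- the opposite of the conjecture. The conjecture therefore forces, and is essentially equivalent to, the statement that the fibers of a random mapping torus of word length $n$ have area growing like $\sqrt n$ (equivalently, that the harmonic fibration form has $L^2$-norm of order $1$, not $1/n$) rather than staying bounded as their pleated representatives do by Gauss--Bonnet. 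Proving that kind of large-scale geometric statement --- or, for that matter, deciding whether it holds at all --- seems to lie well beyond the coarse bi-Lipschitz models currently available, and that is where I expect the heart of the matter, and any doubt about the stated exponent, to be.
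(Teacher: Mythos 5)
You were asked to prove something the paper itself does not prove: the statement is a conjecture (attributed to conversations with Souto), and all the paper establishes is the two-sided bound of Corollary \ref{lambdacor}, $C_1/N^2<\lambda_1<C_2/N$, coming from Schoen on one side and Lackenby--Buser together with the linear volume growth of Corollary \ref{volcor} on the other. So there is no proof in the paper to compare yours with; the question is whether your outline could close the gap between the two exponents, and as written it cannot. The fatal step is (iii). Your comparison object is the Neumann-type Laplacian on a cyclic chain of $\asymp n$ uniformly bounded-geometry blocks, and its first nonzero eigenvalue is $\asymp n^{-2}$ no matter how ``aperiodic'' the blocks and gluings are: the bound $\lambda_1\lesssim n^{-2}$ is a Rayleigh-quotient \emph{upper} bound, witnessed by a test function varying like $\cos(2\pi k/n)$ in the block index, and upper bounds of this kind are insensitive to disorder. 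The random-Schr\"odinger analogy imports intuition from operators with a potential, where randomness lifts or localizes low-lying states; here there is no potential, and the low-frequency mode along the circle direction exists for every monodromy, generic or not, as long as the model is a cyclic chain of $\asymp n$ blocks of bounded size. The only mechanism that could push $\lambda_1$ up to order $n^{-1}$ is cross-sections of area growing like $\sqrt{n}$, and, as you observe yourself via Gauss--Bonnet for pleated fibers, that is precisely what does not happen.

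In fact your closing computation is the valuable part of the submission, and you should take it more seriously than you do: pulling a circle coordinate back through the bi-Lipschitz model (whose constants depend only on the genus) gives $\lambda_1\lesssim 1/n^2$ up to controlled corrections from the Margulis tubes, which have small cross-section and sit over single blocks, so they do not obstruct the construction. In other words, the evidence you assemble indicates that the lower end of Corollary \ref{lambdacor} is the sharp one and the conjectured exponent is wrong; this $1/\mathrm{vol}^2$ behavior for random models is also what later work of Hamenst\"adt and Viaggi on small eigenvalues of random $3$-manifolds points to. The productive move is therefore not to repair step (iii) but to make your last paragraph quantitative, which would yield a disproof of the conjecture rather than a proof.
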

It should be noted that if we pick some $\epsilon$ and $r$ and consider hyperbolic $3$-manifolds $N^3$ of injectivity radius bounded below by $\epsilon$ and the rank of fundamental group bounded above by $r,$ then, in the beautiful paper \cite{white2013spectral}, Nina White shows: \[\frac{C_1}{V^2(N^3)} < \lambda_1(N^3) < \frac{C_2}{V^2(N^3)}.\]
In our situation, the rank is bounded, but the injectivity radius is not.
\subsection{Dilatation of a random pseudo-Anosov}
Since the log dilatation of a random pseudo-Anosov automorphism is just the top Lyapunov exponent of its derivative cocycle (this follows from the ergodicity of the action), it follows from the Oseledec ergodic theorem that the log dilatation of a random product grows linearly with the length. From this, we get the following result:
\begin{fact}
\label{dilfact} For a pseudo-Anosov mapping class $\phi \in \mcg_g,$obtained as a random word of length $N$ in the generators of a non-elementary subgroup $\Gamma$ of $\mcg_g,$ or a random element in a fixed coset $h\Gamma$ of such a subgroup, the ratio between the log of the dilatation of $\phi$ and the hyperbolic volume of $M_\phi$ lies, with probability approaching $1$ as $N \rightarrow \infty,$ in some interval $[C_1, C_2], $ where $C_1 > 0.$ If Conjecture \ref{volconj} holds, we can take $C_1 = C_2.$
\end{fact}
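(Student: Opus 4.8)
The plan is to combine two linear-growth estimates and divide. On one side, $\log\lambda(\phi)$ grows linearly in $N$: as noted just above, $\log\lambda(\phi)$ is (up to an $o(N)$ error) the value of the derivative cocycle of the random product acting on the space of projective measured foliations $\mathcal{PMF}$, a stationary measure for $\Gamma$ (or for the coset $h\Gamma$) being supported on generic, uniquely ergodic foliations on which this cocycle genuinely records the scaling of the transverse measure; the Oseledec ergodic theorem (equivalently the Furstenberg--Kesten theorem) then gives $\frac1N\log\lambda(\phi_N)\to\ell$ in probability, with $\ell>0$ by non-elementarity. This step is unaffected by translating the walk by a fixed $h$, since the top Lyapunov exponent depends only on the step distribution and not on the base point.

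On the other side, Corollary \ref{volcor} supplies constants $0<C_1'<C_2'$, depending only on the generating set, such that $\Volume(M_{\phi_N})\in(C_1'N,\,C_2'N)$ with probability approaching $1$, the same constants covering the coset case; and Remark \ref{genhyp} lets us discard the exponentially rare words for which $\phi_N$ fails to be pseudo-Anosov, so that $M_{\phi_N}$ is genuinely hyperbolic. Intersecting the three events $\{\phi_N\text{ pseudo-Anosov}\}$, $\{\tfrac1N\log\lambda(\phi_N)\in(\ell/2,2\ell)\}$, and $\{\Volume(M_{\phi_N})/N\in(C_1',C_2')\}$ --- an event of probability tending to $1$ --- we obtain on it
\[
\frac{\ell}{2C_2'}\;\le\;\frac{\log\lambda(\phi_N)}{\Volume(M_{\phi_N})}\;\le\;\frac{2\ell}{C_1'},
\]
so the claim holds with $C_1=\ell/(2C_2')>0$ and $C_2=2\ell/C_1'$. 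The passage from the almost-sure statements for the bi-infinite walk to these ``probability $\to 1$'' statements for uniformly random length-$N$ words is the usual prefix argument, and the coset case needs no change.

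For the last sentence, Conjecture \ref{volconj} asserts that $\Volume(M_{\phi_N})/N$ converges to a genuine limit $v>0$ rather than merely lying in an interval; combined with the exact limit $\frac1N\log\lambda(\phi_N)\to\ell$ this forces $\log\lambda(\phi_N)/\Volume(M_{\phi_N})$ to converge in probability to $\ell/v$, i.e. one may take $C_1=C_2=\ell/v$. The one genuinely substantive point, and the main obstacle, is the positivity $\ell>0$: Kingman's subadditive ergodic theorem applied to $d_{\mathcal{T}}(x,\phi_N x)$ gives at once the existence of a drift and the upper bound $\limsup\frac1N\log\lambda(\phi_N)\le A$ (using Bers' identification of $\log\lambda(\phi)$ with the Teichm\"uller translation length), but the matching lower bound needs either Maher's linear-progress theorem \cite{maher2010linear} together with a comparison of translation length and displacement in the Teichm\"uller metric (the ``orbit follows a geodesic ray'' mechanism of Theorem \ref{karlssonmar}), or directly the non-vanishing of the top Lyapunov exponent of the foliation cocycle --- and this is precisely where the hypothesis that $\Gamma$ is non-elementary is indispensable.
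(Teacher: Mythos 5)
Your proposal is correct and follows essentially the same route as the paper: linear growth of $\log$ dilatation via the Oseledec/Lyapunov (equivalently Teichm\"uller drift) argument, combined with the two-sided linear volume bounds of Corollary \ref{volcor} and the generic hyperbolicity of Remark \ref{genhyp}, then dividing, with Conjecture \ref{volconj} upgrading the interval to a single constant. You merely make explicit some points the paper leaves implicit, in particular the positivity of the drift and the intersection-of-events bookkeeping.
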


The relationship between dilatation and volume has received a fair amount of attention, and it is known that without a lower bound on the injectivity radius of $M_\phi,$ one has families of examples where the ratio of volume to log of the dilatation is arbitrary small (see \cite{longmorton,fathi87}) In the paper \cite{kinkojimatakasawa} the authors study this ratio and prove \emph{upper} bounds in some families of cases.

We finally remark that the dilatation comes up some quite unexpected settings, in particular that of symplectic topology, as can be gleaned from the very nice papers \cite{felshtyngrowth,smithquadrics}.
\subsection{Essential surfaces}
Every mapping torus $M_\phi(S)$ contains at least one class of essential surfaces -- those isotopic to a fiber $S.$ In the case where the rational rank of the first homology of $M_\phi(S),$ Thurston's theory of the norm on homology tells us that there are infinitely many ways to fiber $M_\phi(S),$ which gives infinitely many distinct essential surfaces. However, not all fibers are alike. There is the following result of Bachman and Schleimer (which is analogous to Kevin Hartshorn's theorem (see \cite{hartshornhee}) in the setting of Heegaard splittings with large Heegaard distance:
\begin{theorem}[D. Bachman and S. Schleimer, \cite{bachschleifib}]
\label{basthm}
If the curve complex translation distance of $\phi$ equals $N,$ then, for any essential surface $F$ not isotopic to a fiber of $M_\phi(S),$
\[
-\chi(F) \geq N.
\]
\end{theorem}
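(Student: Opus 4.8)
The plan is to adapt Kevin Hartshorn's argument (as in \cite{hartshornhee}) from the Heegaard setting to the fibered setting, following the strategy of Bachman--Schleimer. The starting point is the observation that $M_\phi(S)$ is built from $S \times [0,1]$ by gluing $S \times \{0\}$ to $S \times \{1\}$ via $\phi$, and the curve complex translation distance $N = d_{\mathcal{C}(S)}(\phi)$ measures, up to bounded error, how far $\phi$ moves every vertex of $\mathcal{C}(S)$. First I would take an essential surface $F \subset M_\phi(S)$ not isotopic to a fiber, and put it in a position that interacts efficiently with the fibration: lift to the infinite cyclic cover $\widetilde{M} = S \times \reals$, and intersect $F$ (or rather its preimage) with the fibers $S \times \{t\}$. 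Using an innermost-disk / outermost-arc argument one arranges that, for generic $t$, the intersection $F \cap (S \times \{t\})$ is a nonempty collection of essential simple closed curves in $S$ — the key point being that if for some range of $t$ the intersection were inessential or empty, one could isotope $F$ to reduce complexity, and if this happened ``everywhere'' $F$ would be isotopic into a fiber, contradicting our hypothesis.

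The second step is to extract from this a path in the curve complex $\mathcal{C}(S)$. As $t$ increases, the essential curves $F \cap (S\times\{t\})$ change by saddle moves at the critical levels of the height function $t|_F$; two curves appearing at adjacent non-critical levels either are disjoint or differ by one saddle, hence are at distance $\leq 1$ in $\mathcal{C}(S)$ (or can be connected by a uniformly bounded jump). Concatenating over all critical levels in one fundamental domain $S \times [0,1]$ produces a path in $\mathcal{C}(S)$ whose length is bounded by the number of saddles, which in turn is bounded by $-\chi(F)$ up to a universal multiplicative and additive constant. The third step is to see that this path must have endpoints that are $N$-far apart: the curve at level $0$ and the curve at level $1$ are related by the monodromy $\phi$ (since $S\times\{0\}$ and $S\times\{1\}$ are identified via $\phi$ in $M_\phi(S)$), so the path connects a vertex $\gamma$ to $\phi(\gamma)$, and by definition of translation distance $d_{\mathcal{C}(S)}(\gamma, \phi(\gamma)) \geq N$. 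Combining the upper bound (length $\lesssim -\chi(F)$) with the lower bound ($\geq N$) yields $-\chi(F) \geq N$, possibly after absorbing universal constants — which, as in Hartshorn's and Bachman--Schleimer's treatments, can be made to disappear with a careful choice of the surgery/normalization conventions so that the inequality holds on the nose.

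The main obstacle, and the technical heart of the argument, is the middle step: controlling the surface $F$ so that it is simultaneously (a) incompressible with respect to the fibration (every intersection with a generic fiber is essential in $S$) and (b) in ``thin position'' so that the number of saddle tangencies with the fibers is genuinely bounded by $-\chi(F)$ rather than something larger. This requires a careful Morse-theoretic normalization of $F$ with respect to the fiber structure — removing center tangencies, eliminating inessential intersection curves by isotopies that do not increase $-\chi(F)$, and handling the possibility that $F$ is non-orientable or has boundary — together with the verification that the resulting isotopies can be performed inside $M_\phi(S)$ without ever making $F$ isotopic to a fiber along the way. I would cite \cite{hartshornhee} and \cite{bachschleifib} for the precise form of these normalization lemmas, as they are essentially identical to the Heegaard case once one replaces ``sweepout between the two handlebodies'' by ``sweepout of $M_\phi(S)$ by the fibers $S \times \{t\}$''; the only genuinely new input is the translation-distance interpretation of the endpoints, which is immediate from the definition of the mapping torus.
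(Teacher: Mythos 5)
The paper offers no proof of this statement---it is quoted as an external result of Bachman--Schleimer \cite{bachschleifib}---so the only meaningful comparison is with the argument of the cited source, and your sketch is essentially that argument: sweep out $M_\phi(S)$ by the fibers, normalize the essential surface $F$ so that every generic fiber intersection is essential (using that $F$ disjoint from a fiber forces $F$ to be isotopic to a fiber), and read off a path in the curve complex from a vertex $\gamma$ to $\phi(\gamma)$ whose length is bounded by the number of saddle tangencies. This is correct in outline, and the multiplicative/additive constants you hedge about are not actually needed: curves just below and just above a saddle can be made disjoint (distance at most $1$), and each saddle contributes exactly $-1$ to $\chi(F)$, so the path length is bounded by $-\chi(F)$ on the nose, giving $N \leq d_{\mathcal{C}(S)}(\gamma,\phi(\gamma)) \leq -\chi(F)$.
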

Theorem \ref{basthm} immediately implies the following corollary:
\begin{corollary}
The Euler characteristic of an essential surface not isotopic to the ``standard'' fiber in $M_\phi(S),$ for $\phi$ generated by a random walk of length $n$ in a finitely generated non-elementary subgroup $\Gamma$ of $\mcg(S)$ grows linearly in $n.$
\end{corollary}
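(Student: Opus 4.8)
The plan is to read the statement off the Bachman--Schleimer inequality (Theorem \ref{basthm}) once one knows that the curve-complex translation distance of a random monodromy grows linearly in $n$; that linear-growth statement for random walks is the only substantive input. I would start by recalling the standard framework: the curve complex $\mathcal{C}(S)$ is Gromov hyperbolic (Masur--Minsky), a non-elementary subgroup $\Gamma \le \mcg(S)$ acts on it with two independent loxodromic (pseudo-Anosov) elements, and hence the $\Gamma$-action on $\mathcal{C}(S)$ is non-elementary in exactly the sense demanded by the theory of random walks on hyperbolic spaces.

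Next I would invoke the linear-progress results for such walks --- Maher's theorems in the mapping class group setting (\cite{maher2010linear}, \cite{maher2011generic}, together with \cite{rivin2008walks,rivin2009walks,MR2725508}), and in sharp form the Maher--Tiozzo theorem on random walks on weakly hyperbolic groups --- to produce a constant $c = c(S,\Gamma) > 0$ such that the stable translation length $\ell_{\mathcal{C}}(\Gamma_n)$ of a random product $\Gamma_n$ of length $n$ is at least $cn$ with probability tending to $1$ (indeed exponentially fast) as $n \to \infty$; the same persists for a coset $h\Gamma$ since, as in Corollary \ref{volcor} and the surrounding remarks, the shifted walk tracks a geodesic ray in $\mathcal{C}(S)$ exactly as on $\Gamma$ itself. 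Since the minimal-displacement quantity $\mathrm{dist}_{\mathcal{C}}(\phi) := \min_{\gamma} d_{\mathcal{C}}(\gamma, \phi\gamma)$ that plays the role of $N$ in Theorem \ref{basthm} always satisfies $\mathrm{dist}_{\mathcal{C}}(\phi) \ge \ell_{\mathcal{C}}(\phi)$, we get $\mathrm{dist}_{\mathcal{C}}(\Gamma_n) \ge cn$ on an event of probability $\to 1$. A mapping class with translation distance this large is automatically pseudo-Anosov, so on that event $M_{\Gamma_n}(S)$ is a genuine (hyperbolic) surface bundle and Theorem \ref{basthm} applies: every essential surface $F \subset M_{\Gamma_n}(S)$ not isotopic to the standard fiber satisfies
\[
-\chi(F) \;\ge\; \mathrm{dist}_{\mathcal{C}}(\Gamma_n) \;\ge\; cn,
\]
which is the asserted (at least) linear growth; no matching upper bound is claimed, so nothing more is required.

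The main obstacle is the reconciliation in the middle step. Maher's original linear-progress statement most cleanly controls the orbit displacement $d_{\mathcal{C}}(x_0, \Gamma_n x_0)$ from a fixed basepoint $x_0$, which is only an \emph{upper} bound for the translation distance, whereas Theorem \ref{basthm} needs a \emph{lower} bound on it. Bridging this requires the deeper fact that the stable translation length itself grows linearly along the walk --- precisely where the Maher--Tiozzo (or Calegari--Maher) machinery is needed --- after which the elementary inequality $\ell_{\mathcal{C}}(\phi) \le \min_{\gamma} d_{\mathcal{C}}(\gamma, \phi\gamma)$ closes the gap. Everything else is immediate from the quoted theorems.
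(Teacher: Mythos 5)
Your proposal is correct and follows essentially the same route as the paper, which simply combines Theorem \ref{basthm} with Maher's linear-growth result for the curve complex translation distance of random walks (the paper cites \cite{maher2010linear} in Remark \ref{moremoremaher} is not a reference --- see Remark \ref{moremaher} --- and then states the corollary as an immediate consequence). Your extra care in distinguishing orbit displacement from (stable) translation length, and in using $\ell_{\mathcal{C}}(\phi)\le\min_{\gamma}d_{\mathcal{C}}(\gamma,\phi\gamma)$ to get the needed lower bound, is a legitimate refinement of a step the paper leaves implicit, not a departure from its argument.
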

\begin{remark}
\label{hatchflorem}
This might lead one to conjecture that such a surface might not exist, but this would be incorrect: classical results of W. Floyd and A. Hatcher \cite{flohatchfib} show that, for $S$ a punctured torus, there are \emph{always} non-fiber essential surfaces.
\end{remark}
\section{Fibered manifolds with prescribed homology}
\label{prehom}
Suppose we want a mapping torus with $b_1=1,$ and a prescribed structure of the homology group. This is easy to arrange: we find an $f\in \Sp(2g, \integers),$ with the elementary divisors of $f - I$ having the requisite structure (this is possible by Theorem \ref{snfthm}), take an $F$ whose image under the Torelli homomorphism is $f$ (that this can be done is a standard fact, see \cite{MR2850125} and multiply it by some map in the Torelli subgroup. A generic map in the Torelli subgroup is pseudo-Anosov (see \cite{malestein2013genericity,lubotzky2011sieve,maher2011generic}), and, by the work of Tiozzo \cite{tiozzo2012sublinear}, has north-south dynamics, so the same is true of a generic element in a coset (see Theorem \ref{volcor}). Furthermore, by the work of Kaimanovich and Masur \cite{kaimanovich1996poisson} on the nontriviality of the Poisson boundary of the mapping class group, we obtain the following counting result:
\begin{theorem}
\label{counttheorem}
For any finite abelian group $G,$ there exist at least  $\exp(c V),$ for some $c>0$ hyperbolic $S_g$ bundles, whose hyperbolic volume is at most $V,$ and whose first homology over the integers is isomorphic to $G \oplus \integers.$
\end{theorem}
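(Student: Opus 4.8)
The plan is to let the symplectic monodromy carry the prescribed group, and to generate exponentially many fillings of the quota by taking long elements of a nonelementary subgroup of the Torelli group. \emph{Step 1 (realizing $G$).} Write $G\cong\integers/d_1\oplus\cdots\oplus\integers/d_k$ with $d_1\mid d_2\mid\cdots\mid d_k$, and fix $g$ large enough that $2g\ge k$ (the statement is vacuous otherwise). Padding the list of invariant factors with $1$'s, Theorem~\ref{snfthm} produces an $f\in\Sp(2g,\integers)$ whose elementary divisors of $f-I$ are $1,\dots,1,d_1,\dots,d_k$, so that $\coker(f-I)\cong G$ and in particular $\det(f-I)\ne0$. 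By surjectivity of the symplectic representation $\mcg_g\to\Sp(2g,\integers)$ (\cite{MR2850125}), fix $F\in\mcg_g$ acting as $f$ on $H_1(S_g;\integers)$.

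\emph{Step 2 (the family, its homology, its geometry).} Fix a finitely generated nonelementary subgroup $\Lambda\le\mathcal{I}_g$ of the Torelli group (one may take $\Lambda=\mathcal{I}_g$ when $g\ge3$; two independent bounding-pair maps suffice when $g=2$), and let $\mu$ be the uniform measure on a symmetric generating set of $\Lambda$. For every $h\in\Lambda$ the mapping class $Fh$ acts as $f$ on $H_1(S_g;\integers)$, so Corollary~\ref{mappinghomo} together with $\det(f-I)\ne0$ (and Corollary~\ref{highrank}) gives $H_1(M_{Fh};\integers)\cong G\oplus\integers$ and $b_1(M_{Fh})=1$. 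By the genericity results \cite{malestein2013genericity,lubotzky2011sieve,maher2011generic} applied to the coset $F\Lambda$ (cf.\ Corollary~\ref{volcor}), $Fh$ is pseudo-Anosov with $\mu^{*n}$-probability $1-o(1)$, hence $M_{Fh}$ is hyperbolic by Theorem~\ref{thurstonfiber}; and since a single generator moves a basepoint of Teichm\"uller space a bounded Weil--Petersson distance, Brock's theorem (\cite{brocktori}) gives the deterministic bound $\vol(M_{Fh})\le C\lvert h\rvert+C'$. Thus, with $n=\lfloor V/C\rfloor$, it suffices to exhibit at least $\exp(cn)$ pseudo-Anosov elements among $\{Fh:\lvert h\rvert\le n\}$ that yield pairwise non-homeomorphic manifolds.

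\emph{Step 3 (from mapping classes to manifolds).} Because $b_1(M_{Fh})=1$, the Thurston norm ball is an interval and the fibration of $M_{Fh}$ is unique up to isotopy and reversal of the $S^1$-direction; hence $M_{Fh}\cong M_{Fh'}$ forces $Fh'$ to be conjugate to $(Fh)^{\pm1}$ in the extended mapping class group. So the goal reduces to producing $\exp(cn)$ pairwise non-conjugate pseudo-Anosov elements in $F\Lambda$ of word length $\le n$. Here one invokes Kaimanovich--Masur \cite{kaimanovich1996poisson}: the Poisson boundary of $(\Lambda,\mu)$ is nontrivial (identified with the space $\mathcal{PMF}$ of projective measured foliations equipped with a non-atomic harmonic measure $\nu$), so the walk has positive asymptotic entropy $\theta>0$; consequently the number of elements carrying all but half the mass of $\mu^{*n}$ grows like $\exp((\theta-o(1))n)$, and intersecting with the pseudo-Anosov set (mass $1-o(1)$) keeps this exponential count. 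One then passes to conjugacy classes using that the conjugacy class of a pseudo-Anosov determines the $\mcg_g$-orbit of its attracting foliation, that this foliation for $w_n$ converges in $\mathcal{PMF}$ to the $\nu$-distributed boundary point, and that a non-atomic $\nu$ on the countable-orbit quotient $\mathcal{PMF}/\mcg_g$ must charge uncountably many orbits; an entropy/pigeonhole argument then extracts a pairwise non-conjugate subfamily of size $\exp(cn)$. Combined with Step 2 this yields $\ge\exp(cV)$ pairwise non-homeomorphic hyperbolic $S_g$-bundles of volume $\le V$ with first homology $G\oplus\integers$.

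\textbf{Main obstacle.} The delicate point is the final passage of Step 3. Positive entropy produces exponentially many group \emph{elements}, but the homeomorphism type sees only the \emph{conjugacy class}, and a single conjugacy class of a pseudo-Anosov of translation length $\le n$ can meet the word-ball of radius $n$ in a large (a priori even exponential) set, so one cannot simply divide. Making rigorous the assertion that the push-forward of $\mu^{*n}$ to conjugacy classes retains exponential effective support is exactly where the non-atomicity of the harmonic measure on $\mathcal{PMF}$ must be used quantitatively. A cleaner alternative, which sidesteps the random walk, is to replace $\Lambda$ by an explicit free ping-pong subgroup of pseudo-Anosov elements (with contracting Schottky-type dynamics on $\mathcal{PMF}$): there are $\exp(cn)$ cyclically reduced words of length $\le n$ up to cyclic permutation, and quasiconvexity of such a free subgroup bounds the multiplicity with which distinct such conjugacy classes can fuse in $\mcg_g$, giving the required count directly — though one still must check that $Fw$ is pseudo-Anosov for the whole family.
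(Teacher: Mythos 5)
Your proposal takes essentially the same route as the paper: realize the prescribed invariant factors via Theorem~\ref{snfthm}, lift to a mapping class $F$ and work in the Torelli coset $F\Lambda$ so the homology is frozen at $G\oplus\integers$, use genericity of pseudo-Anosovs and Brock's theorem for hyperbolicity and the linear volume bound, and invoke Kaimanovich--Masur for the exponential count. The obstacle you flag in Step 3 (passing from exponentially many group elements to exponentially many conjugacy classes, hence homeomorphism types) is genuine, but the paper does not treat it either --- its entire argument is a one-paragraph sketch that simply asserts the counting result follows from the nontriviality of the Poisson boundary --- so your write-up is, if anything, more explicit about where the real work lies.
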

\section{Random subgroups}
\label{randsubgp}
In this section we remark the following:
\begin{theorem}
\label{gensubgp}
Let $\gamma_1, \gamma_2$ be two random $(\Gamma, N)$ elements of a nonelementary subgroup $\Gamma$ of the  mapping class group of a surface of genus $g.$ Then with probability going to $1$ as a function of $N,$ the subgroup generated by $\gamma_1$ and $\gamma_2$ is free.
\end{theorem}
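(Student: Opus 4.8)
The plan is to run a ping-pong (table-tennis) argument for the action of the mapping class group on the curve complex $\mathcal{C}(S)$, which is Gromov hyperbolic by Masur--Minsky. First I would condition on the overwhelmingly likely event that both $\gamma_1$ and $\gamma_2$ are pseudo-Anosov: by the work of Maher and the author (\cite{maher2011generic,rivin2008walks,rivin2009walks,MR2725508}) this has probability $1-o(1)$ (indeed $1-e^{-cN}$). On this event each $\gamma_i$ acts on $\mathcal{C}(S)$ as a loxodromic isometry with a distinct pair of fixed points $\gamma_i^{+},\gamma_i^{-}\in\partial\mathcal{C}(S)$, namely its attracting and repelling laminations.

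Next I would record two quantitative facts. First, by Maher's linear progress theorem \cite{maher2010linear} --- together with the fact, also due to Maher, that the curve-complex translation length of a random walk grows linearly --- the stable translation length $\ell_{\mathcal{C}}(\gamma_i)$ is at least $c_1 N$ with probability $1-o(1)$; in particular a quasi-axis of $\gamma_i$ passes within bounded distance of the basepoint $o$ and $\gamma_i^{k}$ makes linear progress along it. Second --- and this is the crux --- with probability $1-o(1)$ the four boundary points $\gamma_1^{+},\gamma_1^{-},\gamma_2^{+},\gamma_2^{-}$ are in \emph{general position}, meaning that all six pairwise Gromov products $(\,\cdot\mid\cdot\,)_o$ are bounded above by a constant $B$ not depending on $N$. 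This should follow from the fact that $\gamma_1$ and $\gamma_2$ are built from two \emph{independent} random walks: the endpoints --- and, running the walks backwards, the initial points --- of independent sample paths on a hyperbolic space diverge, so the corresponding Gromov products are tight; and by effective tracking estimates for weakly hyperbolic random walks (in the spirit of Maher--Tiozzo), together with the non-atomicity of the harmonic measure on the Poisson boundary (Kaimanovich--Masur \cite{kaimanovich1996poisson}), the fixed points $\gamma_i^{\pm}$ stay within bounded distance of the forward and backward limits of the underlying sample path. Non-atomicity already gives distinctness of the four points almost surely, but what is needed here is the \emph{uniform} bound $B$, not mere distinctness.

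With these inputs the conclusion is a standard argument in a $\delta$-hyperbolic space: if $g,h$ are loxodromic isometries whose four fixed points have pairwise Gromov products at $o$ bounded by $B$, and whose translation lengths exceed a threshold $L_0=L_0(\delta,B)$, then one may choose disjoint ``shadow'' neighbourhoods $U^{\pm}_g,U^{\pm}_h$ of the four fixed points so that $g$ maps the complement of $U^{-}_g$ into $U^{+}_g$ and likewise for $h$; since $\ell_{\mathcal{C}}(\gamma_i)\geq c_1 N$ exceeds $L_0$ once $N$ is large, we may apply this with $g=\gamma_1,\ h=\gamma_2$ and no auxiliary powers, and the ping-pong lemma shows $\langle\gamma_1,\gamma_2\rangle$ is free of rank $2$. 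Intersecting the three events above --- each of whose complements has probability $o(1)$ --- proves the theorem.

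The main obstacle is the uniform general-position estimate of the second paragraph: converting the qualitative independence of the two defining random walks into a Gromov-product bound $B$ independent of $N$, and controlling the distance between the fixed points of the resulting mapping class and the limit points of its defining sample path. The soft ingredients --- hyperbolicity of $\mathcal{C}(S)$, positivity of drift, non-atomic harmonic measure --- are all available; the work lies in the effective tracking, for which the weakly-hyperbolic random-walk machinery is the natural tool. One could alternatively phrase the argument on $\mathcal{PML}(S)$, using that $\log$ of the dilatation grows linearly (Oseledec) in place of the curve-complex translation length, but keeping genuine hyperbolicity of $\mathcal{C}(S)$ seems cleanest.
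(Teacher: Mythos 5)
Your proposal is correct in outline and is, at heart, the same argument as the paper's: a ping-pong scheme whose inputs are the boundary theory of random walks on the mapping class group and the non-atomicity of harmonic measure (Kaimanovich--Masur \cite{kaimanovich1996poisson}). The difference is one of venue and of the quantitative lemma invoked: the paper plays ping-pong directly on $\mathcal{PML}(S)$, using Tiozzo's result \cite{tiozzo2012sublinear} that random elements act with north--south dynamics, together with non-atomicity to guarantee that for large $N$ the fixed-point pairs of $\gamma_1$ and $\gamma_2$ avoid each other's attracting basins; you instead work on the Gromov-hyperbolic curve complex, trading north--south dynamics for Maher's linear progress \cite{maher2010linear} plus Gromov-product (``general position'') estimates at the basepoint. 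Both routes work, and yours is arguably more quantitative (it is essentially the Taylor--Tiozzo mechanism for showing random subgroups are free and convex cocompact), but note that the step you single out as the main obstacle is asking for slightly more than you need: a bound $B$ on the six Gromov products that holds uniformly with probability $1-o(1)$ is not available in that clean form, and is not required. Tightness suffices: the pairwise Gromov products of the four fixed points are $O(1)$ in probability (equivalently $o(N)$ with probability $1-o(1)$), e.g.\ by comparing $d(o,\gamma_i o)$ with the translation length and by the independence of the two walks, while the translation lengths grow linearly in $N$; choosing a threshold $B=B(N)\to\infty$ slowly then makes the ping-pong condition ``translation length much larger than Gromov products'' hold with probability tending to $1$, which is exactly the quantitative gap the paper's own one-line appeal to north--south dynamics is papering over as well.
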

\begin{proof}
This is a standard ping-pong argument. By the work of \cite{tiozzo2012sublinear}, $\gamma_1$ and $\gamma_2$ have north-south dynamics, and by the results of Kaimanovich-Masur (which state that the harmonic measure on PML is non-atomic), we know that that for large enough $N$ the fixed points of $\gamma_1$ and $\gamma_2$ are not in each other's basin of attraction.
\end{proof}
\begin{remark} The scheme of the proof of Theorem \ref{gensubgp} is essentially the same as Guivarc'h's proof of the Tits alternative \cite{guivarc1990produits}. See also Richard Aoun's proof of a similar result for linear groups \cite{aoun2011random}.
\end{remark}
\section{Fibered cusped manifolds and Dehn filling}
\label{cusped}
In this section we look at mapping tori of automorphisms of \emph{cusped} hyperbolic surfaces. Most of the results for closed surfaces carry over essentially verbatim, so here we will focus only on results specific to cusped surfaces.

The first result we need is the nice result of D.~Futer and S.~Schleimer \cite{futer2011cusp} on the size of the cusp of mapping tori (we state here the slightly simpler version of their result where the surface has one cusp only).
\begin{theorem}
\label{futerschleimer}
Let $F$ be a punctured surface, and let $\phi: F\rightarrow F$ be an orientation-preserving pseudo-Anosov homeomorphism. Let $M_\phi$ be the mapping torus of $\phi,$ and let $C$ be the maximal cusp neighborhood in $M_\phi.$ Then, we have the following bounds:
\[
\dfrac{\overline{d(\phi)}}{450\chi(F)^4}< \area(\partial C) < 9 \chi(F)^2 \overline{d(\phi)},
\]
while
\[
\dfrac{\overline{d(\phi)}}{536\chi(F)^4} < \height(\partial C) < - 3 \chi(F)\overline{d(\phi)}.
\]
\end{theorem}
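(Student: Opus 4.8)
The plan is to reduce Theorem \ref{futerschleimer} to a combinatorial count on a canonical ideal triangulation of $M_\phi$, and then convert that count into hyperbolic geometry via the maximal-cusp normalization. First I would bring in Agol's \emph{veering ideal triangulation} $\tau$ of $M_\phi$, which exists precisely because $M_\phi$ fibers with pseudo-Anosov monodromy: it is a layered triangulation read off from the periodic (maximal) splitting sequence of the invariant measured train tracks of $\phi$. The combinatorial heart of the argument is that the number of tetrahedra $|\tau|$ is comparable, up to multiplicative and additive constants depending only on $\chi(F)$, to the arc-complex translation distance $\overline{d(\phi)}$: one period of the splitting sequence is an unparametrized quasigeodesic joining an arc $\alpha$ to $\phi(\alpha)$ (splitting sequences track geodesics in the relevant complex, by work of Masur-Minsky and others), and each maximal splitting introduces at most $O(|\chi(F)|)$ tetrahedra. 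The induced cell decomposition of the cusp cross-section torus $\partial C$ then has $O_{\chi}(|\tau|)$ cells, organized into roughly $|\tau|/|\chi(F)|$ ``layers'' stacked transversely to the fiber slope $\lambda = F \cap \partial C$, with $O(|\chi(F)|)$ cells per layer; this combinatorial torus is the common source of all four bounds.

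For the upper bounds, I would normalize $C$ to be the maximal cusp and straighten the induced triangulation of $\partial C$ to a geodesic one on the Euclidean torus. Here one uses that a maximal cusp boundary is uniformly non-degenerate on the scale of individual cells -- concretely, Adams's estimates give a closed geodesic of length at most $6$ on $\partial C$, and the maximality of the cusp together with the bounded geometry of the $\epsilon$-thick part bounds the shape of each cell -- so each of the $O_\chi(|\tau|)$ cells contributes bounded area. Summing gives $\area(\partial C) = O_\chi(|\tau|) = O_\chi(\overline{d(\phi)})$, which after tracking the constants is the asserted bound $\area(\partial C) < 9\chi(F)^2\,\overline{d(\phi)}$. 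For the height, I would read the same decomposition one layer at a time: each of the $\asymp |\tau|/|\chi(F)|$ layers contributes a Euclidean strip of bounded width to the annulus $\partial C \setminus \lambda$, so $\height(\partial C) = \area(\partial C)/\ell(\lambda) = O(|\tau|/|\chi(F)|)$, giving $\height(\partial C) < -3\chi(F)\,\overline{d(\phi)}$.

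For the lower bounds, I would run these implications backwards: a geometrically small cusp is combinatorially short. A thin maximal cusp can accommodate only boundedly many splitting moves -- equivalently, the stable and unstable laminations of $\phi$ can spiral into it only a bounded number of times per unit of cusp area (or per unit of cusp height), using the collar lemma and the identity $\vol(C) = \tfrac12 \area(\partial C)$ -- so a small $\area(\partial C)$ or $\height(\partial C)$ forces $|\tau|$ small, hence $\overline{d(\phi)}$ small by the first step. Contrapositively one obtains $\area(\partial C) > \overline{d(\phi)}/(450\,\chi(F)^4)$ and $\height(\partial C) > \overline{d(\phi)}/(536\,\chi(F)^4)$. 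The degradation from $\chi^2$ in the upper bounds to $\chi^4$ in the lower ones is the accumulated loss of the backward direction: once in converting hyperbolic area back to a cell count, and once more in the arc-complex-versus-tetrahedron comparison, which is lossier in this direction.

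The main obstacle is the combinatorial-to-geometric conversion, because the veering triangulation of $M_\phi$ need not be geometric: some ideal tetrahedra may be flat or negatively oriented, so one cannot simply add up shape parameters to recover $\area(\partial C)$. Handling this honestly requires either straightening $\tau$ rel the cusp and controlling the bad tetrahedra through the thick-thin decomposition and hyperbolic Dehn-surgery estimates in the style of Hodgson-Kerckhoff and Neumann-Zagier, or else bypassing $\tau$ on the geometric side entirely -- running a Teichm\"uller (Minsky-model) sweepout of $M_\phi$, reading the cusp cross-section off the associated hierarchy, and matching it to the arc-complex combinatorics. A secondary difficulty, and the source of the explicit numerical constants, is making the ``splitting sequence versus arc-complex geodesic'' comparison effective with the dependence on $\chi(F)$ tracked at every step.
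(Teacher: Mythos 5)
A preliminary remark: the paper contains no proof of Theorem \ref{futerschleimer} for you to match --- it is quoted (in the one-cusp case) from Futer and Schleimer \cite{futer2011cusp}, so your proposal must be judged as a reconstruction of their argument. Judged that way, it has a genuine gap, and the gap sits exactly where you park the ``main obstacle.'' Your outline rests on two conversions: (i) the number of tetrahedra in Agol's layered/veering triangulation is comparable to the stable arc-complex translation distance $\overline{d(\phi)}$ with constants depending explicitly on $\chi(F)$, and (ii) the induced cell structure on $\partial C$ turns that count into area and height bounds for the \emph{maximal} cusp. For (i), the coarse statement that splitting sequences track quasigeodesics (Masur--Minsky, Hamenst\"adt) comes with non-explicit quasi-isometry constants, so numbers like $450\chi(F)^4$, $9\chi(F)^2$, $536\chi(F)^4$ and $-3\chi(F)$ cannot be extracted from that machinery; an effective version would have to be proved from scratch. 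For (ii), you correctly observe that the veering triangulation need not be geometric, but the remedies you name (Neumann--Zagier/Hodgson--Kerckhoff control of bad tetrahedra, or a Minsky-model sweepout) are not carried out and are not known to produce bounds of this explicit form; in particular the step ``each straightened cell of $\partial C$ contributes bounded area'' is false without a positivity/non-degeneracy statement you do not supply, and the lower bounds (``a thin cusp admits only boundedly many splittings'') are asserted, not argued. So the proposal is a plausible strategy sketch, not a proof.

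For comparison, the actual argument of \cite{futer2011cusp} deliberately avoids this difficulty: Futer and Schleimer do not pass through the veering triangulation or through coarse curve-complex machinery at all, but work with pleated copies of the fiber realized in the hyperbolic structure on $M_\phi$, interpolating between an essential arc $\alpha$ and $\phi(\alpha)$, and read the area and height of the maximal cusp torus off the horocyclic boundary segments of these pleated surfaces using elementary hyperbolic-geometry estimates. That is precisely what makes the explicit constants in the statement attainable. If you want to rescue your route, you would either need to replace the layered-triangulation bookkeeping by such pleated-surface estimates, or prove an effective geometricity statement for veering triangulations --- a substantially harder problem than the theorem itself.
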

In the statement of Theorem \ref{futerschleimer}, $\overline{d(\phi)}$ refers to the stable translation length of $\phi$ in the arc complex of $F.$ As for the \emph{height} of a torus, there is a preferred direction in $\partial C,$ corresponding to the cusp of $F.$ Call the length of the geodeisc in this direction $\lambda.$ Then the height of $\partial C$ is simply \[
\height (\partial C) = \dfrac{\area(\partial C)}\lambda.\]
For our purposes, all we need to know about stable translation distance in the arc complex is that it is bounded below by a multiple of the curve complex translation distance, and bounded above by a multiple of the pants complex translation distance.
Combined with the results in Section \ref{volsec}, we get:
\begin{corollary}
\label{fscor}
Both the area and the height of the maximal cusp of a random cusped mapping torus of $F$ grow linearly with the length of the random monodromy (where the monodromy is chosen at random from a nonelementary subgroup of $\mcg(F).$)
\end{corollary}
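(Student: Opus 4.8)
The plan is to combine Theorem~\ref{futerschleimer} of Futer--Schleimer with the linear-growth statements already established in Section~\ref{volsec} for translation distances of random monodromies. The only new observation needed is a dictionary between the quantities appearing in those two results: the stable translation length $\overline{d(\phi)}$ in the arc complex on the one hand, and the curve complex / pants complex translation distances on the other. As noted in the paragraph preceding the corollary, $\overline{d(\phi)}$ is coarsely comparable from below to the curve complex translation distance of $\phi$ and from above to the pants complex translation distance; this is a standard fact about the arc complex of a punctured surface (coming from the quasi-isometry between the arc and curve complexes of a fixed surface with punctures, due to Korkmaz--Papadopoulos, and from the standard comparison between pants distance and curve distance recalled in Remark~\ref{curvecomplexes}). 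Since $\chi(F)$ is a fixed constant depending only on the surface, the coefficients $9\chi(F)^2$, $1/(450\chi(F)^4)$, $3|\chi(F)|$, $1/(536\chi(F)^4)$ in Theorem~\ref{futerschleimer} are absorbed into universal constants.

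First I would invoke the results of Section~\ref{volsec}, transported to the cusped setting. The point is that the machinery behind Theorem~\ref{volgrowth} (the Karlsson--Margulis multiplicative ergodic theorem, Theorem~\ref{karlssonmar}, applied to the action on Teichm\"uller space with the Weil--Petersson metric, together with Maher's linear-progress theorem for the curve complex, Remark~\ref{moremaher}) applies verbatim to a nonelementary subgroup $\Gamma$ of $\mcg(F)$ for $F$ a punctured surface: for a random product $\Gamma_n$ of length $n$ in $\Gamma$, both the curve complex translation distance and the pants complex translation distance of $\Gamma_n$ lie almost surely in an interval $(nc_1, nc_2)$ with $0 < c_1 \le c_2$, the constants depending only on the generating set. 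Combined with the coarse comparison above, we get constants $0 < a_1 \le a_2$ with $a_1 n \le \overline{d(\Gamma_n)} \le a_2 n$ almost surely.

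Next I would feed this two-sided bound on $\overline{d(\Gamma_n)}$ directly into Theorem~\ref{futerschleimer}. The upper bounds there give $\area(\partial C) < 9\chi(F)^2 a_2 n$ and $\height(\partial C) < 3|\chi(F)| a_2 n$; the lower bounds give $\area(\partial C) > a_1 n / (450\chi(F)^4)$ and $\height(\partial C) > a_1 n / (536\chi(F)^4)$. Setting $C_1 = \min\{a_1/(450\chi(F)^4),\, a_1/(536\chi(F)^4)\}$ and $C_2 = \max\{9\chi(F)^2 a_2,\, 3|\chi(F)| a_2\}$ yields that both $\area(\partial C)$ and $\height(\partial C)$ lie almost surely in $(C_1 n, C_2 n)$, which is the claimed linear growth. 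One must also note, as in Remark~\ref{genhyp}, that a random $\Gamma_n$ is pseudo-Anosov with probability tending to $1$, so that $M_{\Gamma_n}$ is genuinely hyperbolic with a genuine cusp and Theorem~\ref{futerschleimer} applies to all but exponentially few words.

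The main obstacle is the comparison step: one must be careful that the arc complex translation distance really is sandwiched between multiples of the curve and pants distances with multiplicative constants that are uniform over the relevant maps (not just coarse up to an additive error that could swamp the linear growth). For punctured surfaces the arc complex and curve complex are quasi-isometric with constants depending only on the topological type of $F$, so the additive ambiguity is a constant independent of $n$ and is harmless against the linear lower bound $a_1 n$; this is the one place where the fixedness of the fiber surface is essential, and it is worth stating explicitly. Everything else is bookkeeping.
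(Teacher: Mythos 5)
Your proposal is correct and matches the paper's own route: the paper deduces the corollary by sandwiching the stable arc complex translation length between multiples of the curve complex distance (bounded below linearly by Maher's linear-progress result, Remark \ref{moremaher}) and the pants complex distance (bounded above linearly via the Karlsson--Margulis argument of Section \ref{volsec}), and then feeding this into Theorem \ref{futerschleimer}, with $\chi(F)$ fixed. Your added care about uniform comparison constants and about genericity of pseudo-Anosov monodromy (Remark \ref{genhyp}) is consistent with, and slightly more explicit than, what the paper records.
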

Corollary \ref{fscor} has a somewhat surprising consequence:
\begin{theorem}
\label{dehnhyp}
Let $F$ be a surface of genus at least $2,$ with one puncture, and let $\Gamma$ be a subgroup of $\mcg(F)$ whose image under the usual ''forget the puncture'' map to $\mcg(\overline{F})$ (where $\overline{F}$ is the surface $F$ with the punctured filled in) is non-elementary, and let $\phi$ be a random element of length $n$ of $\Gamma.$ Then, with probability approaching $1$ as $n$ goes to infinity, \emph{all} Dehn fillings of $M_\phi$ are hyperbolic.
\end{theorem}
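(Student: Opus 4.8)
The plan is to exploit the single cusp of $M_\phi$ (there is exactly one, since $F$ has one puncture), to show that for large $n$ all Dehn-filling slopes but one are very long on the maximal cusp, so that the $6$-theorem of Agol and Lackenby forces the filling to be hyperbolic, and to dispatch the one remaining slope directly.

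First I would fix a representative of $\phi$ that is the identity on $\partial F$, so that $M_\phi$ is a once-punctured $F$-bundle over $S^1$ with a distinguished fibre surface whose boundary traces out a slope $\mu$ on the maximal cusp torus $T=\partial C$. As recalled after Theorem \ref{futerschleimer}, $T$ has a preferred direction — the holonomy of $\partial F$, of length $\lambda$ — and $\height(\partial C)=\area(\partial C)/\lambda$ is exactly the width of the holonomy lattice of $T$ in the direction perpendicular to that preferred direction. Since $\mu$ is simple it is a primitive class, hence the only primitive slopes parallel to the preferred direction are $\pm\mu$; and any slope $\nu$ not parallel to it corresponds to a lattice point off the preferred line, so its geodesic length on $T$ is at least $\height(\partial C)$. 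By Theorem \ref{futerschleimer}, $\height(\partial C) > \overline{d(\phi)}/\bigl(536\,\chi(F)^4\bigr)$; by Remark \ref{moremaher} (linear progress of the random walk in the curve complex) together with the comparison, recorded after Theorem \ref{futerschleimer}, of arc-complex translation length with a multiple of curve-complex translation length, $\overline{d(\phi)}$ grows linearly in $n$ with probability tending to $1$. Hence, with probability tending to $1$, $\height(\partial C)>6$, so every slope $\nu\neq\mu$ has length greater than $6$ on the maximal cusp; by the $6$-theorem the filling $M_\phi(\nu)$ is irreducible, atoroidal and has infinite, not virtually cyclic, fundamental group, hence is hyperbolic by geometrization.

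It remains to treat $\mu$. Filling $M_\phi$ along $\mu$ caps off the boundary of every fibre, yielding the mapping torus $M_{\overline\phi}$ of the homeomorphism $\overline\phi$ of the closed surface $\overline F$ induced by $\phi$; by construction $\overline\phi$ is the image of $\phi$ under the forget-the-puncture map. That map sends $\Gamma$ onto a non-elementary subgroup $\overline\Gamma\leq\mcg(\overline F)$ by hypothesis, and sends the symmetric generating set of $\Gamma$ to a symmetric generating multiset of $\overline\Gamma$; therefore $\overline\phi$ is a length-$n$ random product in $\overline\Gamma$, which by Maher's genericity theorem (see Remark \ref{genhyp}) is pseudo-Anosov with probability tending to $1$, and then $M_{\overline\phi}$ is hyperbolic by Thurston's Theorem \ref{thurstonfiber}. (Pseudo-Anosovness of $\overline\phi$ also forces pseudo-Anosovness of $\phi$, so on this same event $M_\phi$ is hyperbolic to begin with and ``Dehn filling'' is meaningful.) Intersecting the event $\{\height(\partial C)>6\}$ with the event $\{\overline\phi \text{ pseudo-Anosov}\}$ — each of probability tending to $1$ — gives an event of probability tending to $1$ on which every Dehn filling of $M_\phi$ is hyperbolic.

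The crux — and essentially the only nontrivial geometric input — is the step that \emph{isolates} the slope $\mu$: one needs the maximal cusp to ``open up'', i.e.\ $\height(\partial C)\to\infty$, not merely $\area(\partial C)\to\infty$. This is precisely the height half of Futer--Schleimer's estimate (Theorem \ref{futerschleimer}), propagated through Corollary \ref{fscor}; the area bound alone would leave open the possibility of many short slopes transverse to $\mu$. Everything else is soft machinery already in hand: the $6$-theorem, geometrization for fibered fillings, and Maher's genericity of pseudo-Anosovs. The remaining things to check carefully are routine: the homeomorphism $M_\phi(\mu)\cong M_{\overline\phi}$, and the fact that forgetting the puncture turns a length-$n$ random product in $\Gamma$ into one in $\overline\Gamma$.
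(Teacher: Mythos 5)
Your proposal is correct and follows essentially the same route as the paper: use Corollary \ref{fscor} (Futer--Schleimer height bound plus linear growth of translation length) to make $\height(\partial C)$ large, conclude via the $6$-theorem argument (the paper cites Agol's Theorem 8.1/8.2 for exactly the isolation of the fiber-boundary slope you spell out with the lattice computation) that only the trivial slope can be exceptional, and then handle that slope by noting the forgetful image of $\phi$ is generically pseudo-Anosov, so the capped-off bundle is hyperbolic by Thurston's Theorem \ref{thurstonfiber}. The only difference is that you make explicit the details the paper delegates to the cited references.
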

\begin{proof}
By Corollary \ref{fscor}, the height of $\partial C$ is large, and so by the argument given in \cite{agolbounds}[Theorem 8.1] (see also Theorem 8.2 in the same paper, and the similar approach in \cite{lackword}) there is at most \emph{one} exceptional slope. That slope is the trivial slope, where we just close the cusp of $F.$ However, since $\Gamma$ projects onto a non-elementary subgroup of $\mcg(\overline{F})$ under the forgetful map, the image of $\phi$ is pseudo-Anosov, with probability approaching one, so the filled-in bundle is hyperbolic also.
\end{proof}
\begin{remark}
\label{nobun}
In the case $\Gamma = \mcg(F)$ we already know that $b_1(M_\phi)$ is generically equal to $1,$ and since non-trivial Dehn filling reduces the rank of homology, all of the non-trivial fillings will be (generically) hyperbolic rational homology spheres (and, in particular, will not fiber over $S^1.$)
\end{remark}
In the case when $F$ is the punctured torus, Theorem \ref{dehnhyp} does not apply. However, we have the following result:
\begin{corollary}
\label{ptor}
In the case when $F$ is the punctured torus, generically all but the trivial filling will be hyperbolic. The trivial filling will have a $\mbox{solv}$ geometric structure.
\end{corollary}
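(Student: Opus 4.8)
The plan is to run essentially the same scheme as the proof of Theorem \ref{dehnhyp}, feeding the Futer--Schleimer bounds of Theorem \ref{futerschleimer} through Corollary \ref{fscor} to get a very tall maximal cusp, but now to identify \emph{which} slope is the (unique) potentially non-hyperbolic one and to recognize the manifold produced by filling it. First I would record the setup: for the once-punctured torus $F$ we have $\mcg(F) \cong \SL(2,\integers)$, the ``forget the puncture'' map to $\mcg(\overline F) \cong \SL(2,\integers)$ is an isomorphism, and a mapping class is pseudo-Anosov exactly when the corresponding matrix is hyperbolic (Anosov, $|\tr| > 2$). Hence for a random word $\phi$ of length $n$ in the generators of a nonelementary $\Gamma \le \mcg(F)$ (or a coset), $\phi$ is pseudo-Anosov with probability tending to $1$, $M_\phi$ is a cusped hyperbolic once-punctured-torus bundle with a single cusp, and its image $\overline\phi$ downstairs is the same Anosov matrix.

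Next I would analyze the cusp cross-section. By Corollary \ref{fscor} the flat torus $\partial C$ has area and height both growing linearly in $n$, and since $\height(\partial C) = \area(\partial C)/\lambda$ where $\lambda$ is the length of the preferred curve (the boundary $\partial F$ of the fiber, i.e. the peripheral loop of the puncture), $\lambda$ stays bounded while the ``height'' direction runs off to infinity. On a flat torus of fixed short side $\lambda$ and perpendicular extent $h$, every primitive slope other than the one parallel to $\partial F$ has length at least $h$; therefore, for $n$ large, the slope along $\partial F$ is the \emph{only} slope of length $\le 6$. Applying the $6$-theorem (Agol, Lackenby; cf. the argument of \cite{agolbounds}[Theorem 8.1] used for Theorem \ref{dehnhyp}), every slope except the one along $\partial F$ yields a hyperbolic Dehn filling.

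It then remains to identify the trivial slope and its filling. Filling $M_\phi$ along the slope $\partial F \times \{\mathrm{pt}\}$ caps off the puncture in every fiber, turning each $F$ into the closed torus and the bundle into $M_{\overline\phi}$; this is precisely the ``trivial filling'' of Remark-type discussion, closing the cusp of $F$. Since $\overline\phi$ is an Anosov matrix, $M_{\overline\phi}$ is a torus bundle with Anosov monodromy, hence carries a $\mathrm{Sol}$ structure (and, by Thurston's trichotomy for torus bundles — $\mathrm{Sol}$, Nil, or Euclidean according as the monodromy is Anosov, parabolic, or periodic — this is its only geometry). In particular this slope is never hyperbolic, which matches the fact that its length must be $\le 6$. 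Combining: generically every filling except the trivial one is hyperbolic, and the trivial one is $\mathrm{Sol}$.

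The main obstacle is the bookkeeping in the middle step: one has to be certain that the two-sided height estimate of Theorem \ref{futerschleimer} really forces \emph{exactly one} short slope — i.e. that no large shear in $\partial C$ can produce a second independent short curve — and that the ``preferred direction'' of Futer--Schleimer is genuinely the topological $\partial F$ slope whose filling recovers $M_{\overline\phi}$. Both are elementary once the flat geometry of $\partial C$ and the bundle picture are pinned down; after that the $6$-theorem input and the $\mathrm{Sol}$ classification are standard.
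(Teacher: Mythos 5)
Your proposal is correct and follows essentially the same route as the paper: the paper's proof simply says the argument is identical to that of Theorem \ref{dehnhyp} (the Futer--Schleimer cusp bounds via Corollary \ref{fscor} plus the $6$-theorem-type argument of \cite{agolbounds}, leaving only the trivial slope as potentially exceptional), and then observes that the trivial filling is a torus bundle with hyperbolic (Anosov) monodromy, hence a solv-manifold. The extra bookkeeping you supply about the flat geometry of $\partial C$ and the identification of the preferred slope with $\partial F$ is a reasonable fleshing-out of what the paper leaves implicit, not a different approach.
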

\begin{proof}
The argument is identical to the proof of Theorem \ref{dehnhyp}, noting that a three-manifold which fibers over $S^1$ with fiber $T^2$ and \emph{hyperbolic} monodromy is a solv-manifold. See \cite{rivin2014generic}.
\end{proof}
\begin{remark}
The results of this section apply \emph{mutatis mutandis} for surfaces with multiple punctures, but stating them in that case would complicate the exposition.
\end{remark}
\subsection{Integer homology spheres}
Using Theorem \ref{snfthm} and the observations in Section \ref{prehom}, we see that we can construct a coset $H$ of the Torelli subgroup of $F$ such that for any $\phi \in H,$ $H_1(M_\phi(F), \integers) = \integers),$ and the generic such $M_\phi(F)$ is a hyperbolic manifold, and \emph{all} Dehn surgeries are hyperbolic. In particular, this is true for all integral Dehn surgeries. Since any nontrivial integral Dehn surgery on this class of manifolds will give an integer homology sphere, we see that 
\begin{theorem}
\label{intdehn}
For a generic mapping torus with monodromy in $H,$ \emph{all} nontrivial integral Dehn surgeries give \emph{hyperbolic} integral homology spheres.
\end{theorem}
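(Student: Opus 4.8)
The plan is to assemble Theorem \ref{intdehn} from the three ingredients already laid out in this section, using Theorem \ref{snfthm} and the discussion in Section \ref{prehom} to set up the right coset, and then quoting Theorem \ref{dehnhyp} for the hyperbolicity of all Dehn fillings. First I would fix a punctured surface $F$ of genus at least $2$ with one puncture, and use Theorem \ref{snfthm} with all elementary divisors equal to $1$ to produce an $f\in\Sp(2g,\integers)$ with $f-I$ having trivial cokernel, i.e.\ $\det(f-I)=\pm 1$. Lifting $f$ through the Torelli homomorphism (a standard fact, as cited in Section \ref{prehom}) gives a mapping class $F_0$; the coset $H = F_0 \cdot \mathcal{I}(F)$ of the Torelli subgroup then has the property that for \emph{every} $\phi\in H$, the action of $\phi_*$ on $H_1(\overline F,\integers)$ agrees with $f$, so by Corollary \ref{mappinghomo} we get $H_1(M_\phi(\overline F),\integers)\simeq \coker(f-I)\oplus\integers = \integers$. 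One subtlety to address carefully: the homological statement is about the \emph{filled-in} closed bundle $M_\phi(\overline F)$, and I should note that the cusped manifold $M_\phi(F)$ is obtained from it by drilling, and that the forgetful map $\mcg(F)\to\mcg(\overline F)$ carries $H$ onto a coset of $\mathcal{I}(\overline F)$, which is itself nonelementary, so the hypotheses of Theorem \ref{dehnhyp} are met.

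Next I would invoke genericity. By the results on genericity of pseudo-Anosovs in the Torelli subgroup and in its cosets (Malestein–Putman, Lubotzky–Meiri, Maher, and Tiozzo, all cited in Section \ref{prehom}), together with Corollary \ref{volcor} extending everything to cosets, a random element $\phi$ of length $n$ in $H$ is pseudo-Anosov with probability tending to $1$, its forgetful image in $\mcg(\overline F)$ is pseudo-Anosov with probability tending to $1$, and its arc-complex / curve-complex translation distance grows linearly in $n$. Feeding this into Corollary \ref{fscor} and then Theorem \ref{dehnhyp} gives: with probability approaching $1$, $M_\phi(F)$ is hyperbolic and \emph{all} its Dehn fillings are hyperbolic. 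In particular all integral Dehn surgeries on the cusp are hyperbolic.

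Finally I would run the homology bookkeeping for the surgered manifolds. A non-trivial integral Dehn surgery on a one-cusped manifold $M$ kills a primitive class in $H_1(\partial M)$; the relevant computation (via the half-lives-half-dies principle, or directly from the long exact sequence of the pair) shows that starting from $H_1(M_\phi(F),\integers)\simeq \integers$ — where the $\integers$ is carried by the cusp, since filling the trivial slope gives $H_1\simeq\integers$ from Corollary \ref{mappinghomo} — any non-trivial integral filling produces a manifold with $H_1 = 0$, i.e.\ an integral homology sphere. Combining this with the hyperbolicity from the previous paragraph yields the theorem. I expect the main obstacle to be the last step done honestly: one must verify that $H_1(M_\phi(F),\integers)$ is exactly $\integers$ (not merely $\integers$ after filling), identify which slope is "trivial" and check that the generator of $H_1$ pairs correctly with the surgery slopes, so that \emph{every} non-trivial \emph{integral} slope — not just the longitude — yields $H_1=0$; this is where the choice of $f$ with $\det(f-I)=\pm 1$ (as opposed to merely $b_1=1$) is essential and must be used with care.
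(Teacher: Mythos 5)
Your proposal is correct and follows essentially the same route as the paper: it builds the coset $H$ of the Torelli group via Theorem~\ref{snfthm} so that $H_1(M_\phi(F),\integers)\simeq\integers$, quotes Corollary~\ref{fscor} and Theorem~\ref{dehnhyp} for generic hyperbolicity of \emph{all} fillings, and then checks that nontrivial integral fillings kill the $\integers$. The only difference is that you spell out the homology bookkeeping for the surgered manifolds (which the paper simply asserts), and that added care is welcome rather than a departure.
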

By the defining property of Casson's invariant (see, for example, \cite{savcasson}), for each generic $M_\phi(F),$ the integral homology spheres described in Theorem \ref{intdehn} will be hyperbolic, and their Casson invariants will form an infinite arithmetic progression. Not only that, their volumes will be bounded above by the volume of $M_\phi(F)$ (see, for example, The step of this progression is equal to the second derivative of the Alexander polynomial (evaluated at $1$). Since the volume is (eventually) strictly increasing, almost all of these integer homology spheres are pairwise non-homeomorphic. However, it is possible that they all have the same Casson invariant, since there appears to be no easy condition which confirms that the above-mentioned $\Delta^{\prime\prime}(1)$ is not equal to zero. Luckily, in our case we can finesse this point, using the following fact:
\begin{fact}
\label{alexfact}
The Alexander polynomial of a knot $k$ in a homology sphere $\Sigma,$ where $\Sigma \backslash k$ fibers over $S^1$ is the characteristic polynomial of the Torelli action of the monodromy.
\end{fact}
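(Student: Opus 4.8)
The plan is to recognize Fact~\ref{alexfact} as the classical computation (going back to Milnor) of the Alexander polynomial of a fibered knot in terms of the monodromy of its infinite cyclic cover. Write $E = \Sigma\setminus k$ for the complement; since $\Sigma$ is an integral homology sphere, $H_1(E;\integers)\cong\integers$, generated by a meridian of $k$, and by definition the Alexander polynomial $\Delta_k(t)$ is a generator of the smallest principal ideal containing the order ideal of the Alexander module $H_1(\widetilde E;\integers)$, where $\widetilde E\to E$ is the infinite cyclic cover classified by the abelianization $\pi_1(E)\twoheadrightarrow H_1(E;\integers)=\integers$, and $\integers[t,t^{-1}]$ acts via the group of deck transformations.

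First I would identify the cover explicitly. By hypothesis $E$ fibers over $S^1$ with fiber a once-punctured genus $g$ surface $F$ and monodromy $\phi\colon F\to F$, and the bundle projection $p\colon E\to S^1$ induces a surjection $H_1(E;\integers)\to H_1(S^1;\integers)=\integers$, hence (both groups being $\integers$) an isomorphism; in particular $\ker(p_*\colon\pi_1(E)\to\integers)$ equals the kernel of the abelianization. Therefore the infinite cyclic cover of $E$ is exactly the pullback of $\reals\to S^1$ along $p$, so $\widetilde E\cong F\times\reals$ with a generator of the deck group acting by $(x,s)\mapsto(\phi(x),s+1)$. Consequently $\widetilde E\simeq F$, and as a $\integers[t,t^{-1}]$-module
\[
H_1(\widetilde E;\integers)\;\cong\;H_1(F;\integers)\quad\text{with }t\text{ acting as }\phi_*.
\]

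Next, $F$ being once-punctured, $H_1(F;\integers)\cong\integers^{2g}$ is free, and the induced map $\phi_*$ on it is precisely the image $A\in\Sp(2g,\integers)$ of the monodromy under the symplectic representation -- the ``Torelli action'' of $\phi$. From the description above, $H_1(\widetilde E;\integers)$ is the cokernel of
\[
\integers[t,t^{-1}]^{2g}\;\xrightarrow{\;tI-A\;}\;\integers[t,t^{-1}]^{2g},
\]
and this map is injective because $\det(tI-A)$ is a monic polynomial of degree $2g$ in $t$, hence a non-zero-divisor in the domain $\integers[t,t^{-1}]$. Thus $tI-A$ is a \emph{square} presentation matrix for the Alexander module, so its determinant generates the order ideal; therefore $\Delta_k(t)\doteq\det(tI-A)$, i.e. the Alexander polynomial coincides, up to the unit ambiguity $\pm t^{j}$ inherent in its definition, with the characteristic polynomial of the Torelli image $A$ of $\phi$.

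The argument is essentially textbook, so I expect no real obstacle; the two points that need care are (i) verifying that the bundle projection $p$ classifies the \emph{infinite cyclic} cover -- this is exactly where the homology-sphere hypothesis enters, through $H_1(E;\integers)\cong\integers$ -- and (ii) justifying that the order ideal of $H_1(\widetilde E;\integers)$ is principal with generator $\det(tI-A)$, which relies on $H_1(F;\integers)$ being free so that the presentation matrix is square rather than merely finitely generated. As a consistency check, $\Delta_k(1)\doteq\det(I-A)$, and $\det(I-A)=\pm1$ is precisely the condition $\coker(\phi_*-\iota_*)=0$, i.e., by Corollary~\ref{mappinghomo}, that the mapping torus $M_\phi(F)$ have first integral homology $\integers$ with trivial torsion -- which is exactly what is needed for the relevant Dehn filling to yield a homology sphere.
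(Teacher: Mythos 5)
Your argument is correct: the paper states Fact~\ref{alexfact} without proof, as the classical result (essentially Milnor's computation for fibered knots), and your derivation -- identifying the infinite cyclic cover with $F\times\reals$ via the fibration, presenting the Alexander module by $tI-\phi_*$ over $\integers[t,t^{-1}]$, and taking the determinant -- is exactly the standard argument being invoked. The two points you flag (that the homology-sphere hypothesis makes the fibration cover coincide with the infinite cyclic cover, and that freeness of $H_1(F;\integers)$ gives a square presentation matrix) are indeed the only places requiring care, and you handle them correctly.
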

Since we know that there are fibered knots in $\mathbb{S}^3$ with $\Delta^{\prime\prime}(1) = 1$ (the trefoil knots leaps to mind), we know that we can construct the requisite monodromy, and thus, just as in the beginning of this section, a non-elementary subgroup with that monodromy, and we can, therefore, conclude with:
\begin{theorem} 
\label{cassthm}
There are infinitely many mapping tori $M_\phi,$ such that the family of integral surgeries of $M_\phi$ gives a family of hyperbolic integer homology spheres whose set of Casson invariants equals $N, N+1, \dotsc
.$
\end{theorem}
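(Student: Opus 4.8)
The plan is to rerun the construction behind Theorem~\ref{intdehn}, but with the target symplectic matrix chosen so carefully that $\Delta''(1)$ of the resulting Alexander polynomial is not merely nonzero but equal to $\pm2$; the Casson surgery formula then converts that number into the common difference of the progression of Casson invariants, and Fact~\ref{alexfact} is precisely what permits us to prescribe it. The crux is to produce a once-punctured surface $F=\Sigma_{g,1}$ with $g\geq2$ and a mapping class $\psi$ of $F$ whose action $A\in\Sp(2g,\integers)$ on $H_1(F)$ satisfies both $\det(A-I)=\pm1$ \emph{and} $\Delta_A''(1)=\pm2$, where $\Delta_A$ denotes the characteristic polynomial of $A$ normalized to a symmetric Laurent polynomial. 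Theorem~\ref{snfthm} by itself does not achieve this: prescribing the elementary divisors of $A-I$ to be all $1$ only forces $\Delta_A(1)=\pm1$ and controls nothing about $\Delta_A''(1)$. The most transparent source of such an $A$ is the monodromy of a genuine fibered knot $K\subset\mathbb{S}^3$ of Seifert genus $g\geq2$ with $\Delta_K''(1)=\pm2$ --- for instance $K=6_3$, for which $\Delta_K=t^4-3t^3+5t^2-3t+1$ and $\tfrac12\Delta_K''(1)=1$, or the trefoil with two Hopf bands plumbed on (whose normalized Alexander polynomial is that of the trefoil): for such $K$, the condition $\det(A-I)=\pm1$ holds automatically by Corollary~\ref{mappinghomo} since $K$ sits in a homology sphere, and $\Delta_A\doteq\Delta_K$. (One could equally well produce such an $A\in\Sp(4,\integers)$ by a direct algebraic construction, every monic reciprocal integer polynomial of even degree being a symplectic characteristic polynomial.)

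Fix such a $\psi$ on $F=\Sigma_{g,1}$, let $\Gamma'\leq\mathcal{I}(F)$ be a finitely generated non-elementary subgroup of the Torelli group whose image in $\mcg(\Sigma_g)$ under forgetting the puncture is also non-elementary (such a $\Gamma'$ exists, e.g.\ generated by two independent pseudo-Anosov elements of $\mathcal{I}(F)$), and set $H=\psi\,\Gamma'$. Every $\phi\in H$ acts on $H_1(F)$ as $A$, so Corollary~\ref{mappinghomo} gives $H_1(M_\phi,\integers)\cong\coker(A-I)\oplus\integers\cong\integers$; filling the cusp of $M_\phi$ along a slope carried to a generator of this $\integers$ produces an integer homology sphere $\Sigma_\phi$ with $M_\phi=\Sigma_\phi\setminus N(k_\phi)$, the knot $k_\phi$ having Seifert longitude $\partial F$ and Seifert surface the fiber $F$. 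By Fact~\ref{alexfact}, $\Delta_{k_\phi}$ is, up to units, the characteristic polynomial of $A$, hence $\Delta_{k_\phi}''(1)=\pm2\neq0$ by our choice of $\psi$. I would then invoke exactly the genericity package already assembled before the statement: by the results on north--south dynamics inside a Torelli coset cited in Section~\ref{prehom}, together with Corollary~\ref{fscor} and Theorem~\ref{dehnhyp} (that is, the argument that yields Theorem~\ref{intdehn}), a generic $\phi\in H$ is pseudo-Anosov, $M_\phi$ is hyperbolic, and \emph{every} Dehn filling of $M_\phi$ is hyperbolic.

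Finally, the Dehn fillings of $M_\phi$ that are integer homology spheres are exactly the $1/n$-surgeries $(\Sigma_\phi)_{1/n}(k_\phi)$, $n\in\integers$ (filling along $\mu+n\,\partial F$), all of them hyperbolic by the previous paragraph, and the Casson surgery formula for a knot in a homology sphere reads
\[
\lambda\bigl((\Sigma_\phi)_{1/n}(k_\phi)\bigr)=\lambda(\Sigma_\phi)+\tfrac{n}{2}\,\Delta_{k_\phi}''(1)=\lambda(\Sigma_\phi)\pm n .
\]
Restricting to the infinitely many such surgeries with $n$ on the half-line where this expression is increasing, the set of Casson invariants is precisely $\{N,N+1,N+2,\dots\}$ with $N=\lambda(\Sigma_\phi)+1\in\integers$; these homology spheres are pairwise non-homeomorphic for large $n$ because their hyperbolic volumes increase strictly to $\vol(M_\phi)$ by Thurston's hyperbolic Dehn surgery theorem; and letting $\phi$ range over generic elements of $H$ of growing word length --- for which $\vol(M_\phi)\to\infty$, the volume estimates of Section~\ref{volsec} applying verbatim to the cusped setting (cf.\ Section~\ref{cusped}) --- produces infinitely many distinct mapping tori $M_\phi$ with this property. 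The one genuine obstacle is Step~1: arranging $\det(A-I)=\pm1$ together with $\Delta_A''(1)\neq0$, information strictly finer than Theorem~\ref{snfthm} supplies; everything following is assembled from results already in the paper, plus the Casson surgery formula.
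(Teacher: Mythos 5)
Your proposal is correct and follows essentially the same route as the paper: base a coset of the Torelli group at a mapping class realizing the monodromy of a fibered knot with $\Delta''(1)\neq 0$, use Fact~\ref{alexfact} to identify the Alexander polynomial with the characteristic polynomial of the symplectic action, invoke the genericity package behind Theorem~\ref{intdehn} (hyperbolicity of all Dehn fillings), and convert $\tfrac12\Delta''(1)=\pm1$ into the step of the arithmetic progression via the Casson surgery formula. The only difference is cosmetic: the paper takes the trefoil as the seed monodromy, whereas you take a genus-two fibered knot such as $6_3$ (a reasonable refinement, since Theorem~\ref{dehnhyp} as stated needs fiber genus at least two, the punctured-torus case being covered instead by Corollary~\ref{ptor}).
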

\begin{remark}
A result similar to Theorem \ref{cassthm} (also with a probabilistic proof) has been announced by A.~Lubotzky, J.~Maher, and C.~Wu. Their result is stronger in that they generate \emph{all} Casson invariants (while the argument above has some difficulties with low-lying values). On the other hand, they have much less control on the spaces they construct, and their method would certainly \emph{not} even get control on volume, much less get all the examples be Dehn fillings of the same space.
\end{remark}
\subsection{Essential surfaces}
\label{esssurf}
It is an interesting, and completely open, in general, question whether a generic bundle contains any non-fiber essential surface. Luckily, thanks to the work of Floyd and Hatcher \cite{flohatchfib} and Culler, Jaco, Rubinstein \cite{cjr}, the answer is completely understood in the case where the fiber is a punctured torus. Their classification immediately implies the following counting result:
\begin{theorem}
\label{ptorcount}
Let $M_\phi$ be the mapping torus of a random pseudo-Anosov automorphism of a punctured torus, with monodromy of length $n.$ Then, the number of essential surfaces in $M_\phi$ is \emph{asymptotic} to $c^n,$ for some constant $c > 1.$
\end{theorem}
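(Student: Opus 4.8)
The plan is to translate the count into the combinatorics of the $RL$-factorisation of the monodromy, apply the Floyd--Hatcher and Culler--Jaco--Rubinstein classifications of incompressible surfaces in once-punctured-torus bundles, and then read off the exponential growth rate from the ergodic behaviour of a random $\SL(2,\integers)$-element. First I would set up notation: the mapping class group of the once-punctured torus $F$ is $\SL(2,\integers)$, the map $\phi$ is pseudo-Anosov exactly when $A_\phi\in\SL(2,\integers)$ is hyperbolic ($|\tr A_\phi|>2$), and by the random-walk results used throughout this paper a random word of length $n$ in the generators of a nonelementary subgroup is hyperbolic with probability $1-o(1)$, so $M_\phi$ is a cusped hyperbolic manifold; moreover $\det(A_\phi-I)=2-\tr A_\phi\neq 0$, so $H_1(M_\phi;\integers)=\integers\oplus(\text{finite})$ and the fibration is unique, i.e.\ the fibre contributes exactly one isotopy class of essential surface. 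A hyperbolic $A_\phi$ is conjugate in $\GL(2,\integers)$, up to sign, to a positive cyclic word $R^{a_1}L^{b_1}\cdots R^{a_k}L^{b_k}$ in $R=(\begin{smallmatrix}1&1\\0&1\end{smallmatrix})$, $L=(\begin{smallmatrix}1&0\\1&1\end{smallmatrix})$, with all exponents positive; write $L_\phi=\sum_i(a_i+b_i)$ for the length of this cyclic word. Since $L_\phi$ is the translation length of $A_\phi$ in a metric quasi-isometric to the word metric on $\SL(2,\integers)$, positive drift of the random walk (the subadditive ergodic theorem) gives $L_\phi/n\to\ell$ almost surely, for a constant $\ell>0$ depending only on the generators.

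Next I would invoke the classification: by Floyd--Hatcher together with Culler--Jaco--Rubinstein, the isotopy classes of essential surfaces in $M_\phi$ other than the fibre are in correspondence, up to bounded multiplicity, with the admissible closed edge-paths in the finite ``diagram'' attached to the cyclic $RL$-word, and this set is enumerated by a transfer product $\tr\!\big(N_{s_1}N_{s_2}\cdots N_{s_{L_\phi}}\big)$ (or a suitable boundary-weighted variant), where $s_1 s_2\cdots s_{L_\phi}$ is the cyclic word in $\{R,L\}$ and $N_s$ is a fixed primitive nonnegative integer matrix whose entries depend only on $s$ and on whether the position lies at a syllable boundary. Each $N_s$ has spectral radius strictly between $1$ and some bound, so one immediately obtains the crude sandwich $c_1^{L_\phi}\le \#\{\text{essential surfaces in }M_\phi\}\le c_2^{L_\phi}$ with $1<c_1\le c_2$; together with the previous paragraph this already shows the count grows exponentially in $n$.

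To identify the precise base $c$, I would use that the cyclic $RL$-word of $A_\phi$ is a period of the cutting sequence of the closed geodesic on $\hyps^2/\SL(2,\integers)$ determined by the fixed-point pair $(\xi^+_\phi,\xi^-_\phi)$, and that as $n\to\infty$ this pair converges in law to $\nu\otimes\check\nu$, with $\nu$ the harmonic measure of the random walk on the boundary; hence the digit sequence of the $RL$-word, read from a uniformly random syllable, converges in law to the stationary, ergodic process describing the $\nu$-random continued-fraction expansion. Applying Kingman's subadditive ergodic theorem to the cocycle $j\mapsto\log\|N_{s_1}\cdots N_{s_j}\|$ along this process gives $\frac1{L_\phi}\log\#\{\text{essential surfaces}\}\to c''$ almost surely, and with $L_\phi/n\to\ell$ we obtain $\frac1n\log\#\{\text{essential surfaces in }M_\phi\}\to c'$ almost surely for $c'=\ell c''\in(0,\infty)$; set $c=e^{c'}$.

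The main obstacle is the second step: extracting from Floyd--Hatcher and Culler--Jaco--Rubinstein a clean transfer-matrix formula for the number of essential surfaces \emph{up to isotopy}, with correct bookkeeping of which surfaces count as essential, of orientability and connectedness, and of the (bounded) number of edge-paths that can represent a single isotopy class. Once the count is known to be a submultiplicative functional of the $RL$-word with per-letter contributions bounded away from $1$ and $\infty$, the ergodic-theoretic step is routine; and if the Floyd--Hatcher enumeration turns out to be genuinely multiplicative over the letters, the conclusion sharpens from ``exponential growth rate $c$'' to an honest asymptotic $\#\{\text{essential surfaces}\}\sim c^n$.
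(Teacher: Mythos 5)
Your route is the intended one: the paper offers no argument beyond the assertion that the Floyd--Hatcher and Culler--Jaco--Rubinstein classification ``immediately implies'' the count, so your plan --- identify $\mcg$ of the punctured torus with $\SL(2,\integers)$, pass to the cyclic $RL$-word of the (generically hyperbolic) monodromy, enumerate the non-fiber essential surfaces via edge-paths in the Floyd--Hatcher diagram, and then feed the resulting combinatorial functional of the word into positive drift plus a subadditive/ergodic argument --- is exactly the fleshed-out version of what the paper has in mind, and it is more detailed than anything in the text. Two cautions. First, the one concrete detail you assert that is off is the claim that each \emph{letter} carries a transfer matrix of spectral radius strictly greater than $1$, giving a lower bound $c_1^{L_\phi}$ with $c_1>1$: in the Floyd--Hatcher parametrization the branching occurs essentially at syllable changes of the $RL$-word, so for monodromies such as $R^aL$ with $a$ large the number of isotopy classes stays bounded while $L_\phi\to\infty$; the correct statement is exponential growth in the number of syllables $k$. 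This is harmless for the theorem, since for a random word both $k$ and $L_\phi$ grow linearly in $n$ almost surely, but the per-letter sandwich as written is false. Second, your closing caveat is exactly right: this method (and, realistically, any method) yields $\frac1n\log(\text{number of essential surfaces})\to\log c$ almost surely rather than a literal asymptotic equivalence to $c^n$, and the paper's wording ``asymptotic to $c^n$'' should be read in that weaker sense; your honest flagging of the bookkeeping needed to pin down the exact multiplicative structure (multiplicity of edge-paths per isotopy class, boundary-incompressibility, connectedness) identifies precisely the step the paper leaves entirely to the cited classifications.
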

\section{Experimental results}
\label{experiments}
In this section we report on some experimental results, obtained with Mathematica and Twister. The setup was as follows: we looked at bundles of a genus 2 surface over the circle, by taking random \emph{positive} products of the Humphries generators for the mapping class group (described below in Section \ref{humphries}). The experiment consisted of taking $1000$ random products of each of the lengths $100, \dotsc, 1000,$ in increments of $10.$
\subsection{Homology}
In the first experiment we computed the natural logarithm of the order of torsion in the homology of the bundle. 
In the first figure (Fig. \ref{fig:homomean}), we plot the means of the log of the order of the torsion:

\begin{figure}
\centering
\includegraphics[width=0.5\textwidth]{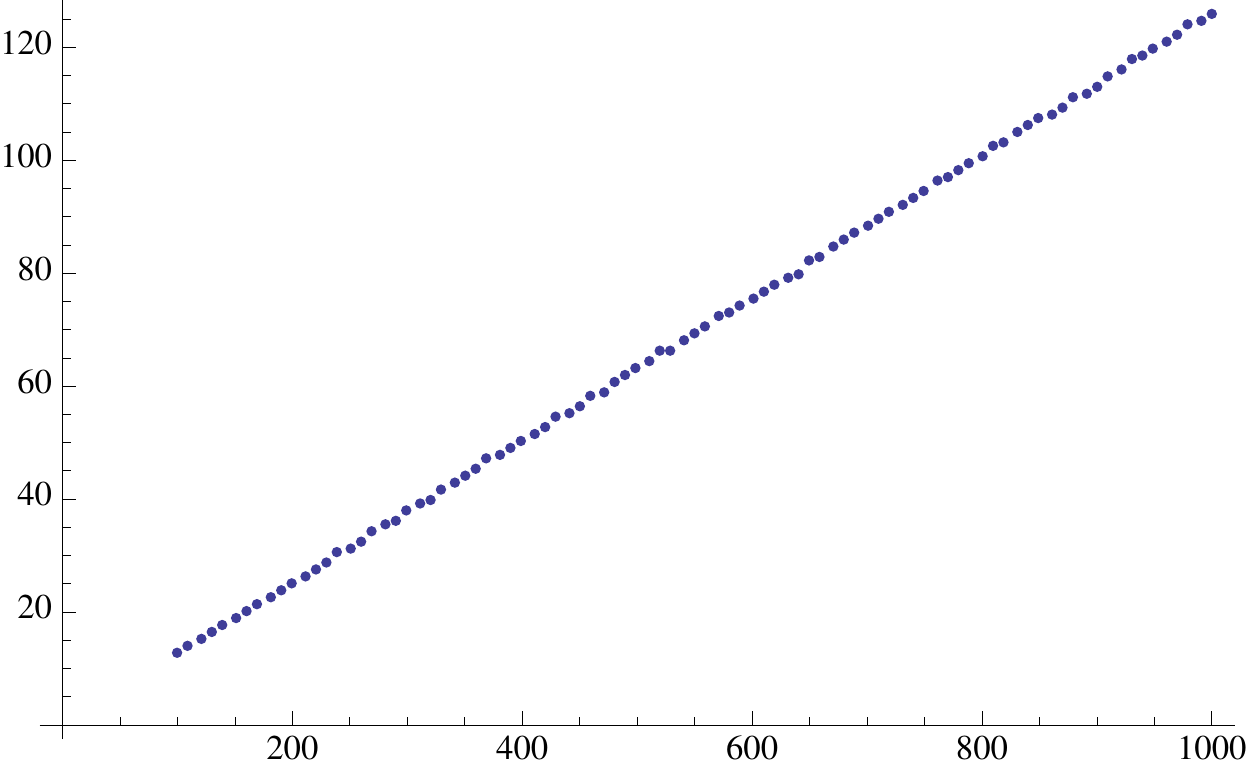}
\caption{\label{fig:homomean} Log of the order of torsion as a function of word length}
\end{figure}
We see that the experimental results bear out our theoretical results better than we could have hoped (since we have no convergence speed estimates, it would have been conceivable that we would need to go a lot further to get this sort of linear fit).
In case the reader is wondering what the slope of the straight line is, Mathematica reports that the line of best fit is:
\[
\boxed{
\mbox{$\log$ of torsion} = 0.023474 + 0.15187 \mbox{\ word length}
}
\]
Next, we plot the variance in Figure \ref{fig:homovar}.
\begin{figure}
\centering
\includegraphics[width=0.5\textwidth]{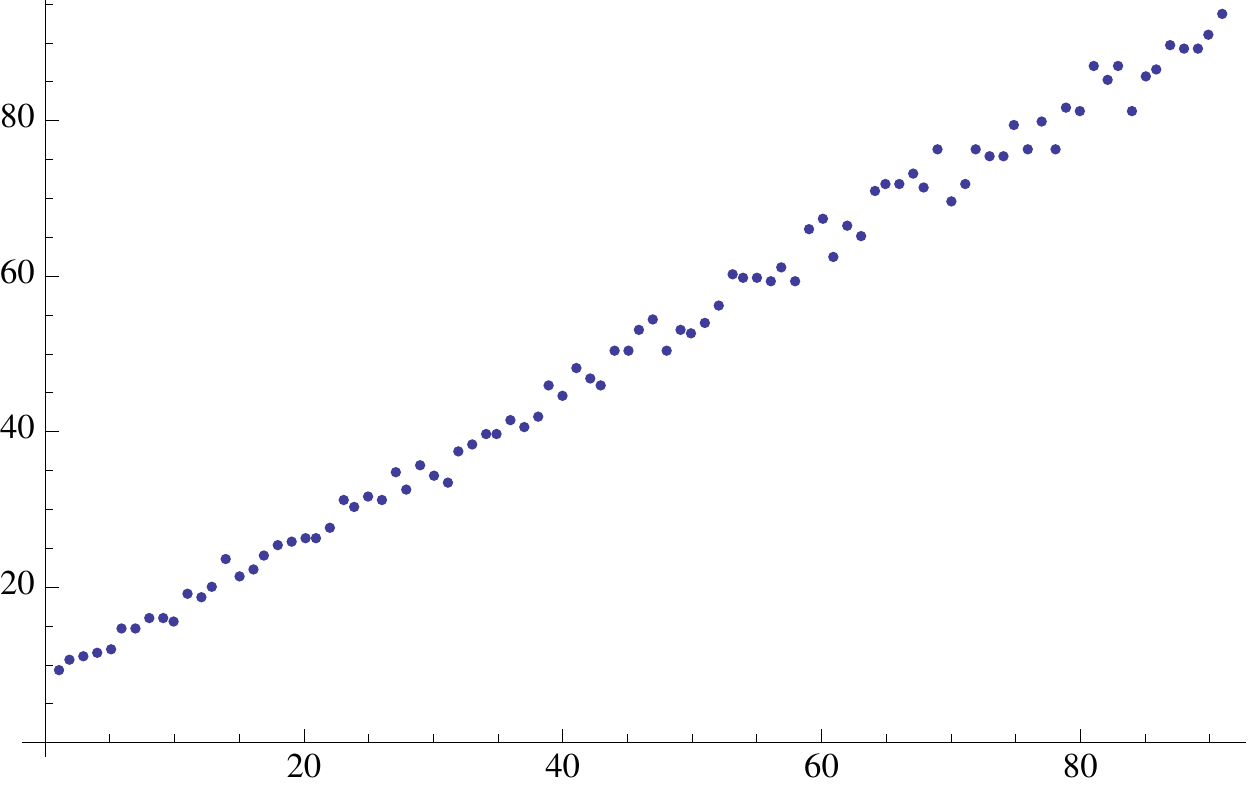}
\caption{\label{fig:homovar} The variance of the log of the order of torsion as a function of word length}
\end{figure}
We have claimed that the logs of torsion should be normally distributed for random words of the same length. Let's see if that is true.
\begin{figure}
\centering
\includegraphics[width=0.5\textwidth]{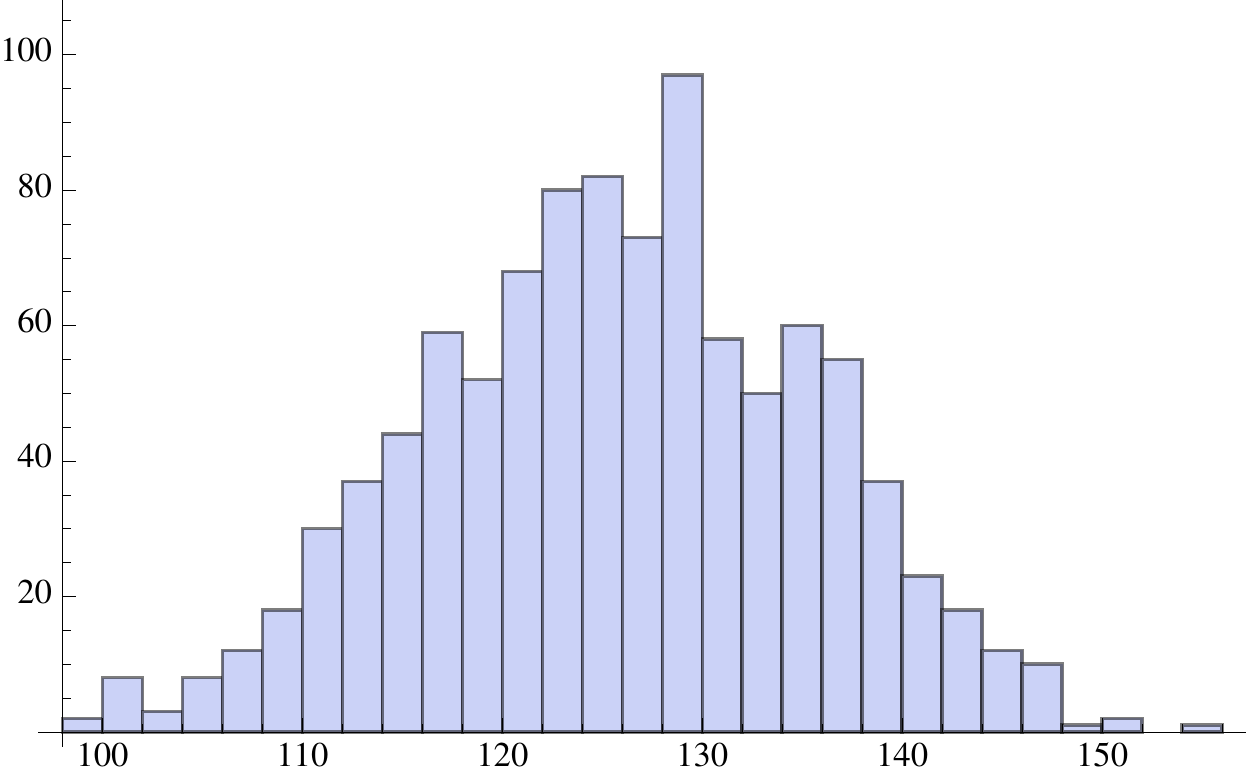}
\caption{\label{fig:homohist} The distribution of log of torsion of words of length $1000$ in the Humphries generators}
\end{figure}
As you see in Figure \ref{fig:homohist}, the distribution looks kind of normal, but to convince ourselves more, let's do a quantile-quantile plot against the normal distribution (see Figure \ref{fig:homoqq}).
\begin{figure}
\centering
\includegraphics[width=0.5\textwidth]{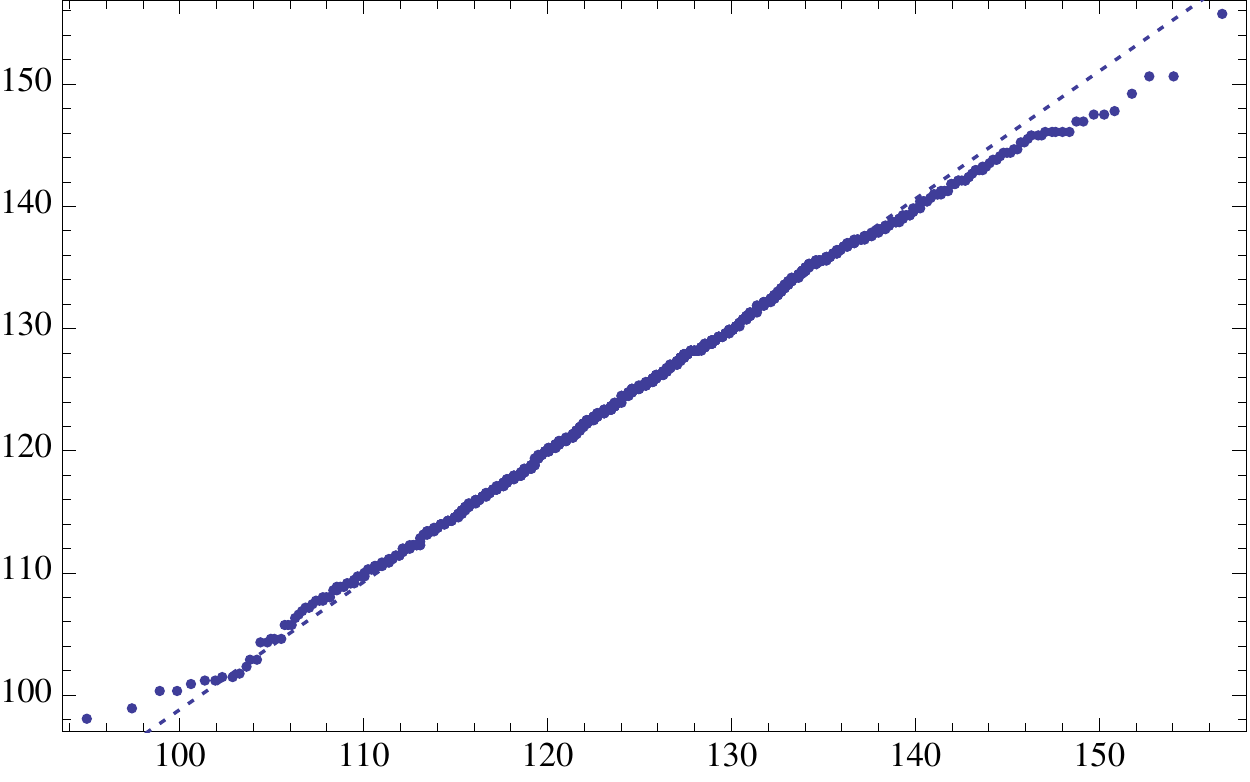}
\caption{\label{fig:homoqq} A QQ plot of the distribution of log torsion vs the standard Gaussian}
\end{figure}
We see that the fit is quite good.
\subsection{Volume}
Now, we look at the distribution of hyperbolic volumes of mapping tori of automorphisms of a surface of genus $2.$ We should remind the reader that in this setting our results are a lot weaker: we can show that the volume grows roughly linearly, but there is certainly no ``multiplicative ergodic theorem'', and, indeed, since the proofs of such are usually based on subadditivity properties of the functions considered, while hyperbolic volumes are \emph{not} subadditive\footnote{The author would like to thank Jeff Brock for alerting the author this might be true}, the proof of any such result for hyperbolic volume would require a new idea.Listing \ref{schleimerlst}, for which the author would like to thank Saul Schleimer, gives an example of non-subadditivity.
\begin{lstlisting}[caption={Non-subadditive volume},label=schleimerlst]
sage: S = snappy.twister.Surface('S_1_1')
sage: w = 'abbbbb'
sage: W = w.swapcase(); W
'ABBBBB'
sage: M = snappy.twister.Surface.bundle(S, w); M.volume()
2.02988321282
sage: M = snappy.twister.Surface.bundle(S, W); M.volume()
2.0298832128
sage: w + W
'abbbbbABBBBB'
sage: M = snappy.twister.Surface.bundle(S, w + W); M.volume()
6.0669922401
\end{lstlisting}
\begin{remark}
\label{futrem}
It was pointed out by David Futer that there is a number of ``scientific'' ways to construct families of examples.
\begin{enumerate}
\item It is a result of R.~Penner \cite{pennercurves} that if $a, b$ is a pair of curves whose union fills $F,$ then (abusing notation, so $a$ means a right-handed Dehn twist around $a$ and $b$ a right-handed twist around $b$), words of the form $a^m b^n$ gives pseudo-Anosov diffeomorphism.
\item Even simpler, since Dehn twists generate the mapping class group, products of (powers of) Dehn twists are pseudo-anosov.
\end{enumerate}
\end{remark}
With these caveats in mind, let's look at some picture (the experimental setup is the same as before: we do $1000$ experiments with each (non-reduced) word length from $100$ to $1000,$ in steps of $10.$ Firs, in Figure \ref{fig:volmean} we study the expectation of the hyperbolic volume. A single look should be enough to convince us that, submultiplicativity or no, there is a result there. A regression give the following empirical relationship:
\[
\boxed{\mbox{expected hyperbolic volume} = -0.105385 + 0.665259 \mbox{\ word length}}
\]
\begin{figure}
\centering
\includegraphics[width=0.5\textwidth]{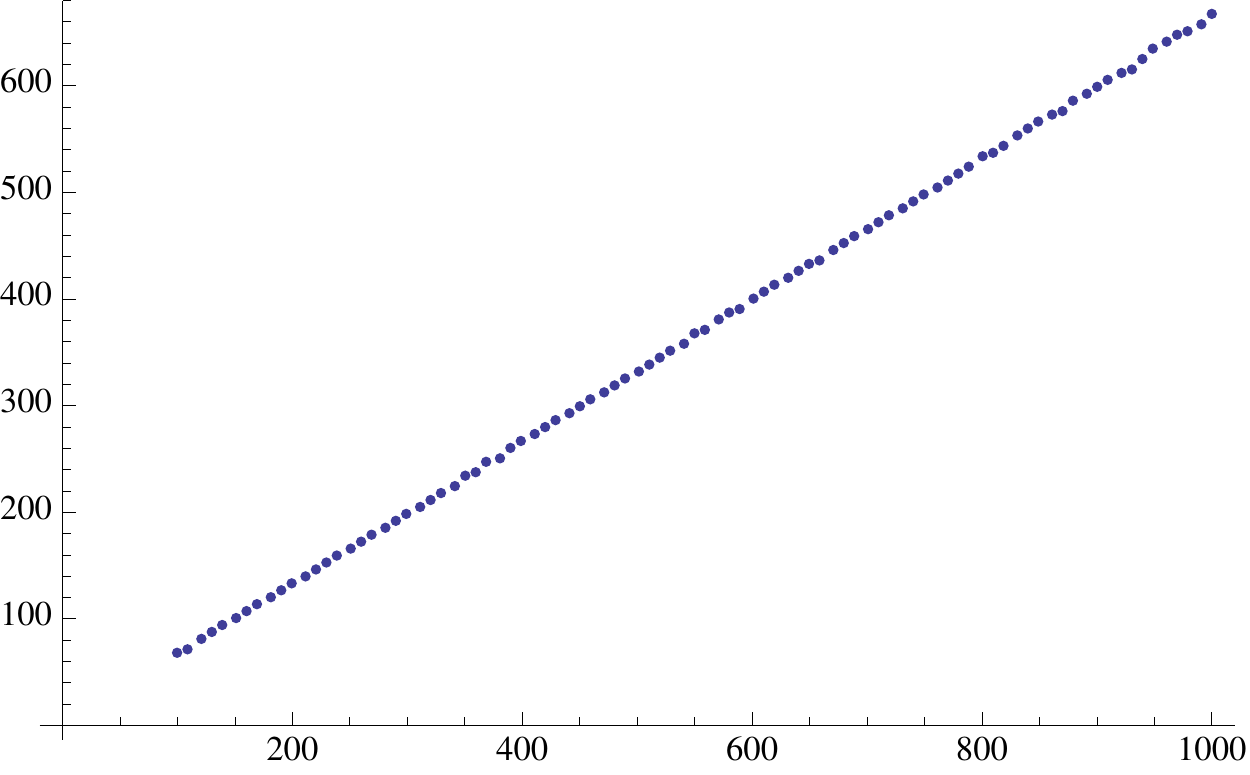}
\caption{\label{fig:volmean} The mean volume of random monodromies in the mapping class group of the surface of genus $2.$}
\end{figure}
Let's now look at the variances (to see if we should be ambitious, and try to prove a central limit theorem, as well). This is the content of Figure \ref{fig:volvar}, which shows that a central limit theorem may well be in the cards.
\begin{figure}
\centering
\includegraphics[width=0.5\textwidth]{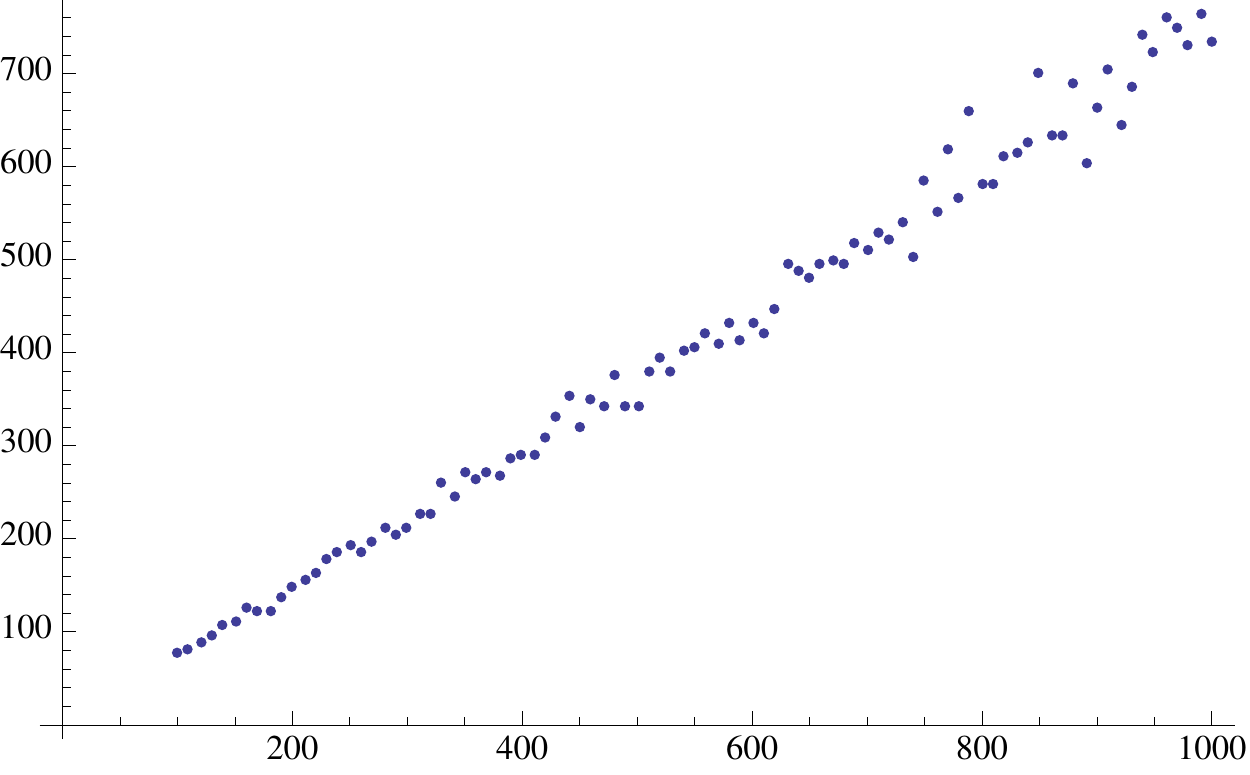}
\caption{\label{fig:volvar} The variance of volumes as a function of word length}
\end{figure}
Finally, let's look at the distribution of volumes for a fixed word length. First, a histogram (Figure \ref{fig:volhist}), which should bolster our confidence, and then a quantile-quantile plot (Figure \ref{fig:volqq}).
\begin{figure}
\centering
\includegraphics[width=0.5\textwidth]{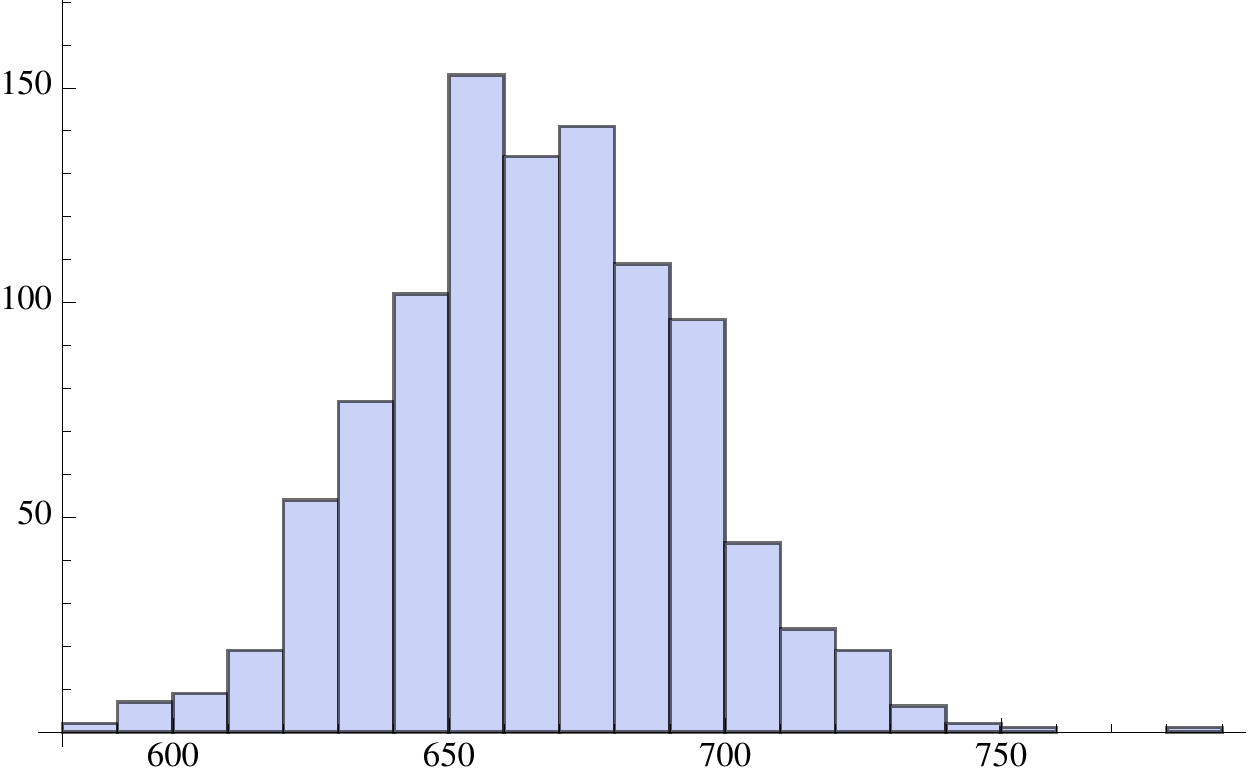}
\caption{\label{fig:volhist} Distribution of volumes of unreduced words of length $1000$}
\end{figure}
\begin{figure}
\centering
\includegraphics[width=0.5\textwidth]{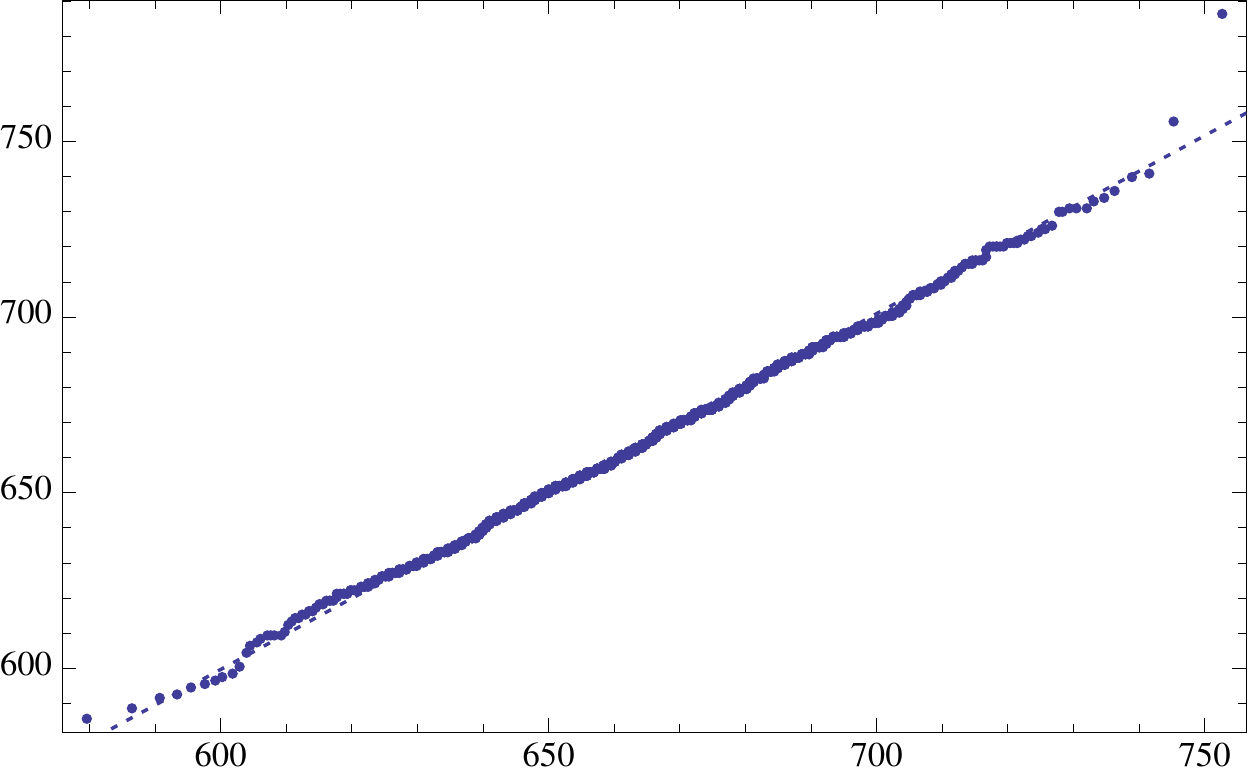}
\caption{\label{fig:volqq} QQ plot of volumes of unreduced words of length $1000$ vs the standard Gaussian}
\end{figure}
The results seem to indicate that there is an ergodic \emph{and} a central limit theorem to be discovered. We state this as a conjecture:
\begin{conjecture}
\label{volconj}
For any fixed generating set $\Gamma$ of the mapping class group of the surface of genus $g,$ let $\phi_N$ be given by a random word of length $N.$ Then, if $V(\phi_N)$ is the volume of the mapping torus of $\phi_N,$ then there exists a constant $c(\Gamma)$ such that 
$V(\phi_N)/N$ converges almost surely to $c(\Gamma).$ Furthermore, the quantity $\frac{1}{\sqrt{N}}(V(\phi_N) - N c(\Gamma))$ converges in distribution to $N(0, \sigma^2),$ for some positive $\sigma = \sigma(\Gamma).$
\end{conjecture}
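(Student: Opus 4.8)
\emph{Proof proposal.} The plan is to realize $V(\phi_N)$ as a Birkhoff-type sum of a stationary, exponentially weakly-dependent sequence of uniformly bounded random variables, and then invoke the classical pointwise ergodic theorem and the central limit theorem for such sequences. The geometric input is the Minsky/Namazi--Souto bi-Lipschitz model for $M_{\phi_N}$: the hyperbolic metric on the infinite cyclic cover $S\times\reals$ of $M_{\phi_N}$ is, up to a uniformly bi-Lipschitz change, assembled from a combinatorial model attached to a bi-infinite hierarchy (a resolution of a Teichm\"uller-type geodesic) that shadows the $\phi_N$-axis. A fundamental domain for the $\integers$-action has volume $V(\phi_N)$ and is cut by the hierarchy into roughly $N$ consecutive ``slabs'' $B_1,\dots,B_N$, one per generator of the defining word, so that $V(\phi_N)=\sum_{k=1}^N \vol(B_k) + (\text{boundary identification terms})$. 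Thin Margulis tubes, where the model is most delicate, each carry only a universally bounded amount of volume and there are $O(1)$ of them per slab, so they do not obstruct this decomposition; the bulk of the volume lies in the thick part, which has bounded local geometry.

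First I would make precise the claim that $\vol(B_k)$ depends, up to an error decaying exponentially in $j$, only on the letters $\gamma_{i_{k-j}},\dots,\gamma_{i_{k+j}}$ in a window of radius $j$ about position $k$. The heuristic is hyperbolicity of the curve complex and contraction of subsurface projections (Masur--Minsky): the portion of the hierarchy governing $B_k$ is coarsely determined by a bounded piece of the $\phi_N$-axis, which in turn shadows a bounded piece of the defining word outside an event of exponentially small probability (Maher--Tiozzo decay for the random walk). Granting this ``exponential localization'', $\{\vol(B_k)\}_k$ becomes, up to exponentially small coupling error, a bounded function of a finite window of the i.i.d.\ generator sequence, hence a stationary process with summable correlations. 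Ergodicity of the Bernoulli shift then yields $V(\phi_N)/N \to c(\Gamma):=\lim_N \tfrac1N\,\mathbb{E}[V(\phi_N)]$ almost surely, the boundary identification terms being $O(1)$ in probability; and the CLT for $m$-dependent (or exponentially mixing, via a Gordin-style martingale approximation) sequences gives $\tfrac1{\sqrt N}\bigl(V(\phi_N)-Nc(\Gamma)\bigr)\Rightarrow N(0,\sigma^2)$. Positivity $\sigma>0$ should come from a non-degeneracy argument: exhibit two finite words whose insertion into a random word changes the volume by genuinely different amounts with positive probability, so that $\vol(B_k)$ is not almost surely cohomologous to a constant. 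In the punctured-torus case this is visible from the explicit dependence of the ideal tetrahedron shapes on neighbouring letters of the $LR$-sequence, and for higher genus from the sensitivity of the thick-part model to the local combinatorics.

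The main obstacle --- and the reason the statement is a conjecture --- is the passage from a \emph{bi-Lipschitz} model to one that is asymptotically \emph{isometric} at the level of volume. Brock's theorem and the Minsky/Namazi--Souto models control volume only up to a multiplicative constant, i.e.\ with error $O(N)$, whereas the law of large numbers needs $o(N)$ and the CLT needs $o(\sqrt N)$. Closing this gap seems to require new geometric input: either (i) a geometric-limit/Benjamini--Schramm argument showing that $M_{\phi_N}$, based at a uniformly random point, converges geometrically to a unimodular random hyperbolic $3$-manifold fibering over $\reals$ with a well-defined ergodic volume density --- so that volume becomes genuinely additive in the limit --- together with a quantitative rate of convergence fine enough to control the $\sqrt N$ scale; or (ii) a Schl\"afli/Neumann--Zagier variational estimate that treats the long mapping torus as a Dehn filling of a fixed drilled model manifold along curves whose lengths are comparable to the slab size, bounding each filling correction by $O(1/L^2)$. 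I expect route (i) to be the more robust: once local geometric convergence and exponential decay of correlations are established, the ergodic-theoretic scaffolding (Birkhoff, Gordin's CLT) is entirely standard.
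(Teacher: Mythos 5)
You should first note that the statement you are proving is stated in the paper as Conjecture~\ref{volconj}: the paper offers no proof, only experimental evidence (the regression of mean volume against word length, the variance plot, and the histogram/QQ plots), and it explicitly flags the obstruction --- hyperbolic volume is \emph{not} subadditive under concatenation of monodromies (Listing~\ref{schleimerlst}), so the usual subadditive/multiplicative ergodic scaffolding does not apply and, in the paper's words, a proof ``would require a new idea.'' What \emph{is} proved in the paper (Theorem~\ref{volgrowth} and Corollary~\ref{volcor}, via Brock's theorem together with Karlsson--Margulis, or alternatively the Namazi--Souto model as in Section~\ref{heegeom}) is only that the Gromov norm/volume lies almost surely in an interval $(nC_1,nC_2)$, i.e.\ coarse linear growth with multiplicative constants.

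Your proposal does not close this gap, and in fact you correctly identify the missing ingredient yourself in your final paragraph. The decisive step --- ``exponential localization,'' i.e.\ that $\vol(B_k)$ is, up to an error exponentially small in the window size, a function of a bounded window of letters, \emph{at the level of exact hyperbolic volume} --- is precisely what no existing technology provides. Brock's theorem and the Minsky/Namazi--Souto bi-Lipschitz models control volume only up to uniform multiplicative (hence $O(N)$ additive) error, which is too coarse for the law of large numbers (needs $o(N)$) and far too coarse for the CLT (needs $o(\sqrt N)$ cumulative error over the $N$ slabs); your claim that the boundary identification terms are ``$O(1)$ in probability'' is likewise unsupported, and Schleimer's example shows concatenation genuinely perturbs volume. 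The positivity of $\sigma$ is also only asserted heuristically. So the argument as written proves nothing beyond Theorem~\ref{volgrowth}, which is already in the paper; what you have is a reasonable research program (your route (i), geometric/Benjamini--Schramm convergence with quantitative rates, is indeed the kind of new input that would be needed), not a proof, and the statement should remain labelled a conjecture.
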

\subsection{Volume and homology}
Wolfgang L\"uck in \cite{luckbook} and N. Bergeron and A. Venkatesh in  \cite{bergeronasymptotic} conjecture that for towers of congruence covers, the ratio of log of the torsion in homology divided by the volume approaches a limit. We see that (at least empirically) a similar phenomenon holds for random mapping tori with "complicated" monodromy. In particular, for such a mapping torus, the ratio of volume to log of the size of the torsion in the first homology group approaches around $4.381$ for genus 2 bundles. In Table \ref{tab:volhom} below, we compute the growth rates for volume and log of torsion as a function of growth rates. It seems (though is not completely clear) that both the growth rates approach a limit as genus goes to infinity, and so, consequently, does their ratio. We will boldly state this as a conjecture:
\begin{conjecture}
\label{limconj}
The (expected) growth rates of volume and logarithm of torsion as a function of word length of monodromy approach both approach a finite limit as genus of the fiber goes to infinity.
\end{conjecture}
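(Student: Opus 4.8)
In each case ``growth rate'' refers to the random walk driven by the $2g+1$ Humphries generators, and the content of the conjecture is a bound \emph{uniform in the genus} $g$. Write $\lambda_1(g)\ge\dotsb\ge\lambda_g(g)>0$ for the positive Lyapunov exponents of the induced symplectic random walk on $\Sp(2g,\integers)$; it is Zariski dense since the Torelli images of the Humphries twists already generate $\Sp(2g,\integers)$, so Theorems~\ref{guivarchthm} and~\ref{torsionord} apply.

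\emph{Torsion.} By Theorem~\ref{torsionord} the log-torsion growth rate equals $a(g)=\sum_{i=1}^{g}\lambda_i(g)$, so the conjecture for torsion is exactly $\sup_g a(g)<\infty$. The elementary bounds fall short. Each Humphries twist acts on $H_1(S_g;\reals)$ by a symplectic transvection $T_u=I+N$ with $N=u\,\langle\,\cdot\,,u\rangle$, $\norm{N}_{\mathrm{op}}\le\norm u^{2}\le 2$ and $N^{2}=0$; hence each generator lies at bounded distance from $I$ in the symmetric space $\Sp(2g,\reals)/U(g)$, the walk has drift $\le C$, and Cauchy--Schwarz yields only $a(g)\le C\sqrt g$. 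Exterior powers do slightly better: since $\wedge^{2}N=0$ one has $\norm{\wedge^{k}(I+N)}_{\mathrm{op}}\le 1+k\norm N_{\mathrm{op}}$, so for every $k\le g$ the top exponent of the $\wedge^{k}$-cocycle satisfies $\lambda_1(g)+\dotsb+\lambda_k(g)=\lim_n\tfrac1n\mathbb E\log\norm{\wedge^{k}P_n}\le\log(1+2k)$, and in particular $a(g)=O(\log g)$. To obtain boundedness one wants the Lyapunov spectrum to decay summably, $\lambda_i(g)\le C\rho^{\,i}$ with $\rho<1$ uniform in $g$. The plan is to exploit the \emph{locality} of the generators --- $T_{[c_j]}$ perturbs only the span of $[c_{j-1}],[c_j],[c_{j+1}]$, and transvections along disjoint curves commute --- to show that the partial sums $\lambda_1(g)+\dotsb+\lambda_k(g)$ stay bounded uniformly in $g$ and $k\le g$; equivalently, that the normalized empirical measure of $\log A$ converges, as $g\to\infty$, to a limiting law with \emph{finite} first moment.

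\emph{Volume.} By Brock's theorem the volume growth rate is comparable to the Weil--Petersson drift $\ell_{WP}(g)$ of the walk on $\teich(S_g)$, which exists by the subadditive half of Theorem~\ref{karlssonmar}; and $\ell_{WP}(g)\le\max_i d_{WP}(o_g,T_{c_i}o_g)$ for any basepoint $o_g$, so it suffices to bound this maximum uniformly in $g$. The plan is to take $o_g$ a hyperbolic structure in which every Humphries curve has length in a fixed interval $[\ell_-,\ell_+]$ --- the Euler-characteristic count (the Humphries chain cuts $S_g$ into a bounded number of complementary faces, so there is no isoperimetric obstruction to all curves being short) makes such structures plausible to construct --- and then invoke Wolpert's estimates, by which a single Dehn twist around a curve of length $\le\ell_+$ displaces a point of $\teich$ by a WP distance bounded in terms of $\ell_+$ alone. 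Alternatively, pass to the pants graph, which is coarsely isometric to $(\teich,d_{WP})$ by Brock, and choose $o_g$ a pants decomposition adapted to the chain, so that $T_{c_i}$ alters $o_g$ only inside the bounded-complexity subsurface filled by $c_i$ together with its at most two neighbours in $o_g$, whence $d_P(o_g,T_{c_i}o_g)\le C$ for all $i$ and $g$. Two caveats must be handled: constructing the adapted basepoint needs care (the even-indexed chain curves traverse the surface, and the filler pants curves must be routed around them), and the comparison constants in Brock's theorem and in the pants-graph/WP comparison are a priori $g$-dependent --- the conjecture forces them to be uniform, at least in the upper-bound direction, which must be verified from the bi-Lipschitz-model proofs.

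\emph{The main obstacle} is the torsion side: proving $\sup_g\sum_{i=1}^g\lambda_i(g)<\infty$. The crude estimates give only $O(\log g)$, and the gap between ``the normalized spectrum of $\log A$ has a limit'' (which only yields $a(g)\sim cg$) and ``the unnormalized sum stays bounded'' is precisely the new phenomenon conjectured. I expect one needs a genuine ``stable'' description of the Humphries walk --- for instance a Benjamini--Schramm-type local limit of the pair (genus-$g$ surface, random monodromy) as $g\to\infty$ --- from which exponential decay of the Lyapunov spectrum could be read off; no such tool is presently available, which is why the statement is left as a conjecture.
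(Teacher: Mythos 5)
This statement is Conjecture \ref{limconj}: the paper does not prove it, and offers no argument beyond the experimental evidence of Table \ref{tab:volhom} (growth rates computed for genus $2$ through $14$, which appear to level off). So there is no proof of record to compare your strategy against, and your submission is, as you yourself say, a program rather than a proof; the honest acknowledgement that the key step is missing is consistent with the status of the statement in the paper.

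On the substance of your program: the torsion half is the right target, since by Theorem \ref{torsionord} (via Theorem \ref{guivarchthm}) the log-torsion growth rate is the sum of the positive Lyapunov exponents of the Humphries walk on $\Sp(2g,\integers)$, and your exterior-power estimate $\lambda_1(g)+\dotsb+\lambda_k(g)\le \log(1+2k)$ (using that each generator acts by a rank-one symplectic transvection, so $\wedge^2$ of its nilpotent part vanishes) is a genuine rigorous partial result -- it gives an $O(\log g)$ bound on the torsion growth rate, which is already more than the paper establishes, but it does not close the gap to uniform boundedness, and you correctly flag that nothing currently available (local limits, summable decay of the Lyapunov spectrum uniformly in $g$) delivers that. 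Two cautions on the volume half. First, the subadditive part of Theorem \ref{karlssonmar} gives existence of the Weil--Petersson drift, and a basepoint at which every Humphries twist has uniformly bounded WP displacement is plausible by the locality of the configuration, but Brock's comparison constant between WP translation length and volume is genus-dependent, so bounded WP drift does not by itself bound the volume growth rate uniformly in $g$; you note this, but it is not a ``caveat to be verified'' so much as a second open problem of the same order of difficulty as the torsion one (for the upper bound one could instead try to triangulate the mapping torus with a genus-independent bounded number of tetrahedra per Dehn twist, which would be a cleaner route than re-examining the bi-Lipschitz models). Second, the experiments in the paper use random \emph{positive} words in the Humphries twists (a semigroup walk), so any eventual proof should be stated for that model or shown to be insensitive to it. With these caveats, your assessment of where the difficulty lies agrees with why the paper leaves the statement as a conjecture.
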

\begin{remark}
In the setting of L\"uck''s, and Bergeron-Venkatesh \cite{luckbook,bergeronasymptotic} the ratio of the volume to logarithm of the torsion should approach $6\pi,$ which the numbers in the third column of Table \ref{tab:volhom} clearly do not. The boldest possible conjecture is that they approach $3\pi.$ The reader should not be perturbed by the difference of the numbers, since our model is really quite different\footnote{Stefan Friedl tells the author that there are good reasons to believe that the answer should, indeed, be $3\pi,$ though the evidence of Table \ref{tab:volhom} is certainly not definitive.}
\end{remark}
\begin{table}
\centering
\begin{tabular}{|c|c|c|c|}
genus & log torsion growth & volume growth & ratio \\\hline
2 & 0.15187 & 0.665295 & 4.38068 \\
3 & 0.163482 & 1.04903 & 6.41679 \\
4 & 0.16938 & 1.18395 & 6.9899 \\
5 & 0.172869 & 1.26366 & 7.30992 \\
6 & 0.175353 & 1.31758 &7.51387 \\
7 & 0.177159 & 1.35517 &7.649456 \\
8 & 0.178507 & 1.38492 & 7.75835 \\
9 & 0.179699 & 1.40891 & 7.84038 \\
10 & 0.180552 & 1.42325 & 7.88277 \\
11 & 0.181283 & 1.44077 & 7.94763 \\
12 & 0.181848 & 1.45311 & 7.99079 \\
13 & 0.182397 &  1.46369 & 8.02475\\
14 & 0.182987 & 1.47036 & 8.03532
\end{tabular}
\caption{\label{tab:volhom}Growth rates of log of torsion and hyperbolic volume}
\end{table}

\subsection{The Humphries generators for the Mapping Class Group of a closed surface}
\label{humphries}
Steve Humphries showed in \cite{humphries1979generators}that one has the following set of generators for the mapping class group of a surface of genus $g.$
First, we use the notation in Figure \ref{fig:humph}
\begin{figure}
\centering
\includegraphics[width=0.5\textwidth]{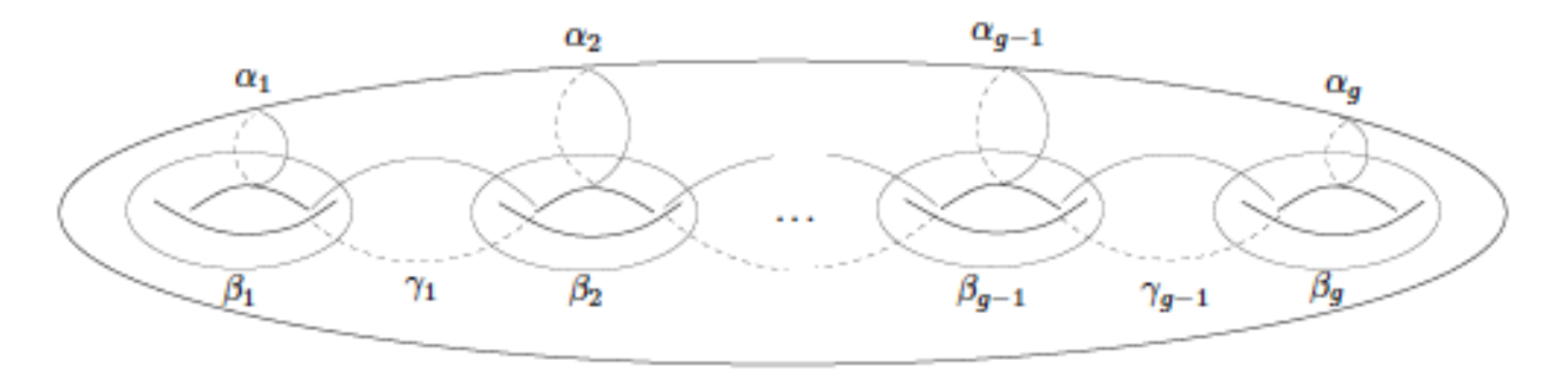}
\caption{\label{fig:humph} defining the Humphries generators}
\end{figure}
With the notation introduced in Figure \ref{fig:humph}, the Humphries generating set is the set of Dehn twists $\beta_1, \dotsc, \beta_g, \gamma_1, \dotsc, \gamma_{g-1}, \alpha_1, \alpha_2.$
With a little bit of work, we can figure out the action of the Humphries generators on the first homology of the surface (Joan Birman had previously showed  (see \cite{birman1971siegel}) that these matrices generated $\Sp(2g, \integers)$). We give the corresponding matrices below in Mathematica code:
\begin{lstlisting}
Ematrix[i_, j_, n_] := 
 Table[If[(k == i && j == l) || k == l, 1, 0], {k, 1, n}, {l, 1, n}]

BirmanY[g_, i_] := 
 Table[If[k == l, 1, If[k == i && l == g + i, -1, 0]], {k, 1, 
   2 g}, {l, 1, 2 g}]

BirmanU[g_, i_] := 
 Table[If[k == l, 1, If[l == i && k == g + i, 1, 0]], {k, 1, 
   2 g}, {l, 1, 2 g}]

BirmanZ[g_, i_] := 
 Table[If[k == l, 1, 
   If[k == i && l == g + i, -1, 
    If[k == i + 1 && l == g + i + 1, -1, 
     If[k == i && l == g + i + 1, 1, 
      If[k == i + 1 && l == g + i, 1, 0]]]]], {k, 1, 2 g}, {l, 1, 
   2 g}]

BirmanHump[g_] := 
 Union[Table[BirmanU[g, i], {i, 1, g}], 
  Table[BirmanZ[g, i], {i, 1, g - 1}], {BirmanY[g, 1], BirmanY[g, 2]}]
\end{lstlisting}
The command \lstinline$BirmanHump[g]$ produces the full list of images of the Humphries generators.
\section{Other generating sets}
The fact that the growth rate of torsion seems to be asymptotically independent of genus would seem to be more a statement about linear groups than about surfaces, and it would seem useful to conduct the experiment with other natural generating sets for the symplectic group, as well as for the special linear group (where the more natural quantity to study is the sum of the positive Lyapunov exponents). We conduct the study with the small generating sets for both groups: for the special linear group, the natural generating set is the one of cardinality $2$ discovered by Hua and Reiner. For the symplectic group, the natural generating set is the one discovered by Stanek. To keep the presentation consistent, we give Mathematica code to generate both generating sets, and then give our experimental results in Tables \ref{tab:sln} and \ref{tab:sp2n}.The results in those two tables are for the random products of \emph{positive} powers of the generators.

First, the Hua and Reiner (\cite{hua1949generators} generators:
\begin{lstlisting}
HRU2[n_] := 
 Table[If[i == j || (i == 1 && j == 2), 1, 0], {i, 1, n}, {j, 1, n}]
 
 HRU5[n_] := 
 Table[If[i - j == 1, 1, If[i == 1 && j == n, (-1)^(n - 1), 0]], {i, 
   1, n}, {j, 1, n}]
   
 hrgenlist[n_] := {HRU2[n], HRU5[n]}
\end{lstlisting}
The last command \lstinline$hrgenlist[n]$ produces the pair of Hua-Reiner generators for the special linear group.

Now, the Stanek generators (see \cite{stanek1963two}):
\begin{lstlisting}
R21[n_] := Table[
  If[i == j, 1, 
   If[i == 2 && j == 1, 1, If[i == n + 1 && j == n + 2, -1, 0]]], {i, 
   1, 2 n}, {j, 1, 2 n}]
      
Tk[n_, k_] := 
 Table[If[i == j, 1, If[i == n + k && j == k, 1, 0]], {i, 1, 
   2 n}, {j, 1, 2 n}]
   
Dd[n_] := Table[
  If[j - i == 1 && (i < n || (i >= n + 1 && i < 2 n)), 1, 
   If[i == n && j == n + 1 , -1, If[ i == 2 n && j == 1, 1, 0]]], {i, 
   1, 2 n}, {j, 1, 2 n}]
   
stanekgens[n_] := 
  If[n == 2 || n == 3, {R21[n], Tk[n, 1], Dd[n]}, 
   If[n == 1, hrgenlist[2], {R21[n] . Tk[n, 1], Dd[n]}]]
\end{lstlisting}

The command \lstinline$stanekgens[n]$ produces the Stanek generators of the symplectic group. for $\Sp(2, \integers)$ and $\Sp(3, \integers)$ the generating set has cardinality $3,$ and otherwise cardinality $2.$

Interestingly, the sum of the to $n$ Lyapunov exponents of the Stanek generators seems to quickly increase to the asymptote in the case of $\Sp(2n, \integers),$ while the sum of the positive Lyapunov exponents is \emph{decreasing} (not obviously to any asymptotic lower bound) for the Hua-Reiner generators.
\begin{table}
\centering
\begin{tabular}{|c|c|}
$n$ & sum of the positive Lyapunov exponents \\\hline
2 & 0.00108941 \\
3 & 0.00108922 \\
4 & 0.00108562 \\
5 & 0.00107511 \\
6 & 0.0010696 \\
7 & 0.00107539 \\
8 & 0.00106576 \\
9 & 0.00106297 \\
10 & 0.00106195 \\
11 & 0.00105847 \\
12 & 0.00105281 \\
13 & 0.00104964 \\
14 & 0.00104896 \\
15 & 0.00104435
\end{tabular}
\caption{\label{tab:sln}The sum of the positive Lyapunov exponents of the Hua-Reiner semigroup for $\SL(n, \integers)$}
\end{table}
\begin{table}
\centering
\begin{tabular}{|c|c|}
$n$ & $\log \det (M - I)$ \\\hline
2 & 0.132424 \\
3 & 0.140012 \\
4 & 0.0.191286 \\
5 & 0.192716 \\
6 & 0.193323 \\
7 & 0.19334 \\
8 & 0.193514 \\
9 & 0.193462 \\
10 & 0.193432 \\
11 & 0.193047 \\
12 & 0.193162 \\
13 & 0.193178 \\
14 & 0.19321 \\
15 & 0.0.193219
\end{tabular}
\caption{\label{tab:sp2n}The sum of expectation of the positive Lyapunov exponents of the Stanek semigroup for $\Sp(2n, \integers)$}
\end{table}
\section{Random Heegard Splittings}
\label{randomh}
As mentioned in the Introduction, this work has its genesis in the foundational paper of N. Dunfield and W.~P.~Thurston\cite{dunfield2006finite}. Let us recall Dunfield and Thurston's setup. We first pick an integer $g>1.$ Then, we take two handlebodies $H_1$ and $H_2$ of genus $g,$ and identify the boundary of $H_1$ to that of $H_2$ via a map $\phi$ -- it is well-known that the topology of the resulting $3$-manifold  $H_\phi^3$ depends only on the isotopy class of $\phi,$ so we can produce random manifolds by generating random elements $\phi$ from $\mcg_g,$ which is, indeed, what Dunfield and Thurston do. They then study a number of questions (they work in much lesser generality than we do in this paper, and their questions are for the \emph{whole} mapping class group. Below, we will try to indicate the appropriate level of generality.
\subsection{Homology} To compute the homology of $H_\phi^3,$ Dunfield and Thurston note that if $\iota_* : H_1(\partial H_g, \integers) \rightarrow H_1(H_g, \integers)$ is the natural map on homology induced by the inclusion map of the surface $\partial H_g$ into the handlebody, then, if the two copies of $H_g$ are identified by the map $\phi$ to produce a three-manifold $H_\phi^3,$ then the homology of $H_\phi^3$ is the quotient of $H_1(\partial H_g, \integers)$ by the subspace generated by $J=\ker \iota_*$ and $\phi_*^{-1}(J).$ They note that, with respect to the intersection pairing on $H_1(\partial H_g, \integers),$ the subspace $J$ is Lagrangian (since $J$ is generated by the $g$ meridians, which are pairwise disjoint). Dunfield and Thurston then use this observation and some hand-counting of Lagrangian subspaces to compute the asymptotic behavior of $H_1(H_\phi^3, \mathbb{F}_p),$ and thus deduce that the first Betti number of $H_\phi^3$ is generically equal to zero (so that $H_\phi^3$ is generically a rational homology sphere), while it is generically \emph{not} an integer homology sphere. Here we note that, by using somewhat more sophisticated methods we can get a much stronger result:
\begin{theorem}
\label{expgrowthDT}
If the gluing map $\phi$ is chosen uniformly at random from words of length $N$ in some generating set of $\mcg(g),$ the expected cardinality of the logarithm of the cardinality of the first homology group of $H_\phi^3$ is linear in $n.$ Furthermore, the logarithm satisfies a central limit theorem:
\[
\frac{1}{\sqrt{n}} \left(\log |H_1(H_\phi^3)| - n \lambda\right) \sim N(0, \sigma^2),
\]
for some positive constants $\lambda, \sigma.$ Furthermore the probability that $\left|\log |H_1(H_\phi^3)| - n \lambda\right| > m \lambda$ decays exponentially with $m.$
\end{theorem}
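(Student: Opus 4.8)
The plan is to identify $|H_1(H_\phi^3)|$ with the absolute value of the determinant of a single $g\times g$ block of the symplectic image of the gluing map, to recognize the logarithm of that determinant as the sum of the top $g$ singular values of a random symplectic product up to a lower-order error, and then to feed this into Guivarc'h's Theorem~\ref{guivarchthm}. Throughout, the generating set is symmetric, and since the symplectic representation carries $\mcg(g)$ onto $\Sp(2g,\integers)$, the image of our generating set is Zariski dense in $\Sp(2g)$ (more generally, it suffices that the gluing maps come from a subgroup with Zariski-dense Torelli image, as in the introduction). One first chooses a symplectic basis $a_1,\dots,a_g,b_1,\dots,b_g$ of $H_1(\partial H_g,\integers)\cong\integers^{2g}$ for which $J=\ker\iota_*=\langle b_1,\dots,b_g\rangle$, and writes the symplectic image of $\phi$ in $g\times g$ blocks as $M=\bigl(\begin{smallmatrix}A&B\\ C&D\end{smallmatrix}\bigr)$. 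Applying the automorphism $\phi_*$ to the Dunfield--Thurston presentation $H_1(H_\phi^3)\cong\integers^{2g}/(J+\phi_*^{-1}(J))$ recalled above identifies it with $\integers^{2g}/(J+\phi_*(J))$; since $\phi_*(J)$ is the column span of $\bigl(\begin{smallmatrix}B\\ D\end{smallmatrix}\bigr)$, the sublattice $J+\phi_*(J)$ is the column span of $\bigl(\begin{smallmatrix}B&0\\ D&I\end{smallmatrix}\bigr)$, of index $|\det B|$ in $\integers^{2g}$ when $\det B\neq 0$ and of infinite index otherwise. Thus $\log|H_1(H_\phi^3)|=\log|\det B|$ precisely when $H_\phi^3$ is a rational homology sphere. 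The locus $\{\det B=0\}$ is a proper subvariety of $\Sp(2g)$ (a generic symplectic matrix has invertible upper-right block), so by the escape-from-subvarieties estimates accompanying Theorem~\ref{equidist} (see \cite{rivin2013large,jouve2010splitting}) a random word of length $n$ has $b_1(H_\phi^3)>0$ with probability $O(c^n)$ for some $c<1$; all statements below are to be read on the complementary event, which does not affect convergence in law.

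Next one passes from $\det B_n$ to singular values. Let $M_n=\phi_{n,*}\in\Sp(2g,\integers)$ be the random product of length $n$, with singular values $\sigma_1(M_n)\ge\dots\ge\sigma_{2g}(M_n)$ and Lyapunov exponents $\ell_1>\dots>\ell_g>0>-\ell_g>\dots>-\ell_1$, the strict inequalities (simplicity of the spectrum) following from Zariski density by Goldsheid--Margulis \cite{gol1989lyapunov,goldsheid1996zariski}; put $\lambda:=\ell_1+\dots+\ell_g$. Writing $\omega_J=b_1\wedge\dots\wedge b_g\in\Lambda^g\integers^{2g}$ for the decomposable vector representing $J$, a direct computation gives $|\det B_n|=\bigl|\omega_J\wedge(\Lambda^g M_n\,\omega_J)\bigr|=\bigl|\langle f,\Lambda^g M_n\,\omega_J\rangle\bigr|$, where $f$ is the fixed nonzero functional $\eta\mapsto\omega_J\wedge\eta$ on $\Lambda^g\reals^{2g}$ (valued in $\Lambda^{2g}\reals^{2g}\cong\reals$). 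Because the Lyapunov spectrum of $M_n$ is simple, the cocycle $\Lambda^g M_n$ has a \emph{simple} top exponent equal to $\lambda$ (indeed $\omega_J$ lies in the $\Sp(2g)$-irreducible $g$-th fundamental summand of $\Lambda^g\reals^{2g}$, on which this exponent is achieved), so $\log\|\Lambda^g M_n\|=\sum_{i=1}^g\log\sigma_i(M_n)$ and the normalized iterates $\Lambda^g M_n\,\omega_J/\|\Lambda^g M_n\,\omega_J\|$ converge to the top Oseledets direction. The fixed point $[\omega_J]$ and the fixed hyperplane $\ker f$ correspond to proper subvarieties of the Lagrangian Grassmannian (the Lagrangians equal to, respectively non-transverse to, $J$), and the harmonic measure of a Zariski dense subgroup assigns measure zero to proper subvarieties and is, more quantitatively, Hölder regular near them (regularity of the Furstenberg measure: Goldsheid--Margulis, Guivarc'h--Raugi; see \cite{goldsheid1996zariski,guivarc2007actions}). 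Hence $\omega_J$ is in general position for the forward dynamics, $f$ is a.s.\ not orthogonal to the (random) limit direction, and one obtains
\[
\log|\det B_n|=\sum_{i=1}^{g}\log\sigma_i(M_n)+R_n,\qquad \mathbb{P}(|R_n|>t)\le Ce^{-ct},
\]
so in particular $R_n=o(\sqrt n)$ in probability. (Symmetry of the generating set makes the forward and backward walks equal in law up to relabeling, so it is immaterial whether one starts from $\phi_*$ or $\phi_*^{-1}$.)

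Finally one concludes. By Guivarc'h's Theorem~\ref{guivarchthm}, $\frac1{\sqrt n}\bigl(\log\sigma_i(M_n)-n\ell_i\bigr)_{i=1}^{2g}$ converges in law to a centered Gaussian vector whose covariance is non-degenerate; summing the first $g$ coordinates, $\frac1{\sqrt n}\bigl(\sum_{i=1}^g\log\sigma_i(M_n)-n\lambda\bigr)$ converges to $N(0,\sigma^2)$ with $\sigma^2>0$. Combined with the previous paragraph this gives $\frac1{\sqrt n}\bigl(\log|H_1(H_\phi^3)|-n\lambda\bigr)\Rightarrow N(0,\sigma^2)$; and since $0\le\log|\det B_n|\le g\log\|M_n\|\le gn\log C_0$ deterministically (with $C_0=\max_i\|\gamma_i^{\pm 1}\|$), bounded convergence gives $\mathbb{E}\log|H_1(H_\phi^3)|=n\lambda+o(\sqrt n)$ (the expectation being taken, per the above, over the rational-homology-sphere event). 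Because the per-step change of $\log\|M_n\|$ is bounded by $\log C_0$, the standard large-deviation estimates for norms of random matrix products (as used in \cite{rivin2008walks,MR2725508}, cf.\ \cite{guivarc2007actions}) then yield $\mathbb{P}\bigl(\bigl|\log|H_1(H_\phi^3)|-n\lambda\bigr|>m\lambda\bigr)\le Ce^{-cm}$, the claimed tail bound.

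The step I expect to be the main obstacle is the lower-bound half of the middle paragraph, namely $\log|\det B_n|\ge\sum_{i=1}^g\log\sigma_i(M_n)-o(\sqrt n)$ with high probability: the upper bound is the trivial minor estimate, but matching it amounts to the assertion that the \emph{fixed} Lagrangian $J$ is in general position with respect to the random dynamics on the Lagrangian Grassmannian, which rests on the (non-elementary) regularity theory of the Furstenberg measure rather than on anything combinatorial. A secondary point is that the central limit theorem concerns the \emph{ordered} singular values, so simplicity of the Lyapunov spectrum -- not merely its positivity -- is genuinely used, both to identify the correct constant $\lambda$ and to obtain the non-degeneracy of $\sigma^2$.
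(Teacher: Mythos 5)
Your proposal is correct in substance, and it reaches the paper's central observation through the same door: you reduce $|H_1(H_\phi^3)|$ to $|\det B_n|$ for the off-diagonal block of the symplectic image of the gluing map, and you recognize $\det B_n$ as a matrix coefficient of the induced action on the Lagrangian ($g$-isotropic) summand of $\bigwedge^g\reals^{2g}$, via $\omega_J\wedge(\Lambda^g M_n\,\omega_J)$. Where you diverge is the final analytic step. The paper checks that a Zariski-dense subgroup of $\Sp(2g,\reals)$ acts strongly irreducibly and proximally on that irreducible summand (Abels--Margulis--Soifer for proximal elements, Goldsheid--Margulis, plus Zariski-connectedness for strong irreducibility) and then directly quotes the central limit theorem and large deviation theorem for \emph{matrix coefficients} of random products in a strongly irreducible contracting representation (Bougerol--Lacroix, Chapter V); the coefficient-level CLT and the exponential tail come packaged together. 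You instead start from Guivarc'h's CLT for the singular-value vector (Theorem \ref{guivarchthm}) and then bridge from $\log\lVert\Lambda^g M_n\rVert=\sum_{i\le g}\log\sigma_i(M_n)$ down to the specific coefficient $\langle f,\Lambda^g M_n\,\omega_J\rangle$ using simplicity of the top exponent on the isotropic summand and regularity of the Furstenberg measure to keep the fixed Lagrangian $J$ in general position --- precisely the step you flag as the main obstacle. That bridge is exactly the technical content of the matrix-coefficient theorems the paper cites, so in effect you are reproving the black box by hand; the gain of your route is that it makes explicit what "general position of $J$" is doing and how simplicity of the Lyapunov spectrum enters, the gain of the paper's route is brevity and that no separate regularity argument is needed. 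Two minor points: your exponential bound on the event $\det B_n=0$ (equivalently $b_1>0$) is not part of the paper's proof of this theorem, but is consistent with its equidistribution results (Theorem \ref{equidist}); and your expectation statement should be extracted from the large-deviation estimate (uniform integrability) rather than from bounded convergence alone, though for the coarse claim that the expectation is linear in $n$ this is harmless.
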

\begin{remark} In the proof we only use the fact that $\Sp(2g, \integers)$ -- the image of $\mcg(g)$ under the Torelli map -- is Zariski-dense in $\Sp(2g),$ so our results apply without change to any subgroup of $\mcg(g)$ which has Zariski-dense Torelli image.
\end{remark}
\begin{proof}[Proof of Theorem \ref{expgrowthDT}]
To prove Theorem \ref{expgrowthDT} let's first think $J$ as the "standard" Lagrangian subspace of $\mathbb{R}^{2g},$ and let $K$ be the standard complementary Lagrangian subspace. Consider a symplectic map $\Sigma.$ In the basis $(J, K),$ we have
\[
\Sigma= \begin{pmatrix} A(\Sigma) && B(\Sigma) \\ C(\Sigma) && D(\Sigma) \end{pmatrix}.
\]
The Dunfield-Thurston description tells us that $H_1(H_\phi^3) \simeq \coker B(\phi_*),$ so this is the object we will study in the sequel.

Consider the action of $\Sp(2g, \reals)$ on the space of $g$-isotropic vectors of $\bigwedge^g \reals^{2g}$ (this is the span of basis of Lagrangian $g$-dimensional spaces in $\bigwedge^g \reals^{2g}.$ This action is well-known to be irreducible (see, e.g. \cite{procesilie,goodmanwallach}). Since $\Sp(2g, \reals)$ is an $\reals$-split algebraic group, the results of Abels, Margulis, and Soifer \cite{AbelsMargulisSoifer}[Section 6] together with the results of Goldsheid and Margulis \cite{GoldsheidMargulis89} show that any Zariski-dense subgroup contains a proximal element, and so its action is contracting. Since $\Sp(2g, \reals)$ is Zariski-connected, it follows that the action of any Zariski-dense subgroup on the $g$-isotropic subspace is \emph{strongly irreducible}. These to facts together imply that we can use the theory of random matrix products, specifically the Central Limit Theorem and the Large Deviation Theorem for the matrix coefficients of such random products, as found, for example, in \cite{bougerol1985products}[Chapter V], upon remarking that the determinant of $V(\Sigma)$ is just the $JK$th matrix coefficient of the induced action of $\Sigma$ on the $g$-isotropic subspace of $\bigwedge^g \reals^{2g}.$
\end{proof}

An amusing corollary of Theorem \ref{expgrowthDT} is the following:
\begin{corollary}
\label{matcor}
The complexity of $H_\phi^3$ with $\phi$ chosen as in the statement of Theorem \ref{expgrowthDT} grows linearly with the length $n$ of the walk.
\end{corollary}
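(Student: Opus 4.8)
Here ``complexity'' means the Matveev complexity $c(\cdot)$ (the Kneser--Matveev complexity of the introduction). The plan is to sandwich $c(H_\phi^3)$ between two linear functions of $n$: a \emph{deterministic} upper bound $c(H_\phi^3)\le C_1 n$, valid for every word $\phi$ of length $n$ and obtained by writing down an explicit triangulation, together with a \emph{probabilistic} lower bound $c(H_\phi^3)\ge C_2 n$, valid with probability tending to $1$, obtained by combining a general inequality bounding $\log|H_1|$ by the triangulation size with the linear growth of $\log|H_1(H_\phi^3)|$ supplied by Theorem \ref{expgrowthDT}.

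For the upper bound I would write $\phi=s_{i_1}\cdots s_{i_n}$ in the fixed generating set and build a (pseudo-)triangulation of $H_\phi^3$ with $O(n)$ tetrahedra. Fix once and for all a triangulation of the genus-$g$ handlebody whose restriction to $\partial H_g=\Sigma_g$ is a fixed triangulation $\tau$; since the generators $s_i$ form a fixed finite set of mapping classes, each $s_i$ can be interpolated by a \emph{layered} block of $\Sigma_g\times[0,1]$ using a number of tetrahedra depending only on $g$ and $S$. Stacking the $n$ blocks along the Heegaard surface in the order prescribed by the word and capping off with the two handlebodies yields a triangulation of $H_\phi^3$ with at most $C_1 n$ tetrahedra, $C_1=C_1(g,S)$. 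Since $c(M)$ never exceeds the number of tetrahedra of any (pseudo-)triangulation of $M$, this gives $c(H_\phi^3)\le C_1 n$.

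For the lower bound, suppose $c(H_\phi^3)=c$. With probability tending to $1$ the manifold $H_\phi^3$ is irreducible and is not one of the finitely many small exceptions ($S^3,\mathbb{RP}^3,L(3,1)$), so by Matveev's identification of complexity with minimal pseudo-triangulation size it carries a pseudo-triangulation $\mathcal{T}$ with at most $c$ tetrahedra; then $\mathcal{T}$ has $O(c)$ simplices in every dimension, and its boundary map $\partial_2\colon C_2\to C_1$ is an integer matrix with $O(c)$ rows and columns in which every \emph{column} has at most three nonzero entries, each of absolute value at most $2$. Next, on the event $E_n=\{\,c_- n\le \log|H_1(H_\phi^3;\integers)|\le c_+ n\,\}$, where $0<c_-<\lambda<c_+$ --- an event whose probability tends to $1$ by the large-deviation clause of Theorem \ref{expgrowthDT} (apply it with $m=\lceil n/2\rceil$) --- the group $H_1(H_\phi^3;\integers)$ is finite, so $b_1=0$ and $|H_1(H_\phi^3;\integers)|=|\mathrm{tors}\,H_1|$ divides some maximal-rank minor of $\partial_2$. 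Bounding the permanent of the entrywise-absolute-value matrix by the product of its column sums shows that every such minor has absolute value at most $6^{O(c)}$, so $\log|H_1(H_\phi^3;\integers)|\le C' c=C'\,c(H_\phi^3)$ with $C'$ universal. Combined with $\log|H_1(H_\phi^3;\integers)|\ge c_- n$ on $E_n$, this yields $c(H_\phi^3)\ge (c_-/C')\,n$ with probability tending to $1$; together with the upper bound, $c(H_\phi^3)\in\bigl((c_-/C')\,n,\ C_1 n\bigr)$ with probability tending to $1$, which is the assertion.

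The step I expect to be the main obstacle is the last one: converting the bare number $c(H_\phi^3)$ into a triangulation whose simplicial chain complex is genuinely sparse and of size $O(c)$. This rests on Matveev's (standard but somewhat technical) equivalence between complexity and minimal pseudo-triangulation size, together with the elementary but easily-botched point that it is the \emph{column} support of $\partial_2$, not its row support, that is bounded --- so that the determinant/permanent bound is $e^{O(c)}$ and not merely $e^{O(c\log c)}$, the latter being too weak to yield linearity. Everything else --- the layered-block construction, the permanent estimate, and the probabilistic input of Theorem \ref{expgrowthDT} --- is routine once this is in place.
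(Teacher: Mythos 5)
Your proposal is correct and follows the same overall strategy as the paper: a deterministic linear upper bound from an $O(n)$-tetrahedron triangulation built along the gluing word, and a probabilistic linear lower bound obtained by bounding the complexity from below by $\log|\mathrm{Tors}\,H_1|$ and invoking Theorem \ref{expgrowthDT}. The one real difference is in how that lower-bound inequality is obtained: the paper simply cites the Matveev--Pervova theorem (complexity $\geq \log_5$ of the torsion of $H_1$, see \cite{matper} and \cite{matalgo}), whereas you re-derive a weaker form of it by hand, passing through Matveev's equivalence between complexity and minimal singular-triangulation size and then bounding the maximal-rank minors of $\partial_2$ by a column-sum (permanent) estimate. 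Your route is more self-contained and correctly isolates the sparsity point that makes the bound $e^{O(c)}$ rather than $e^{O(c\log c)}$, but it costs you two extra hypotheses the citation route avoids: you must know $H_\phi^3$ is irreducible and not one of the small exceptional manifolds with probability tending to $1$ (true, but this itself needs a reference, e.g.\ Maher's linear growth of Heegaard distance \cite{maher2010linear} together with Hempel's criterion, or the hyperbolicity statements already used elsewhere in the paper), and you get a worse constant than $\log 5$. Two harmless slips: in a singular triangulation a face can have two or three of its edges identified, so the entries of $\partial_2$ can be $\pm 3$ rather than $\pm 2$ (the column $\ell_1$-norm bound, hence the argument, is unaffected), and for the lower bound you only need the one-sided event $\log|H_1|\geq c_- n$, for which the central limit clause already suffices without the large-deviation refinement.
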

Recall that the \emph{complexity} of a hyperbolic $3$-manifold $M^3$ is defined as the minimal number of simplices in a (semi-simplicial) triangulation of $M.$
\begin{proof}[Proof of Corollary \ref{matcor}]
The linear upper bound is immediate. For the lower bound, 
we use the result of Matveev and Pervova \cite{matper}, which states that the complexity of $M^3$ is bounded below by the logarithm base $5$ of the torsion in the first homology of $M^3$ -- see \cite{matalgo}[Theorem 2.6.2]
\end{proof}
\subsection{Betti numbers of covering spaces}
Dunfield and Thurston \cite{dunfield2006finite} raise the following question:
\begin{question} Suppose $N_\phi$ is a random Heegard splitting (in the usual sense), and suppose $\tilde{N_\phi}$ has a Galois covering space with deck group $Q.$ What is the probability that $\tilde{N_\phi}$ has first nonzero first Betti number?
\end{question}
Dunfield and Thurston show that the probability tends to $0$ for a random gluing map $\phi$ under the (obviously restrictive) conditions that the genus of the handlebody equals $2$ and the deck group $Q$ is abelian -- their proof is an elaboration of their argument that a random Heegard splitting is a homology sphere, and is an explicit bare-hands argument.

A refinement of the proof of Theorem \ref{expgrowthDT} gives the result in much greater generality:
\begin{theorem}
\label{expgrowthDT2}
Let $Q$ be a finite \emph{solvable} group. Then the probability that the $3$-manifold $N_\phi$ obtained from a random Heegard splitting of genus $g > 1$ has $b_1 > 0$ tends to $0$ (exponentially with the length of the word defining the gluing map).
\end{theorem}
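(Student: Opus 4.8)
The plan is to reduce the statement to the theory of random matrix products on an appropriate induced representation, exactly as in the proof of Theorem~\ref{expgrowthDT}, but now carried out for a finite Galois cover rather than for the manifold itself. First I would recall the covering-space setup: a surjection $\pi_1(N_\phi)\twoheadrightarrow Q$ corresponds to a $Q$-cover $\tilde N_\phi$, and by transfer (or by the Shapiro lemma for group homology) the rational homology $H_1(\tilde N_\phi,\ratls)$ decomposes according to the irreducible $\ratls$-representations of $Q$. Since $Q$ is \emph{solvable}, every irreducible representation is monomial (Shoda/Blichfeldt), i.e. induced from a one-dimensional representation of a subgroup; this is the structural fact that makes the solvable case tractable. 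So it suffices to bound, for each such character $\chi$ of a subgroup $H\le Q$ of index $d$, the probability that the $\chi$-isotypic piece of $H_1(\tilde N_\phi)$ is nonzero.

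Next I would linearize. As in Theorem~\ref{expgrowthDT}, write the Heegaard monodromy in block form $\phi_* = \left(\begin{smallmatrix} A & B \\ C & D\end{smallmatrix}\right)$ with respect to the standard Lagrangian $J$ (the meridian subspace) and a complementary Lagrangian $K$, so that $H_1(N_\phi,\integers)\simeq\coker B(\phi_*)$. Passing to the $Q$-cover replaces the $2g$-dimensional symplectic space by a larger symplectic $\ratls[Q]$-module (the first homology of $\partial$ of the genus-$dg+1-d$ cover-of-handlebody, with its $\ratls[Q]$-structure), and $H_1(\tilde N_\phi,\ratls)$ is the cokernel of the corresponding "off-diagonal block" map on this bigger module. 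The $\chi$-isotypic part of this cokernel is nonzero precisely when the determinant of a certain matrix — the off-diagonal block of $\phi_*$ acting on the $\chi$-isotypic summand — vanishes over the field $\ratls(\zeta)$ generated by the values of $\chi$. The key point, following the $g$-isotropic / $\bigwedge^g$ discussion in the proof of Theorem~\ref{expgrowthDT}, is that this determinant is, up to nonzero scalars, a single matrix coefficient of the induced action of $\phi_*$ on an exterior-power–type representation of the relevant classical group, and that this induced action is \emph{strongly irreducible and proximal} because the Torelli image is Zariski dense in $\Sp(2g)$ and hence its image under any nontrivial algebraic summand of the induced representation is Zariski dense there too (using Zariski-connectedness of $\Sp(2g)$ exactly as before, and that induction of a Zariski-dense subgroup to a finite-index overgroup stays Zariski dense). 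Then the Large Deviation Theorem for matrix coefficients of random products (\cite{bougerol1985products}[Chapter V], as cited in the proof of Theorem~\ref{expgrowthDT}, or \cite{guivarc2008spectrum}) gives that the probability that this coefficient is smaller than $e^{-cn}$ — in particular that it vanishes — decays exponentially in $n$. Finally I would take a union bound over the finitely many pairs $(H,\chi)$, which is harmless since $Q$ is fixed and finite, and conclude that $\Pr[b_1(\tilde N_\phi)>0]$ decays exponentially in the word length.

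The main obstacle I anticipate is the bookkeeping in the second step: verifying that after induction to the $Q$-cover the relevant "B-block vanishes" condition really does correspond to the vanishing of a single matrix coefficient of a strongly irreducible proximal action, rather than of some more complicated minor or of a reducible action. Concretely, one must check that each $\chi$-isotypic summand of the ambient symplectic (or, for $\chi$ not self-dual, unitary/orthogonal) $\ratls(\zeta)[Q]$-module carries an action of a classical group in which $\Sp(2g,\integers)$ still has Zariski-dense image, and that the meridian-Lagrangian is carried to an honest isotropic subspace in each summand so that the exterior-power argument of Theorem~\ref{expgrowthDT} applies verbatim. This is where solvability of $Q$ is used twice — once to get monomiality and once to keep the summands' structure group a single classical group; it is plausible that a careful argument needs only that $Q$ is such that $\ratls[Q]$ has no exotic simple factors, but I would state and prove it for solvable $Q$ as claimed. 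Everything after that step — strong irreducibility, proximality, the large-deviation estimate, the union bound — is routine given the machinery already invoked in the excerpt.
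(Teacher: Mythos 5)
There is a genuine gap at the heart of your argument, at the point where you claim that strong irreducibility and proximality of the action on each $\chi$-isotypic piece follow ``because the Torelli image is Zariski dense in $\Sp(2g)$ and hence its image under any nontrivial algebraic summand of the induced representation is Zariski dense there too.'' The action of the mapping class group (more precisely, of the finite-index subgroup preserving the cover data) on $H_1(\tilde N_\phi,\ratls)$ is \emph{not} obtained by composing the Torelli representation with an algebraic representation of $\Sp(2g)$: it is a genuinely new (Prym-type) representation on the homology of the finite cover, and Zariski density of the image in $\Sp(2g,\integers)$ gives no a priori control over the image in the classical group attached to a $\chi$-isotypic summand. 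Establishing that this image is large is exactly the hard step, and it is the step the paper addresses by invoking an adaptation of the arithmeticity results of \cite{grunewald2013arithmetic}: the image acting on each irreducible piece is arithmetic (a lattice in the corresponding symplectic/classical group), under the technical ``$\phi$-redundancy'' condition, which holds precisely when $Q$ is solvable. This is also the actual role of solvability in the theorem; your two proposed uses of it (monomiality of irreducible representations, and keeping the summands' structure group a single classical group) are not what is needed, and monomiality in particular does nothing to produce Zariski density of the image.

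Once that density/arithmeticity input is supplied, the rest of your outline (isotypic decomposition of the homology of the cover, reduction to the vanishing of a block determinant viewed as a matrix coefficient of a strongly irreducible proximal action, the large deviation estimate from \cite{bougerol1985products}, and a union bound over the finitely many irreducibles of the fixed group $Q$) does match the intended reduction to the argument of Theorem \ref{expgrowthDT}. So the structure is right, but the proposal as written assumes away the one point that requires a substantial theorem, and the justification you offer for it (functoriality of Zariski density under ``induction'') is not valid in this setting.
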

\begin{proof}[Proof sketch]
The rational homology of the $Q$ cover $\tilde{N_\phi}$ of  $N_\phi$ splits into irreducible representations of $Q,$ and the proof of Theorem \ref{expgrowthDT2} would follow immediately by the same argument as that of Theorem \ref{expgrowthDT} \emph{if} we knew that the action of the mapping class group on each irreducible were a Zariski dense subgroup of the symplectic group. An adaptation of the argument of \cite{grunewald2013arithmetic} (where the setup is slightly different) shows that the action is \emph{arithmetic} (that is, a lattice in the corresponding symplectic group), under a technical condition ("$\phi$-redundancy") which is satisfied when $Q$ is solvable.
\end{proof}
\subsection{Other topological invariants}
In their foundational paper \cite{namazisoutoheegard} H. Namazi and J. Souto show that a manifold $M_{f^n}^3$ obtained by gluing two handlebody of genus $g$ by a (sufficiently high) power of a ``generic pseudo-Anosov'' has Heegaard genus $g$ and rank of fundamental group equal to $g.$ Since a random map $\phi$ given by a sufficiently long word in (some) generators of $\mcg(g)$ has arbitrary high power of any given element $f$ as a subword, we have 
\begin{theorem}
\label{genusrank} manifold obtained by a random Heegaard genus of genus $g$ by a random gluing map has both Heegaard and rank of the fundamental group equal to $g.$
\end{theorem}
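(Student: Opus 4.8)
The plan is to prove the two trivial upper bounds first, reduce the statement to a single lower bound on the rank, and then obtain that lower bound by locating a high power of a fixed generic pseudo-Anosov inside the random gluing word and feeding the result into the Namazi--Souto theorem \cite{namazisoutoheegard}.

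\textbf{Reduction.} The upper bounds hold for every gluing map. By construction $H_\phi^3$ carries a genus-$g$ Heegaard splitting, so its Heegaard genus is at most $g$. Moreover, if $M=V_1\cup_\Sigma V_2$ is any genus-$h$ splitting, then by van Kampen $\pi_1(M)=\pi_1 V_1 *_{\pi_1\Sigma}\pi_1 V_2$ and, since $\pi_1\Sigma\twoheadrightarrow\pi_1 V_i$ for $i=1,2$, every element of $\pi_1 V_2$ equals the class of some loop in $\Sigma$, hence lies in the image of $\pi_1 V_1=F_h$; thus $\pi_1(M)$ is generated by $h$ elements and $\operatorname{rank}\pi_1(M)\le\text{Heegaard genus}(M)\le g$ for $M=H_\phi^3$. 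Consequently it suffices to prove that, with probability tending to $1$ as $N\to\infty$, $\operatorname{rank}\pi_1(H_\phi^3)\ge g$; this squeezes both invariants to equal $g$.

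\textbf{The subword lemma.} Fix once and for all a pseudo-Anosov $f$ whose stable and unstable laminations lie in the Masur domains of the two standard handlebodies $H_1,H_2$ (the hypothesis of the Namazi--Souto theorem, satisfied by a generic $f$), write $f$ as a word $w_f$ in the fixed symmetric generating set $S$, let $n_0=n_0(f)$ be the Namazi--Souto threshold, and set $L=n_0|w_f|$. Partitioning a uniformly random length-$N$ word into $\lfloor N/L\rfloor$ consecutive length-$L$ blocks, the events ``this block equals $w_f^{\,n_0}$ letter by letter'' are independent of probability $|S|^{-L}>0$, so $w_f^{\,n_0}$ occurs as a subword with probability at least $1-(1-|S|^{-L})^{\lfloor N/L\rfloor}$, which goes to $1$ exponentially fast (this is a baby case of Erd\H{o}s--R\'enyi \cite{erdosrenyiruns}, and one can even force an occurrence in the central third, so that the complementary prefix $\phi_1$ and suffix $\phi_2$ each have length $\ge N/3$). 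On this event write $\phi=\phi_2\,w_f^{\,n_0}\,\phi_1$; re-marking the boundaries of $H_1$ and $H_2$ by $\phi_1$ and $\phi_2^{-1}$ identifies $H_\phi^3$ with $H_1'\cup_{f^{\,n_0}}H_2'$, where $H_1',H_2'$ are genus-$g$ handlebodies whose disc sets are the $\phi_1$- and $\phi_2^{-1}$-images of the standard ones.

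\textbf{Applying Namazi--Souto and the obstacle.} By \cite{namazisoutoheegard}, the manifold $H_1'\cup_{f^{\,n_0}}H_2'$ has Heegaard genus and $\pi_1$-rank equal to $g$ \emph{provided} $\lambda^\pm(f)$ lie in the Masur domains of $H_1'$ and $H_2'$; equivalently, $\lambda^\pm(f)$ must avoid the $\phi_i$-translates of the limit set $\Lambda\subset\mathcal{PML}$ of the standard handlebody group, i.e.\ $\phi_i^{-1}\lambda^\pm(f)\notin\Lambda$ for $i=1,2$. This is where the real work lies: $\phi_1,\phi_2$ are long (essentially unconstrained) random words, so $\phi_i^{-1}\lambda^\pm(f)$ are random walk translates of fixed laminations, whose laws converge weakly to the Kaimanovich--Masur harmonic measure $\nu$ on $\mathcal{PML}$ \cite{kaimanovich1996poisson}; one then needs $\nu(\Lambda)=0$ -- the nullity of the handlebody limit set, which is exactly the mechanism behind ``a random gluing is high distance'' (cf.\ Maher \cite{maher2010linear}) -- whereupon the Namazi--Souto hypotheses hold with probability $\to 1$, giving $\operatorname{rank}\pi_1(H_\phi^3)\ge g$ and finishing the proof. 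I expect this verification that genericity survives the random re-marking (the estimate $\nu(\Lambda)=0$ together with the weak-convergence statement) to be the only non-soft point; the Heegaard-genus half on its own could alternatively be extracted from Maher's linear growth of Hempel distance \cite{maher2010linear} and incompressibility arguments in the spirit of \cite{hartshornhee}, but the rank equality genuinely seems to require the Namazi--Souto geometric model.
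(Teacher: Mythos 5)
You take essentially the same route as the paper: the paper's entire proof consists of the remark that a long random word contains the block $w_f^{\,n_0}$ for a fixed generic pseudo-Anosov $f$, followed by an appeal to Namazi--Souto \cite{namazisoutoheegard}; your reduction to the lower bound $\operatorname{rank}\pi_1\ge g$ and your subword lemma are exactly that argument made explicit. Where you go beyond the paper is your ``obstacle'' paragraph, and you are right that this is where the real content sits: writing $\phi=\phi_2 w_f^{\,n_0}\phi_1$ exhibits $H^3_\phi$ as a gluing by $f^{\,n_0}$ only after re-marking the handlebodies by the random prefix and suffix, so the Masur-domain hypothesis on $\lambda^{\pm}(f)$ must be re-verified for the translated disc sets. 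The paper's one-sentence proof silently skips this; your proposed fix (weak convergence of the translates to Kaimanovich--Masur harmonic measure \cite{kaimanovich1996poisson} together with nullity of the handlebody limit set, which is the mechanism behind Maher's distance growth \cite{maher2010linear}) is the correct way to close it, though in your write-up it remains a flagged, unproven step rather than a proof.

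One further point your sketch does not address, and which you should: the Namazi--Souto threshold is not a function of $f$ alone but of the triple (pseudo-Anosov, pair of handlebodies), so after re-marking it depends on $\phi_1,\phi_2$; fixing $L=n_0(f)\lvert w_f\rvert$ before sampling the word is therefore circular as stated. To repair this you need a uniform version of the Namazi--Souto statement in which the threshold depends only on a lower bound for the separation (say in the curve complex or in $\mathcal{PML}$) between $\lambda^{\pm}(f)$ and the disc sets of the re-marked handlebodies -- a bound that then holds with probability tending to $1$ by the same Maher-type estimate -- or some equivalent device. With that caveat, your argument is, if anything, more complete than the paper's own proof, whose justification does not go beyond the subword observation.
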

Another natural question (closely related to Dunfield and Thurston's set of questions) is whether a random Heegaard splitting is Haken. Unfortunately, this seems to be a hard problem, but what we can say is that if an incompressible surface exists, it is of a very high genus. This follows from two facts. The first, is the following theorem of Kevin Hartshorn \cite{hartshornhee}:
\begin{theorem}[K. Hartshorn]
Let $M$ be a Haken $3$-manifold containing an imcompressible surface of genus $g.$ Than \emph{any} Heegaard splitting of $M$ has translation distance \emph{at most} $2g.$
\end{theorem}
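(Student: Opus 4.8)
The plan is to establish Hartshorn's bound by the standard sweepout argument. Write the given Heegaard splitting as $M = V \cup_\Sigma W$, with $V,W$ the two compression bodies (handlebodies when $M$ is closed), $K_V,K_W$ their spines, and disk sets $\mathcal D_V,\mathcal D_W\subseteq\mathcal C(\Sigma)$, so that the translation distance is $d_{\mathcal C(\Sigma)}(\mathcal D_V,\mathcal D_W)$; since $M$ is Haken it is irreducible, and we may take the incompressible surface $S$ to have genus $g\geq 1$. First I would fix a sweepout: a Morse function $f\colon M\to[-1,1]$ with $f^{-1}(-1)=K_V$, $f^{-1}(1)=K_W$, $f^{-1}(0)=\Sigma$, and every other level set $\Sigma_t:=f^{-1}(t)$ isotopic to $\Sigma$. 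Then I would isotope $S$ so that $f|_S$ is Morse with distinct critical values and the number of saddle tangencies is minimized. Standard innermost-disk surgeries, using incompressibility of $S$ and irreducibility of $M$, put $S$ in a position where every circle of $S\cap\Sigma_t$ is essential in $S$; since $S$ is connected of positive genus it cannot be contained in a compression body, so $S\cap\Sigma_t\neq\varnothing$ for every regular $t$; and if every component of $S\cap\Sigma_t$ were inessential in $\Sigma_t$, an innermost one would bound a disk in $\Sigma_t$ meeting $S$ only in its boundary, compressing $S$ --- so for each regular $t$ some circle of $S\cap\Sigma_t$ is essential in $\Sigma_t$ as well.

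Next I would analyse the two ends of the sweepout. For $t$ close to $-1$ the piece $S^-_t:=S\cap f^{-1}([-1,t])$ is a nonempty surface properly embedded in the compression body $f^{-1}([-1,t])\cong V$, all of whose boundary circles are essential in $S$, hence essential in $\Sigma_t$. Incompressibility of $S$ (with irreducibility of $M$ to discard trivial compressions) makes $S^-_t$ incompressible in $V$; since an incompressible, boundary-incompressible properly embedded surface in a handlebody is a disk, either $S^-_t$ already contains a meridian disk of $V$, or it is boundary-compressible, and iterated boundary-compressions (discarding trivial pieces) cut it into meridian disks whose boundaries are $V$-meridians built by banding components of $S\cap\Sigma_t$ together along arcs in $\Sigma_t$. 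In all cases one extracts a curve $c_V\in\mathcal D_V$ that is essential on $\Sigma\cong\Sigma_t$ and satisfies $i(c_V,\gamma^-)=0$ for some essential component $\gamma^-\subseteq S\cap\Sigma_t$. Symmetrically, for $t$ near $+1$ one obtains $c_W\in\mathcal D_W$ and an essential component $\gamma^+\subseteq S\cap\Sigma_t$ with $i(c_W,\gamma^+)=0$.

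Then I would read off a short path in $\mathcal C(\Sigma)$ from the surface $S$. Between consecutive critical values of $f|_S$ the curve system $S\cap\Sigma_t$ is constant up to isotopy, and crossing a single critical value changes it by a band move (maxima and minima only create or destroy inessential circles and may be ignored). Choosing, for a cofinal family of regular levels $t_0<t_1<\dots<t_k$, essential circles $\alpha_i\subseteq S\cap\Sigma_{t_i}$ with $\alpha_0=\gamma^-$, $\alpha_k=\gamma^+$ and consecutive $\alpha_i,\alpha_{i+1}$ disjoint --- possible because the band move relating levels $t_i$ and $t_{i+1}$ is supported in a controlled subsurface of $S$ --- the sequence $c_V,\alpha_0,\alpha_1,\dots,\alpha_k,c_W$ is a path in $\mathcal C(\Sigma)$ joining $\mathcal D_V$ to $\mathcal D_W$. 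Its length is at most two plus the number of saddles of $f|_S$ that actually alter the essential part of $S\cap\Sigma_t$, and the point is that this number is at most $-\chi(S)=2g-2$; hence $d_{\mathcal C(\Sigma)}(\mathcal D_V,\mathcal D_W)\leq 2g$.

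The main obstacle is the bookkeeping in the last step: one must show that the essential curves chosen on consecutive regular levels can be taken disjoint, and that only about $2g-2$ of the (a priori far more numerous) tangencies of $S$ with the sweepout genuinely change the essential intersection pattern. This is exactly where the minimal-position hypothesis on $S$ enters, through a careful case analysis of how maxima, minima, and saddles of $f|_S$ interact with essentiality of circles on the level surfaces; it is the mechanism that produces $-\chi(S)$ in the bound, and the same mechanism underlies the Bachman--Schleimer estimate of Theorem~\ref{basthm}. A secondary technical point is the end analysis of the second paragraph, where extracting the meridians $c_V,c_W$ disjoint from an essential level curve requires some care with the boundary-compression surgeries.
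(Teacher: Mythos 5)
The paper does not prove this statement at all --- it is quoted verbatim as Hartshorn's theorem and attributed to \cite{hartshornhee}, so there is no internal argument to compare against; what you have written is a sketch of Hartshorn's own sweepout strategy (close in spirit to the Bachman--Schleimer argument for Theorem \ref{basthm}). Judged on its own terms, the sketch has genuine gaps, and they sit exactly at the point that constitutes the actual content of the theorem. First, your normalization in the opening paragraph is impossible as stated: since $S$ is closed, $f|_S$ always has local maxima and minima, and near such a center the circles of $S\cap\Sigma_t$ bound small disks in $S$, so no isotopy puts $S$ in a position where \emph{every} circle of $S\cap\Sigma_t$ is essential in $S$ for every regular $t$. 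One can only arrange good position level by level (or discard inessential circles at each level), and managing how the ``essential part'' of $S\cap\Sigma_t$ persists across levels is precisely what the Rubinstein--Scharlemann-style labelling (levels mostly above / mostly below $S$) in Hartshorn's proof is for. Relatedly, your end analysis near the spines is off: if $S$ meets the spine $K_V$, then for $t$ near $-1$ the pieces of $S$ below $\Sigma_t$ are meridian disks whose boundaries are \emph{inessential in $S$}, contradicting the normalization you just imposed; extracting $c_V\in\mathcal D_V$ disjoint from an essential level curve needs the same labelling machinery, not just innermost-disk surgery.

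Second, the quantitative heart of the proof --- that at most $-\chi(S)=2g-2$ tangencies genuinely move the essential intersection pattern, and that essential curves on consecutive relevant levels can be chosen disjoint --- is asserted and explicitly deferred (``the main obstacle''), but it is not a bookkeeping afterthought: it \emph{is} the theorem. Note that the naive count goes the wrong way: $\#\text{saddles}=\#\text{centers}-\chi(S)\geq 2-\chi(S)=2g$, so one must argue that the saddles interacting with the birth and death of inessential circles do not change the essential class, and that each remaining (``essential'') saddle moves the chosen curve by distance at most one in $\mathcal C(\Sigma)$. Without that case analysis the bound $d\leq 2g$ does not follow from anything you have written. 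So the proposal correctly identifies the standard framework but leaves the decisive lemma unproved; to complete it you would need either Hartshorn's labelling argument or the essential-saddle count of Bachman--Schleimer.
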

The second is J. Maher's \cite{maher2010linear} result that the translation distance of a random automorphism $\phi$ increases linearly with the combinatorial length of $\phi.$ Put together, these imply:
\begin{theorem}
\label{nonhakthm}
For a random gluing map $\phi$ of combinatorial length $n,$ $N_\phi^3$ almost surely has no incompressible surfaces of genus smaller $c n,$ where the constant $c$ depends on the genus and the generating set.
\end{theorem}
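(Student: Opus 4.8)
\emph{Proof proposal.} The plan is to argue by contraposition, assembling the two cited black boxes. Recall that $\phi$ produces $N_\phi^3 = H_1 \cup_\phi H_2$ from two genus-$g$ handlebodies, and that this presentation is a genus-$g$ Heegaard splitting whose Hempel (translation) distance $d(N_\phi)$ is the curve-complex distance in $\mathcal{C}(\partial H_1)$ between the meridian disk set $\mathcal{D}(H_1)$ and its image $\phi(\mathcal{D}(H_2))$. First I would record that $d(N_\phi)$ grows at least linearly in $n$ with probability tending to $1$: this is Maher's theorem \cite{maher2010linear} in its random–Heegaard–splitting form, or equivalently it follows from the linear growth of the curve-complex translation length of the random monodromy together with the fact that a fixed reference curve is dragged a distance $\geq c' n$ by $\phi$ while the two disk sets stay within bounded projection distance of that reference data. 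Fix such a $c' > 0$ and a failure probability $\varepsilon_n \to 0$ so that $d(N_\phi) \geq c' n$ off an event of probability $\varepsilon_n$.

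Next, suppose toward a contradiction that, on the good event, $N_\phi^3$ were Haken and contained a two-sided incompressible surface $F$ of genus $h$. Hartshorn's theorem then forces \emph{every} Heegaard splitting of $N_\phi^3$ — in particular the genus-$g$ one above — to have translation distance at most $2h$; hence $2h \geq d(N_\phi) \geq c' n$, i.e. $h \geq (c'/2)\,n$. Setting $c = c'/2$, which depends only on the genus $g$ and the chosen generating set through Maher's constant, we conclude that on the good event $N_\phi^3$ has no incompressible surface of genus less than $cn$. Since $\varepsilon_n \to 0$, this is exactly the asserted almost-sure statement.

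The main obstacle lies only in the bookkeeping of the first step: Hartshorn's bound is literally about the Hempel distance of a splitting, so one must make sure that the quantity known to grow linearly — Maher's ``translation distance of a random automorphism'' — is genuinely (coarsely) the Hempel distance of the associated splitting. This holds because the meridian disk sets, though of infinite diameter in $\mathcal{C}(\partial H)$, are uniformly quasiconvex and the random monodromy pushes one of them a distance $\gtrsim n$ from the other; alternatively, one simply invokes Maher's direct result on random Heegaard splittings. A secondary technical point is to reduce to a connected two-sided incompressible surface (pass to a component, double a one-sided surface if necessary, and note that Hartshorn's bound is stated in terms of genus so disjoint unions do not help), and to verify that all constants depend only on $g$ and the generating set, not on $\phi$.
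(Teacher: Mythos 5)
Your proposal is correct and follows essentially the same route as the paper: combine Hartshorn's bound (an incompressible surface of genus $h$ forces every Heegaard splitting to have distance at most $2h$) with Maher's theorem that the distance of a random splitting grows linearly in $n$, yielding the linear lower bound on the genus of any incompressible surface off an event of vanishing probability. Your extra care in identifying Maher's translation-length statement with the Hempel distance of the splitting (via the disk sets) is a reasonable tightening of a point the paper passes over silently, but it does not change the argument.
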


Theorem \ref{nonhakthm} invites the obvious conjecture:
\begin{conjecture}
\label{nonhakconj}
The manifold $N_\phi^3,$ as in the statement of Theorem \ref{nonhakthm}, is almost surely not Haken.
\end{conjecture}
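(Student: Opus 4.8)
The plan is to derive a contradiction from the hypothesis that $N_\phi^3$ is Haken for a non‑negligible set of long words $\phi$, by pitting a \emph{lower} bound on the genus of any incompressible surface (from the linear growth of the Heegaard distance) against an \emph{upper} bound (from the bounded complexity of the Namazi--Souto model of $N_\phi^3$). First I would reduce the problem: since a genus‑$g$ Heegaard splitting of curve‑complex distance $\geq 3$ yields an irreducible, atoroidal, non–Seifert‑fibered $3$‑manifold, hence (by geometrization) a hyperbolic one, and since by Maher's theorem \cite{maher2010linear} the distance $d(\phi)$ of a random word of length $n$ satisfies $d(\phi) \geq c_0 n$ with probability tending to $1$ (this is the input to Theorem \ref{nonhakthm}), a generic $N_\phi^3$ is hyperbolic, in particular irreducible and atoroidal. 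Therefore $N_\phi^3$ is Haken if and only if it contains a closed \emph{embedded} incompressible surface $S$ of genus $h \geq 2$, and Hartshorn's theorem forces $d(\phi) \leq 2h$, i.e. $h \geq c_0 n/2$. This already removes every candidate surface of bounded genus; the whole difficulty is concentrated in surfaces whose genus grows with $n$.

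To attack the large‑genus case I would use the geometric model of Namazi--Souto \cite{namazisoutoheegard} already invoked above: for a random $\phi$ of length $n$, $N_\phi^3$ is bi-Lipschitz to a model manifold built from two fixed handlebody ``caps'' glued along a long core region of combinatorial length $\Theta(n)$ that behaves like an interval bundle over the gluing surface and is organized along a geodesic in the curve (or pants) complex. In particular $N_\phi^3$ admits a cellulation with $O(n)$ cells, and the sweep‑out by Heegaard surfaces has only $O(n)$ critical levels. I would put $S$ in a position that is simultaneously least‑area (or pleated) and normal/almost normal with respect to this cellulation, and argue that $S$ must ``run along'' the core: across each unit of core length only a bounded amount of $S$ can appear before one finds either a compressing disk for $S$ or an essential annulus inside a product region, both contradicting incompressibility. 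Summing over the $\Theta(n)$ units bounds the area of $S$ — hence, by Gauss--Bonnet together with the bi-Lipschitz comparison, its genus — linearly in $n$ with an explicit constant; if that constant can be driven below $c_0/2$, the contradiction with $h \geq c_0 n/2$ follows. A combinatorial variant would replace the area estimate by a normal‑surface count: an incompressible normal surface meeting the adapted cellulation efficiently should have weight, and therefore genus, growing \emph{linearly} rather than exponentially in $n$, with the constant read off from the model.

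The main obstacle is exactly this quantitative comparison. Every off‑the‑shelf bound on the genus of an incompressible surface in terms of triangulation size, complexity, or volume is exponential (Haken / Hass--Lagarias--Pippenger), whereas Hartshorn's constraint is linear, so a generic non‑Haken theorem genuinely requires new input: a linear ``isoperimetric'' control of incompressible surfaces dictated by the product structure of the Namazi--Souto core rather than by a global scale. A further complication is that $N_\phi^3$ has injectivity radius tending to $0$, so monotonicity‑type lower bounds on the area of a minimal surface degenerate, and all the bookkeeping must be carried out along the thin core, carefully treating its short geodesics and Margulis tubes. It is entirely possible that the truth is delicate right at the threshold $h \approx d(\phi)/2$, which is why the statement is only conjectural; a natural intermediate target would be to prove that a random $N_\phi^3$ contains no incompressible surface of genus $o(n)$, which would already be new and appears to be within reach of the strategy above.
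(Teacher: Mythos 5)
The statement you are trying to prove is stated in the paper as a \emph{conjecture} (Conjecture \ref{nonhakconj}); the paper gives no proof, and the only result actually established there is Theorem \ref{nonhakthm}: combining Hartshorn's bound $d(\phi)\le 2h$ with Maher's linear growth of the Heegaard (curve-complex) distance \cite{maher2010linear}, a random $N_\phi^3$ has no incompressible surface of genus below $cn$. Your opening paragraph reproduces exactly that argument, and your proposed ``intermediate target'' (no incompressible surface of genus $o(n)$) is therefore not new --- it is precisely Theorem \ref{nonhakthm}. Everything beyond that in your proposal is a strategy rather than a proof, and the gap is the one you name yourself: you need a \emph{linear-in-$n$} upper bound, with an explicit constant smaller than $c_0/2$, on the genus of any incompressible surface in the Namazi--Souto model of $N_\phi^3$, and no such bound exists. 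All known controls of incompressible-surface genus by triangulation size or volume are exponential, and the ``bounded contribution per unit of core length'' lemma you posit (finding a compressing disk or essential annulus whenever the surface meets a product block too much) is not established; incompressible surfaces can wind through product regions in ways not captured by a per-block count --- compare Remark \ref{hatchflorem} and Theorem \ref{ptorcount}, where punctured-torus bundles always carry non-fiber essential surfaces and their number grows \emph{exponentially} in the monodromy length, despite the Bachman--Schleimer linear lower bound on their complexity.

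There is also a structural problem with the intended contradiction even granting the missing lemma: Hartshorn plus Maher gives only a \emph{lower} bound $h\ge c_0 n/2$, while your model argument would give an upper bound $h\le C n$ for some constant $C$ determined by the geometry of the Namazi--Souto core; nothing forces $C<c_0/2$, since the two constants come from unrelated sources (coarse curve-complex drift versus bi-Lipschitz model geometry), so the two linear bounds can coexist without contradiction. Driving $C$ below $c_0/2$ would itself require new quantitative input, which is exactly why the statement remains open in the paper. In short: your reduction to the large-genus case is correct and matches what the paper proves, but the decisive step excluding surfaces of genus comparable to $n$ is missing, and the proposal does not constitute a proof of the conjecture.
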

\subsection{Geometric quantities}
\label{heegeom}
The central geometric quantity is clearly the hyperbolic volume. Dunfield and Thurston conjecture that volume grows \emph{linearly} in the length of the gluing map. The results of H. Namazi and J. Souto \cite{namazisoutoheegard} tell us since a random gluing map has ``blocks'' of the form $f^k$ for some (actually, every possible short enough) $f,$ and each such block looks like a cover of the mapping torus of $f,$ we have the following theorem, which resolves the conjecture of Dunfield-Thurston:
\begin{theorem}
\label{geomheeg}
The volume of $N_\phi^3,$ for $\phi$ a random word of length $n$ grows linearly in $n.$ Furthermore, the Cheeger constant of $N_\phi^3$ goes to zero, as does the bottom eigenvalue of the Laplacian, as does the injectivity radius. The rates of change are the same as for fibered surfaces.
\end{theorem}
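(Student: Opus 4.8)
The plan is to transport everything to a Namazi--Souto bi-Lipschitz model of $N_\phi^3$ and then read off each geometric quantity from the model, in exact parallel with Section~\ref{volsec}; the remark in Section~\ref{volumeeasy} is just the reverse direction of the same principle. First I would reduce to the generic case. By Maher's work (\cite{maher2011generic,maher2010linear}), a random word $\phi$ of length $n$ in a nonelementary subgroup of $\mcg_g$ is pseudo-Anosov with probability tending to $1$ exponentially fast, the curve-complex distance between the disk set $\mathcal{D}(H_1)$ and $\phi\,\mathcal{D}(H_2)$ grows linearly in $n$, and in particular $N_\phi^3$ is hyperbolic and $\phi$ has arbitrarily large ``complexity'' once $n$ is large. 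Moreover, by the Erd\H{o}s--R\'enyi theorem \cite{erdosrenyiruns}, $\phi$ contains a run $c^{d}$ with $d\asymp\log n$ situated well inside the word, where $c$ is any prescribed Dehn-twist generator. For such $\phi$, the Namazi--Souto construction (\cite{namazisoutoheegard}) yields a metric on $N_\phi^3$ bi-Lipschitz to the hyperbolic metric with constant independent of $n$, assembled from two bounded ``handlebody caps'' and a ``middle'' region modelled on $S\times I$, cut into uniformly bi-Lipschitz blocks indexed by a resolution of a curve-complex geodesic joining the two disk sets.

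For the volume the upper bound is soft: realizing $\phi$ as a product of $n$ Dehn-twist generators and gluing a fixed handlebody triangulation to its mirror through $\phi$ produces a semi-simplicial triangulation of $N_\phi^3$ with $O(n)$ tetrahedra, hence Gromov norm, and so hyperbolic volume, at most $Cn$ (this is the easy half of Corollary~\ref{matcor}). For the lower bound, the number of blocks of the model's middle region is bounded below by the curve-complex distance between $\mathcal{D}(H_1)$ and $\phi\,\mathcal{D}(H_2)$, which is $\asymp n$ by Maher, and each block contributes a definite amount of volume (thick blocks have volume bounded below, and a Margulis-tube block has volume bounded below by a universal constant as well), so $\mathrm{vol}(N_\phi^3)\geq cn$. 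This is the same mechanism as Brock's theorem and Theorem~\ref{volgrowth} in the fibered case, which is why the linear rate is the same.

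The remaining invariants fall out of the volume bounds. Since $N_\phi^3$ has Heegaard genus at most $g$, Lackenby's estimate (the Heegaard-genus input underlying Theorem~\ref{lackthm}) bounds its Cheeger constant by $C_g/\mathrm{vol}(N_\phi^3)\leq C_g/(cn)\to 0$; Buser's inequality $\lambda_1\leq 4h+10h^{2}$, with $h=h(N_\phi^3)$ the Cheeger constant, then gives $\lambda_1(N_\phi^3)\to 0$, and combining it with the lower bound used in Corollary~\ref{lambdacor} and the two-sided volume estimate yields $C/n^{2}<\lambda_1(N_\phi^3)<C'/n$, the same as the fibered rate. Finally, the piece of the model carrying the run $c^{d}$ is bi-Lipschitz to the corresponding piece of the $d$-fold cyclic cover of the Seifert-fibered mapping torus of the twist $c$, which by Minsky's computation (the higher-genus analogue of Theorem~\ref{minskythm}) contains an essential curve of length $\asymp 1/d^{2}$; hence the injectivity radius of $N_\phi^3$ is at most $c'/\log^{2}n\to 0$, the same upper bound as for fibered manifolds in Section~\ref{injrad}.

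The hard part is entirely in the first paragraph: obtaining a bi-Lipschitz model, with a constant independent of $n$, for a genuinely random gluing $\phi$ rather than merely for a fixed power $f^{k}$. Getting a \emph{linear} (not $n/\log n$) lower bound on volume forces one to use the whole path efficiently, so the naive ``disjoint blocks of the form $f^{k}$'' argument is not enough; one needs the full Minsky-style apparatus of hierarchies and ending laminations, together with the control of the two disk sets that Namazi--Souto supply at the ends, plus a check that a random word meets the combinatorial hypotheses of that machinery and that the short curves produced by twist runs really survive into $N_\phi^3$ rather than being absorbed by a handlebody cap. Once this geometric input is available, the deductions for volume, Cheeger constant, $\lambda_1$, and injectivity radius are essentially the same bookkeeping as in the fibered case.
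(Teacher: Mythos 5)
Your endgame is exactly the paper's: for the upper bound a triangulation of $N_\phi^3$ with $O(n)$ tetrahedra, for the Cheeger constant and $\lambda_1$ the Lackenby-type Heegaard-genus bound plus Buser together with the sandwich $C/n^2<\lambda_1<C'/n$ of Corollary \ref{lambdacor}, and for the injectivity radius the Erd\H{o}s--R\'enyi twist-run argument of Section \ref{injrad}; the paper's proof of Theorem \ref{geomheeg} simply refers all of this to \cite{namazisoutoheegard} and to ``the arguments for fibered manifolds,'' i.e.\ to Sections \ref{volumehard}--\ref{lapspec}. Where you genuinely diverge is the linear lower bound for the volume. The paper does not go through Maher's linear progress of the splitting distance and a hierarchy-indexed block count; instead it observes that a random word of length $n$ contains a \emph{linear} number of disjoint occurrences of a fixed block $f^{k}$ (any $f$ of bounded combinatorial length, $k$ fixed and just large enough for Namazi--Souto), cites Feller \cite{FellerV1} for the sharp pattern-count limit theorem, and asserts that each occurrence looks like a piece of a cyclic cover of the mapping torus of $f$, hence carries a definite amount of volume. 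In particular your claim that the ``disjoint blocks of the form $f^{k}$'' argument can only give $n/\log n$ is miscalibrated: since $k$ is held fixed rather than growing, the number of disjoint occurrences is already $\asymp n$, so that route does yield a linear bound; its real burden is the one you correctly flag in your last paragraph, namely that Namazi--Souto as stated concerns the manifold glued by the power $f^{k}$ itself, and one must argue that each subword occurrence persists as a region of definite geometry in $N_\phi^3$ --- a point the paper leaves implicit. Your alternative (Maher's linear growth of the distance between the disk sets \cite{maher2010linear} plus a Brock-style count of blocks in a global bi-Lipschitz model) has the merit of not singling out special subwords and of matching the mechanism of Theorem \ref{volgrowth} in the fibered case, but it requires a Heegaard analogue of Brock's volume--distance theorem and the full model-manifold machinery, which is substantially heavier than anything the paper invokes; the paper's route is lighter but, as written, is a sketch resting on the same unproven local persistence of Namazi--Souto blocks that you identify as the hard part.
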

\begin{proof}
The statement on the volume follows from the discussion above, and the trivial observation that the word giving $\phi$ will have a linear number of  blocks of the form $f^k$ for \emph{any} $f$ of combinatorial length $O(1).$ Indeed, one can (but we don't need to) show sharp limit theorems -- see \cite{FellerV1}[XIII.7,eq.~(7.8)].

The statements on the other geometric quantities follow from the results of \cite{namazisoutoheegard} and the arguments for fibered manifolds.
\end{proof}
\begin{remark}The astute reader will note that we only show a linear \emph{lower} bound. However, an upper bound follows from a standard argument which shows that the number of simplices in a triangulation of $N_\phi^3$ grows at most linearly in the length of $\phi,$ hence so does the hyperbolic volume.
\end{remark}
\subsection{Experimental Results}
The proof of Theorem \ref{geomheeg} only shows that volume grows ``coarsely linearly'', but of course it is natural to ask what the truth is. The author has conducted extensive experiments, and below are two sample graphs (for genus equal $2$ and $5.$) The experiment consisted of computing one thousand random splitting of each of the lengths consider (roughly from $200$ to $1000$ in multiples of $10$), and plotting the mean. The reader can see that the graphs are about as linear as one could wish for, which leads us to strengthen the Dunfield-Thurston conjecture:
\begin{conjecture}
\label{DTvol2}
Volume of random Heegaard splittings of combinatorial length $n$ satisfy a central limit theorem: there exist positive constants $\beta$ and $\sigma$ such that 
\[
\frac{1}{\sqrt{n}}(V(N_\phi^3) - n \beta)) \rightarrow N(0, \sigma^2).
\]
\end{conjecture}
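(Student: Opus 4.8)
The plan is to reduce the problem to a central limit theorem for an \emph{almost additive} functional of the defining word, and then to an $m$-dependent (equivalently, martingale) central limit theorem for a local functional of an i.i.d.\ sequence. Write the random gluing map as $\phi = g_{i_1}\cdots g_{i_n}$ with $(i_k)$ i.i.d.\ uniform on the (symmetric) generating set. The first step is to upgrade the Namazi--Souto model of \cite{namazisoutoheegard} from a \emph{bi-Lipschitz} comparison to an \emph{additive} one: one wants to express $V(N_\phi^3) = \sum_{j} v_j + E_n$, where $v_j = v(g_{i_{j-w}},\dots,g_{i_{j+w}})$ is a ``local volume'' depending only on a window of bounded radius $w$ about position $j$ in the word, and the error $E_n$ is $o(\sqrt n)$ in probability (ideally almost surely). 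Heuristically this should hold because the thick part of $N_\phi^3$ near the $j$-th model block is, by Minsky-type a priori bounds and the contraction of Weil--Petersson and pants geodesics, geometrically close --- with error decaying in the distance to the ends of the relevant subword --- to a limiting geometry determined only by the nearby letters; far-apart blocks interact only through exponentially small corrections.

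Granting such a decomposition, the sequence $(v_j)$ is a fixed bounded measurable function of a window of width $2w+1$ of an i.i.d.\ sequence, hence stationary and $(2w+1)$-dependent. The classical Hoeffding--Robbins $m$-dependent central limit theorem (or, more robustly, Gordin's martingale--coboundary decomposition) then gives
\[
\frac{1}{\sqrt n}\Bigl(\sum_{j} v_j - n\beta\Bigr) \;\longrightarrow\; N(0,\sigma^2),
\qquad \beta = \mathbb{E}[v_j],
\]
and adding back $E_n/\sqrt n \to 0$ yields the claimed limit for $V(N_\phi^3)$. The same bookkeeping gives the almost sure linear law $V(N_\phi^3)/n \to \beta$ (the analogue of Conjecture \ref{volconj} in this setting), which the present linear bounds do not by themselves provide, since hyperbolic volume is not subadditive (see Listing \ref{schleimerlst}) and hence Kingman's theorem does not apply. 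Positivity of $\sigma^2$ is the assertion that $v$ is not cohomologous to a constant for the shift; this can be checked by exhibiting two windows whose insertion changes the volume by genuinely different amounts --- precisely the mechanism behind the non-subadditivity examples --- which rules out $v$ being a coboundary.

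The main obstacle is the first step: converting bi-Lipschitz control into additive, window-local control with an $o(\sqrt n)$ error. The Namazi--Souto and Minsky machinery is \emph{qualitative} about how the geometry depends on the combinatorics of the word, whereas what is needed here is a quantitative ``exponential decay of correlations'' statement: that the contribution of a bounded block to the total volume is determined, up to exponentially small error, by a bounded neighbourhood of that block in the word. One route is to replace $V(N_\phi^3)$ by a manifestly more local surrogate (a suitably renormalized sum of ``block volumes'' in a fixed bi-Lipschitz model, or an integral of a density supported in the thick part) and then show a posteriori that the surrogate differs from $V(N_\phi^3)$ by $o(\sqrt n)$; another is to import a central limit theorem for a translation length or Busemann cocycle of $\mcg(g)$ acting on a Gromov-hyperbolic complex (in the spirit of recent work of Benoist--Quint, Horbez, and Mathieu--Sisto) and argue that $V(N_\phi^3)$ tracks such a cocycle with additive error $o(\sqrt n)$. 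Both routes run into the same gap, since no hyperbolic-complex distance is currently known to agree with the Heegaard volume to within $o(\sqrt n)$. Absent such an input, the conjecture should be regarded as requiring a genuinely new geometric estimate, with the $m$-dependent central limit theorem serving only as the (routine) back end.
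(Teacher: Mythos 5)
You should first note that the statement labeled \ref{DTvol2} is a \emph{conjecture} in the paper: the text offers no proof, only a coarsely linear growth statement (Theorem \ref{geomheeg}, via the Namazi--Souto model and the occurrence of long blocks $f^k$ in a random word) together with experimental evidence (the plots of means, variances, and histograms of volumes). So there is no ``paper proof'' to match your argument against, and your proposal does not close the gap either --- as you yourself say in the final paragraph. The back end of your plan (a stationary, window-local functional of an i.i.d.\ sequence plus the Hoeffding--Robbins $m$-dependent CLT, or Gordin's martingale--coboundary method for exponentially decaying dependence) is indeed routine. But the front end --- the decomposition $V(N_\phi^3)=\sum_j v_j + E_n$ with $v_j$ depending only on a bounded window of letters and $E_n=o(\sqrt n)$ --- is precisely the missing geometric content. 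The available inputs are only coarse: Brock-type and Namazi--Souto-type comparisons control volume up to multiplicative constants (and the model-manifold machinery is bi-Lipschitz, with constants that do not obviously improve to additive, exponentially localized error terms), and the paper's own Listing \ref{schleimerlst} shows that volume is not even subadditive, so no soft subadditivity or cocycle trick substitutes for the quantitative ``decay of correlations'' you need. The same objection applies to your alternative route through CLTs for displacement cocycles on hyperbolic complexes (Benoist--Quint, Mathieu--Sisto): those distances are only known to agree with volume up to bounded multiplicative distortion, not up to additive $o(\sqrt n)$, which is exactly what a CLT transfer requires. In short, your proposal is a sensible reduction scheme, not a proof, and the conjecture remains open after it.

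A secondary point: your argument for $\sigma^2>0$ is too quick. Exhibiting two windows whose insertion changes the volume by different amounts does not by itself rule out $v$ being cohomologous to a constant; a coboundary can produce arbitrarily different local increments that telescope away. The standard way to certify non-degeneracy for a local function of a shift is to produce two periodic sequences (here, two periodic words, i.e.\ splittings or mapping tori built from repeated blocks) whose per-letter volume averages differ; under your hypothetical decomposition this would contradict the coboundary equation and force $\sigma^2>0$. That fix is easy once the decomposition exists, but it again presupposes the unproven localization estimate.
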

To convince the reader that the conjecture is plausible we include also the graph of the variances for each length, and the histograms of the empirical distributions.
\begin{figure}
\centering
\includegraphics[width=0.5\textwidth]{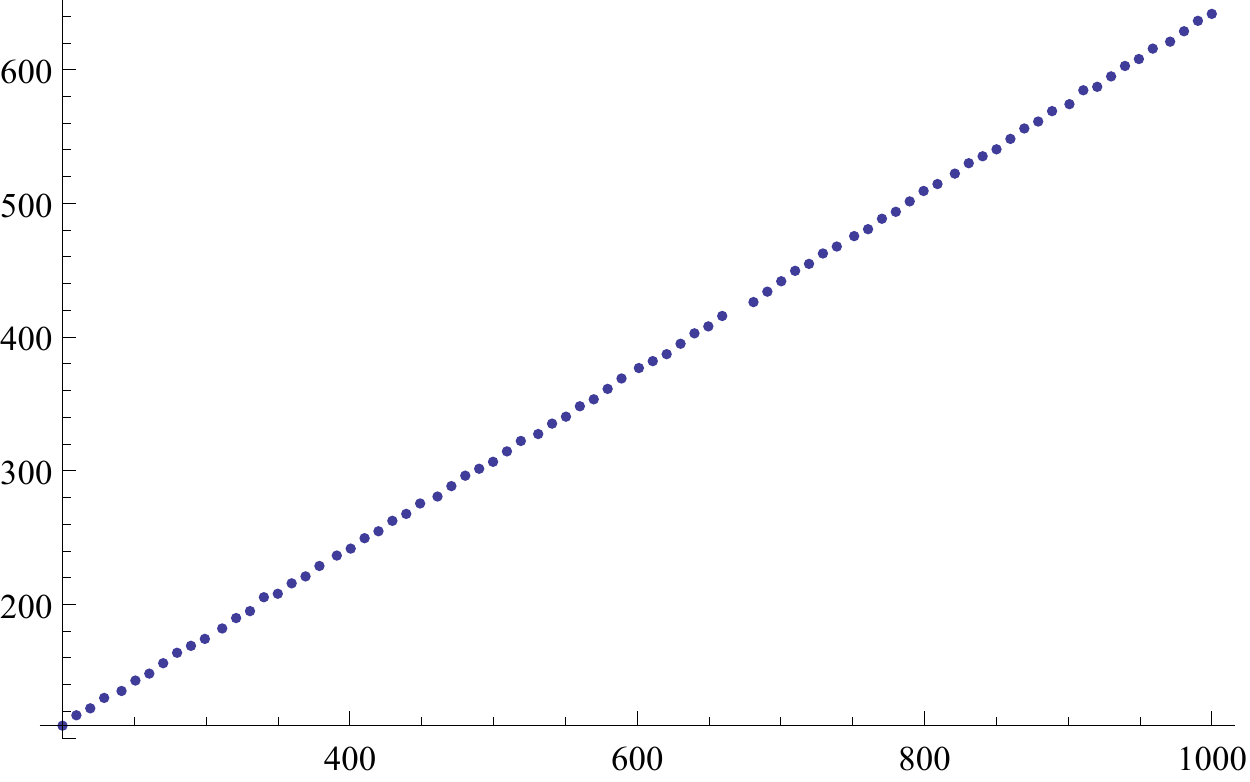}
\caption{\label{fig:graphg2} Mean values of volume for random splittings of genus 2 \\(slope $0.66$)}
\end{figure}
\begin{figure}
\centering
\includegraphics[width=0.5\textwidth]{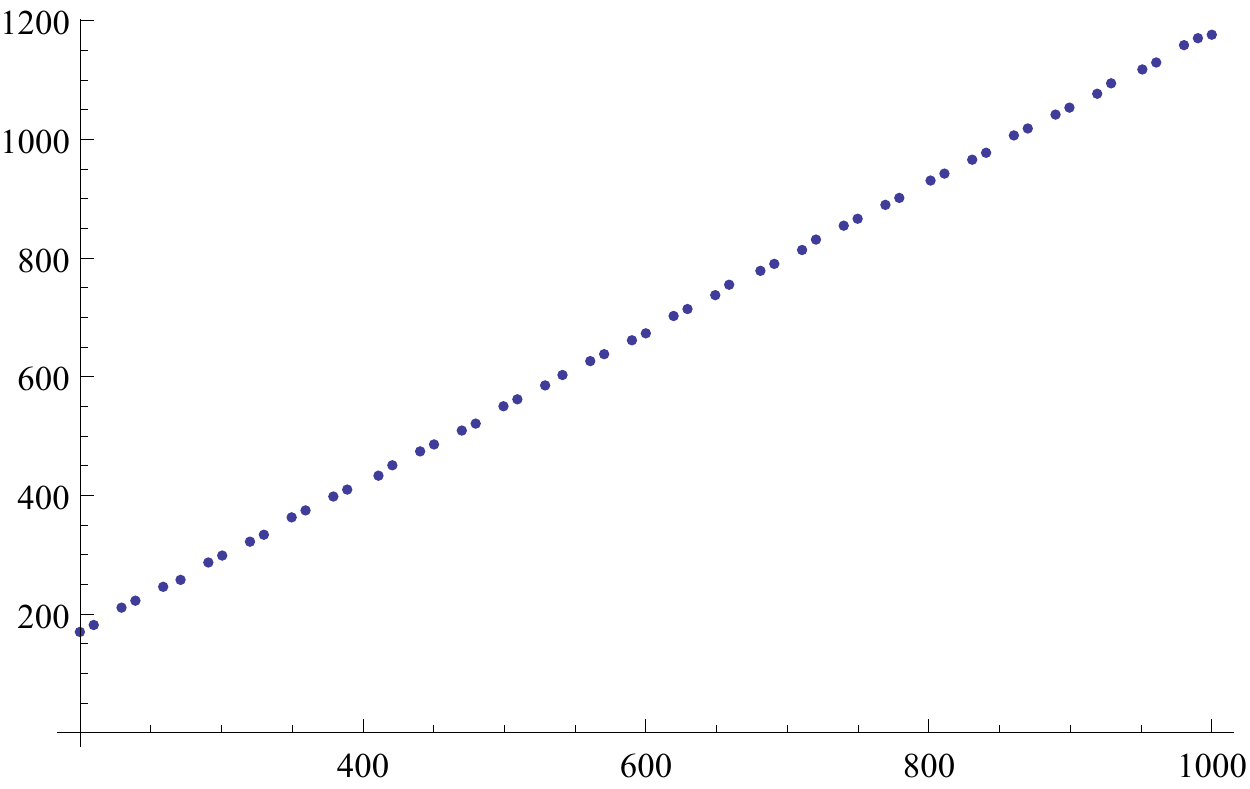}
\caption{\label{fig:graphg5} Mean values of volume for random splittings of genus 5 \\(slope $1.26$)}
\end{figure}
\begin{figure}
\centering
\includegraphics[width=0.5\textwidth]{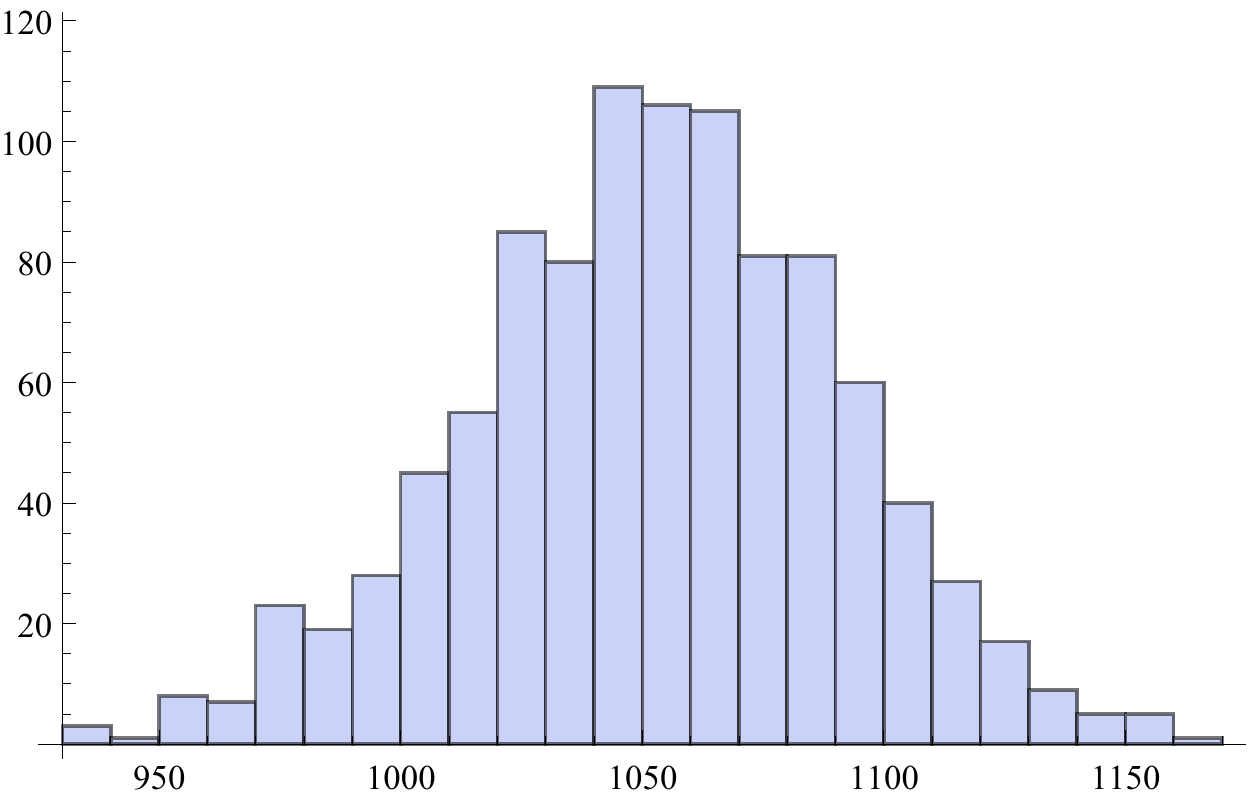}
\caption{\label{fig:histo500_900} Distribution of volume for random splittings of genus 5 and combinatorial length $900.$}
\end{figure}
\begin{figure}
\centering
\includegraphics[width=0.5\textwidth]{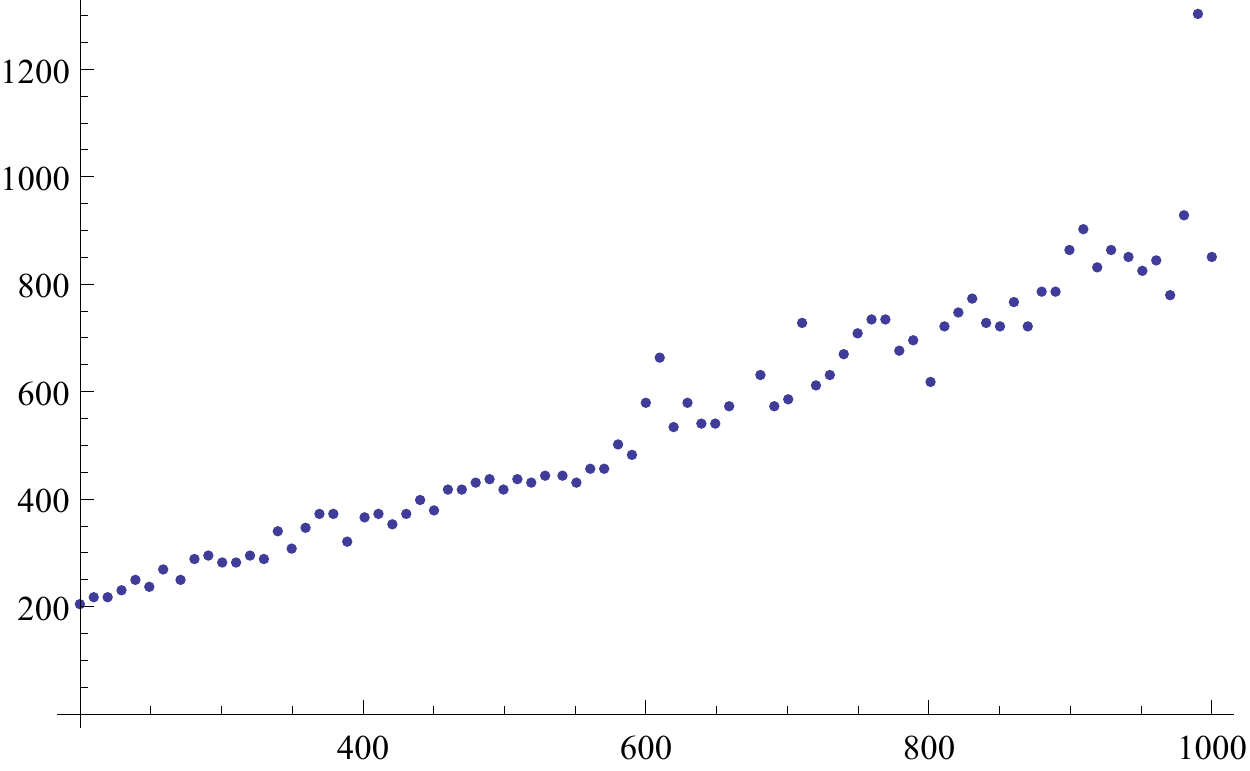}
\caption{\label{fig:graphvar2} Variance of volume for random splittings of genus 2}
\end{figure}
\begin{figure}
\centering
\includegraphics[width=0.5\textwidth]{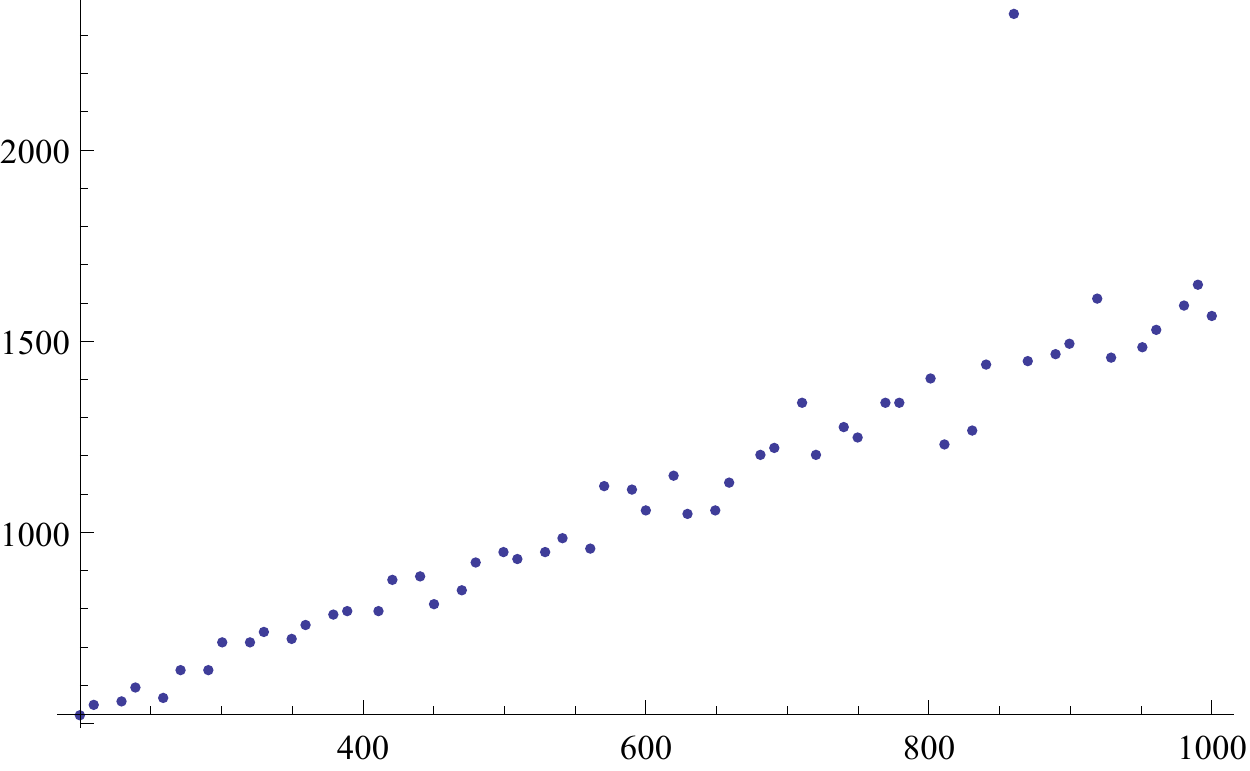}
\caption{\label{fig:graphvar5} Variance of volume for random splittings of genus 5}
\end{figure}
\newpage
\section{In conclusion}
It is fair to say that we now have good (though far from perfect) understanding of both random mapping tori and random Heegaard splittings. The first question is: what do we do with it? In combinatorics, the probabilistic method uses the understanding of random objects to use them as building blocks to construct particularly interesting examples. We have not yet done this in the setting of $3$-dimensional manifolds, but it is not inconceivable that this would be a promising way to go about proving the virtual fibering theorem, which would be, quite likely, more constructive than the Agol-Wise machine (this was almost certainly the intent of Dunfield and Thurston, but new ideas are needed).

It is also clear that at the moment the most natural model of random manifolds (\emph{via} random triangulations) is completely out of reach -- it is not even clear what tools are needed. There has been some very nice work on random complexes and hypergraphs (see \cite{linial2006homological,babson2011fundamental,aronshtam2013collapsibility,hoffman2013threshold},) but the methods clearly do not extend to the manifold setting (even to the setting of two-dimensional manifolds, where the theory of random triangulations is quite well-developed).
\bibliographystyle{plain}
\bibliography{msri}
\end{document}